\newcommand{\email}[1]{\href{mailto:#1}{\nolinkurl{#1}}}
\definecolor{labelkey}{rgb}{0,0.08,0.45}
\definecolor{refkey}{rgb}{0,0.6,0.0}
\definecolor{Brown}{rgb}{0.45,0.0,0.05}
\definecolor{dgreen}{rgb}{0.00,0.59,0.00}
\definecolor{dblue}{rgb}{0,0.08,0.75}
\renewcommand{\leq}{\ensuremath{\leqslant}}
\renewcommand{\geq}{\ensuremath{\geqslant}}
\newcommand{\Frac}[2]{\displaystyle{\frac{#1}{#2}}} 
\newcommand{\scal}[2]{{\left\langle{{#1}\mid{#2}}\right\rangle}}
\newcommand{\menge}[2]{\big\{{#1}~\big |~{#2}\big\}} 
\newcommand{\HHH}{{\ensuremath{\boldsymbol{\mathcal H}}}}
\newcommand{\XXX}{{\ensuremath{\boldsymbol{\mathcal X}}}}
\newcommand{\YYY}{{\ensuremath{\boldsymbol{\mathcal Y}}}}
\newcommand{\HH}{\ensuremath{{\mathcal H}}}
\newcommand{\moyo}[2]{\ensuremath{\sideset{^{#2}}{}%
{\operatorname{}}\!\!#1}}
\newcommand{\AL}{\ensuremath{\EuScript A}}
\newcommand{\GG}{\ensuremath{{\mathcal G}}}
\newcommand{\XX}{\ensuremath{\mathcal{X}}}
\newcommand{\YY}{\ensuremath{\mathcal{Y}}}
\newcommand{\Sum}{\ensuremath{\displaystyle\sum}}
\newcommand{\Prod}{\ensuremath{\displaystyle\prod}}
\newcommand{\emp}{\ensuremath{{\varnothing}}}
\newcommand{\Id}{\ensuremath{\operatorname{Id}}}
\newcommand{\ID}{{\boldsymbol{\Id}\,}}
\newcommand{\cart}{\ensuremath{\raisebox{-0.5mm}{\mbox{\LARGE{$\times$}}}}}
\newcommand{\RR}{\ensuremath{\mathbb{R}}}
\newcommand{\RP}{\ensuremath{\left[0,+\infty\right[}}
\newcommand{\RM}{\ensuremath{\left]-\infty,0\right]}}
\newcommand{\BL}{\ensuremath{\EuScript B}\,}
\newcommand{\RPP}{\ensuremath{\left]0,+\infty\right[}}
\newcommand{\RX}{\ensuremath{\left]-\infty,+\infty\right]}}
\newcommand{\NN}{\ensuremath{\mathbb N}}
\newcommand{\bK}{\ensuremath{\mathbb K}}
\newcommand{\intdom}{\ensuremath{\text{int\,dom}\,}}
\newcommand{\weakly}{\ensuremath{\:\rightharpoonup\:}}
\newcommand{\exi}{\ensuremath{\exists\,}}
\newcommand{\ran}{\ensuremath{\text{\rm ran}\,}}
\newcommand{\cran}{\ensuremath{\overline{\text{\rm ran}}\,}}
\newcommand{\zer}{\ensuremath{\text{\rm zer}\,}}
\newcommand{\pinf}{\ensuremath{{+\infty}}}
\newcommand{\minf}{\ensuremath{{-\infty}}}
\newcommand{\dom}{\ensuremath{\text{\rm dom}\,}}
\newcommand{\prox}{\ensuremath{\text{\rm prox}}}
\newcommand{\Fix}{\ensuremath{\text{\rm Fix}\,}}
\newcommand{\sign}{\ensuremath{\text{\rm sign}}}
\newcommand{\conv}{\ensuremath{\text{\rm conv}\,}}
\newcommand{\zeroun}{\ensuremath{\left]0,1\right[}}   
\newcommand{\rzeroun}{\ensuremath{\left]0,1\right]}}   
\newtheorem{theorem}{Theorem}[section]
\newtheorem{lemma}[theorem]{Lemma}
\newtheorem{proposition}[theorem]{Proposition}
\theoremstyle{plain}{\theorembodyfont{\rmfamily}%
\newtheorem{model}[theorem]{Model}}
\theoremstyle{plain}{\theorembodyfont{\rmfamily}%
\newtheorem{notation}[theorem]{Notation}}
\theoremstyle{plain}{\theorembodyfont{\rmfamily}%
\newtheorem{condition}[theorem]{Condition}}
\theoremstyle{plain}{\theorembodyfont{\rmfamily}%
\newtheorem{assumption}[theorem]{Assumption}}
\theoremstyle{plain}{\theorembodyfont{\rmfamily}%
}
\theoremstyle{plain}{\theorembodyfont{\rmfamily}%
}
\theoremstyle{plain}{\theorembodyfont{\rmfamily}%
\newtheorem{example}[theorem]{Example}}
\theoremstyle{plain}{\theorembodyfont{\rmfamily}%
\newtheorem{remark}[theorem]{Remark}}
\theoremstyle{plain}{\theorembodyfont{\rmfamily}%
\newtheorem{definition}[theorem]{Definition}}
\theoremstyle{plain}{\theorembodyfont{\rmfamily}%
}
\numberwithin{equation}{section}
\begin{document}
\title{\sffamily \vskip -9mm 
Deep Neural Network Structures Solving Variational 
Inequalities\thanks{Contact author: 
P. L. Combettes, \email{plc@math.ncsu.edu}, phone: +1 919 515-2671. 
The work of P. L. Combettes was supported by the 
National Science Foundation under grant CCF-1715671. 
The work of J.-C. Pesquet was supported by Institut 
Universitaire de France.}}
\author{Patrick L. Combettes$^1$ and 
Jean-Christophe Pesquet$^2$
\\[4mm]
\small
\small $\!^1$North Carolina State University,
Department of Mathematics, Raleigh, NC 27695-8205, USA\\
\small\email{plc@math.ncsu.edu}\\[2mm]
\small $\!^2$CentraleSup\'elec, Inria, Universit\'e Paris-Saclay,
Center for Visual Computing, 91190 Gif sur Yvette, France\\
\small\email{jean-christophe@pesquet.eu}
}
\date{~}
\maketitle
\thispagestyle{empty}

\vskip -10mm

\noindent
{\bfseries Abstract.}
Motivated by structures that appear in deep neural networks, we
investigate nonlinear composite models alternating proximity and
affine operators defined on different spaces. We first show that a
wide range of activation operators used in neural networks are
actually proximity operators. We then establish conditions for the
averagedness of the proposed composite constructs and investigate
their asymptotic properties. It is shown that the limit of the
resulting process solves a variational inequality which, in
general, does not derive from a minimization problem.

\section{Introduction}

A powerful tool from fixed point theory to analyze and
solve optimization and inclusion problems in a real Hilbert space
$\HH$ is the class of averaged nonexpansive operators, which was 
introduced in \cite{Bail78}. Recall that an operator
$T\colon\HH\to\HH$ is \emph{nonexpansive} if it is $1$-Lipschitzian,
and $\alpha$-\emph{averaged} for some $\alpha\in\rzeroun$ if there 
exists a nonexpansive operator $Q\colon\HH\to\HH$ such that
$T=(1-\alpha)\Id+\alpha Q$; if $\alpha=1/2$, $T$ is 
\emph{firmly nonexpansive}. The importance of firmly nonexpansive
operators in convex optimization and variational methods has long
been recognized \cite{Cras95,Ecks92,Mart72,Roc76a,Tsen92}. 
More generally, averaged operators were shown in \cite{Baus96} to 
play a prominent role in the analysis of convex feasibility
problems. In this context the underlying problem is to find a
common fixed point of averaged operators. In \cite{Opti04}, it was
shown that many convex minimization and monotone
inclusion problems reduce to the more general problem of
finding a fixed point of
compositions of averaged operators, which provided a unified
analysis of various proximal splitting algorithms. Along these
lines, several fixed point methods based on various combinations
of averaged operators have since been devised, see 
\cite{Arag18,Atto18,Barg18,Baus15,Borw17,Botr17,Brav18,%
Cegi12,Cens16,Jmaa15,Cond13,Mour18,Yama17} for recent work. 
Motivated by deep neural network structures with thus far
elusive asymptotic properties, we investigate in the present paper
a novel averaged operator model involving a mix of nonlinear and
linear operators.

Artificial neural networks have attracted considerable attention 
as a tool to better understand, model, and imitate the human brain 
\cite{Hayk98,Mccu43,Rose58}. In a Hilbertian setting 
\cite{Barro93}, an $(n+1)$-layer feed-forward neural network 
architecture acting on real Hilbert spaces 
$(\HH_i)_{0\leq i\leq n}$ is defined as the composition of 
operators $R_n\circ (W_n\cdot+b_n)\circ\cdots\circ R_1\circ 
(W_1\cdot+b_1)$ where, for every $i\in\{1,\ldots,n\}$, 
$R_i\colon\HH_i\to\HH_i$ is a nonlinear operator known as
an activation operator, 
$W_i\colon\HH_{i-1}\to\HH_i$ is a linear operator, known as a
weight operator, and $b_i\in\HH_i$ is a so-called bias parameter. 
Deep neural networks feature a (possibly large) number $n$
of layers. In recent years, they have been found to be quite
successful in a wide array of classification, recognition, and
prediction tasks; see \cite{Lecu15} and its bibliography.
Despite their success, the operational structure and properties
of deep neural networks are not yet well
understood from a mathematical viewpoint. In the present paper, we
propose to analyze them within the following iterative model.
We emphasize that our purpose is not to study the training
of the network, which consists of optimally setting
the weight operators and bias parameters from
data samples, but to analyze mathematically such a structure 
once it is trained. Our model is also of general interest in
constructive fixed point theory.

\begin{model}
\label{m:2}
Let $m\geq 1$ be an integer, let $\HH$ and 
$(\HH_i)_{0\leq i\leq m}$ be nonzero real Hilbert spaces, 
such that $\HH_m=\HH_0=\HH$.
For every $i\in\{1,\ldots,m\}$ and every $n\in\NN$, 
let $W_{i,n}\colon\HH_{i-1}\to\HH_i$ be a bounded linear 
operator, let $b_{i,n}\in\HH_i$, and let 
$R_{i,n}\colon\HH_i\to\HH_i$. Let $x_0\in\HH$, let
$(\lambda_n)_{n\in\NN}$ be a sequence in $\RPP$, set
\begin{equation}
\label{e:sj4}
(\forall n\in\NN)(\forall i\in\{1,\ldots,m\})\quad 
T_{i,n}\colon\HH_{i-1}\to\HH_i\colon x\mapsto R_{i,n}(W_{i,n}
x+b_{i,n}),
\end{equation}
and iterate
\begin{equation}
\label{e:algo4}
\begin{array}{l}
\text{for}\;n=0,1,\ldots\\
\left\lfloor
\begin{array}{ll}
x_{1,n}&\!\!\!=T_{1,n} x_n\\
x_{2,n}&\!\!\!=T_{2,n} x_{1,n}\\
       &\hskip -1mm\vdots\\
x_{m,n}&\!\!\!=T_{m,n} x_{m-1,n}\\
x_{n+1}&\!\!\!=x_n+\lambda_n(x_{m,n}-x_n).
\end{array}
\right.\\[2mm]
\end{array}
\end{equation}
\end{model}

In sharp contrast with existing algorithmic frameworks involving 
averaged operators (see cited works above), the operators
involved in Model~\ref{m:2} are not necessarily all defined on the
same Hilbert space and, in addition, they need not all be averaged.
Let us also note that the relaxation parameters
$(\lambda_n)_{n\in\NN}$ in \eqref{e:algo4}
allow us to model skip connections \cite{Svriv15}, in the spirit of
residual networks \cite{He16}. If $\lambda_n\equiv1$, we 
obtain the standard feed-forward architecture \cite{Hayk98}.

Our contributions are articulated around the following findings.
\begin{itemize}
\item
We show that a wide range of activation operators used in neural
networks are actually proximity operators, which paves the way to
the analysis of such networks via fixed point theory. 
\item
We provide a new analysis of compositions of proximity and affine
operators, establishing mild conditions that guarantee that the
resulting operator is averaged. 
\item
We show that, under suitable assumptions, the asymptotic output 
of the network converges to a point defined via a variational 
inequality. Furthermore, in general, this variational
inequality does not derive from a minimization problem.  
\end{itemize}

The remainder of the paper is organized as follows. In 
Section~\ref{sec:3}, we bring to light strong connections between 
the activation functions employed in neural networks and the 
theory of proximity operators in convex analysis. In 
Section~\ref{sec:2} we derive new results on the averagedness 
properties of compositions of proximity and 
affine operators acting on different spaces. 
In Section~\ref{sec:4}, we investigate the asymptotic behavior
of a class of deep neural networks structures and show that their
fixed points solve a variational inequality.
The main assumption on this subclass of Model~\ref{m:2} is that the
structure of the network is periodic in the sense that a group of
layers is repeated. Finally, in Section~\ref{sec:5}, the
same properties are established for a broader class of networks.

\noindent
{\bfseries Notation.} We follow standard notation from convex
analysis and operator theory \cite{Livre1,Rock70}. Thus, $\weakly$ 
and $\to$ denote, respectively, weak and strong convergence in 
$\HH$ and $\Gamma_0(\HH)$ is the class of lower semicontinuous 
convex functions $\varphi\colon\HH\to\RX$ such that 
$\dom\varphi=\menge{x\in\HH}{\varphi(x)<\pinf}\neq\emp$. Now let
$\varphi\in\Gamma_0(\HH)$. The conjugate of $\varphi$ is denoted by
$\varphi^*$, its subdifferential by $\partial\varphi$, and its 
proximity operator is $\prox_\varphi\colon\HH\to\HH\colon x\mapsto
\text{argmin}_{y\in\HH}(\varphi(y)+\|x-y\|^2/2)$. The symbols
$\ran T$, $\dom T$, $\Fix T$, and $\zer T$ denote respectively 
the range, the domain, the fixed point set, and the set of zeros
of an operator $T$. The space of bounded linear operators from a
Banach space $\XX$ to a Banach space $\YY$ is denoted by
$\BL(\XX,\YY)$. Finally, $\ell_+^1$ denotes the space of 
summable sequences in $\RP$.

\section{Proximal activation in neural networks}
\label{sec:3}

The following facts will be needed.

\begin{lemma}
\label{l:1}
Let $\varphi\in\Gamma_0(\HH)$. Then the following hold:
\begin{enumerate}
\itemsep0mm 
\item
\label{l:1i}
{\rm\cite[Proposition~12.29]{Livre1}}
$\Fix\prox_\varphi=\text{\rm Argmin}\,\varphi$.
\item
\label{l:1ii}
{\rm\cite[Corollary~24.5]{Livre1}}
Let $g\in\Gamma_0(\HH)$ be such that $\varphi=g-\|\cdot\|^2/2$. 
Then $\prox_\varphi=\nabla g^*$.
\end{enumerate}
\end{lemma}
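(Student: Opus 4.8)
Both assertions are classical facts of convex analysis, included here for use in the sequel; complete proofs are given in the cited items of \cite{Livre1}, so the plan is merely to indicate the mechanism. For \ref{l:1i}, I would combine two standard characterizations: Fermat's rule, which says that $\bar{x}$ minimizes $\varphi$ over $\HH$ if and only if $0\in\partial\varphi(\bar{x})$; and the identification of the proximity operator with the resolvent of the subdifferential, which says that $p=\prox_\varphi x$ if and only if $x-p\in\partial\varphi(p)$. Taking $x=p=\bar{x}$ in the latter turns the condition $\bar{x}\in\Fix\prox_\varphi$ into $0\in\partial\varphi(\bar{x})$, and a comparison with the former delivers $\Fix\prox_\varphi=\text{Argmin}\,\varphi$.

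For \ref{l:1ii}, I would set $g=\varphi+\|\cdot\|^2/2$, which again lies in $\Gamma_0(\HH)$, and route $\prox_\varphi$ through the conjugate of $g$. One way is direct: expanding $\|x-y\|^2/2$ shows that, for each $x\in\HH$, the point $\prox_\varphi x$ is the minimizer over $y\in\HH$ of $g(y)-\scal{x}{y}$, equivalently the maximizer of $\scal{x}{y}-g(y)$; by the Fenchel--Young equality the maximizers of the latter are precisely the elements of $\partial g^*(x)$, so the single-valuedness of $\prox_\varphi$ forces $\partial g^*$ to be single-valued, hence $g^*$ to be differentiable, and $\prox_\varphi=\nabla g^*$. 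A shorter way uses Moreau calculus: $g^*=(\varphi+\|\cdot\|^2/2)^*=\varphi^*\infconv(\|\cdot\|^2/2)$ is the Moreau envelope of $\varphi^*$, hence Fr\'echet differentiable on $\HH$ with $\nabla g^*=\Id-\prox_{\varphi^*}$, and Moreau's decomposition $\Id=\prox_\varphi+\prox_{\varphi^*}$ then identifies $\nabla g^*$ with $\prox_\varphi$.

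I expect no genuine obstacle. The only step that needs a moment's care is the passage, in \ref{l:1ii}, from the a priori set-valued map $\partial g^*$ to the bona fide operator $\nabla g^*$: this is exactly where the standing hypothesis $\varphi\in\Gamma_0(\HH)$ enters, since it makes $g$ strongly convex, hence $g^*$ finite-valued and continuously differentiable on all of $\HH$, so that the identity $\prox_\varphi=\nabla g^*$ is meaningful and holds at every point.
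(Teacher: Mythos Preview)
Your sketches are correct and standard. Note, however, that the paper does not prove Lemma~\ref{l:1} at all: both items are stated with explicit citations to \cite{Livre1} and used as black boxes, with no argument given in the paper. Your mechanisms for \ref{l:1i} (Fermat's rule together with the resolvent characterization $p=\prox_\varphi x\Leftrightarrow x-p\in\partial\varphi(p)$) and for \ref{l:1ii} (either the direct Fenchel--Young argument or the Moreau-envelope route via $g^*=\varphi^*\infconv(\|\cdot\|^2/2)$ and Moreau's decomposition) are exactly the ones underlying the cited results, and your remark that strong convexity of $g$ is what makes $g^*$ everywhere Fr\'echet differentiable is the right justification for writing $\nabla g^*$ rather than $\partial g^*$.
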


\subsection{Activation functions}
\label{sec:31}

An activation function is a function $\varrho\colon\RR\to\RR$ which 
models the firing activity of neurons. The simplest instance, that
goes back to the perceptron machine \cite{Rose58}, is that of a
binary firing model: the neuron is either firing or at rest. For
instance, if the firing level is $1$ and the rest state is $0$, we
obtain the binary step function 
\begin{equation}
\varrho\colon\xi\mapsto
\begin{cases}
1,&\text{if}\;\;\xi>0;\\
0,&\text{if}\;\;\xi\leq 0,
\end{cases}
\end{equation}
which was initially proposed in \cite{Mccu43}.
As this discontinuous activation model may lead to unstable neural
networks, various continuous approximations have
been proposed. Our key observation is that a vast
array of activation functions used in neural networks 
belong to the following class. 

\begin{definition}
\label{d:1}
The set of functions from $\RR$ to $\RR$ which are increasing,
1-Lipschitzian, and take value $0$ at $0$ is denoted by
$\AL(\RR)$.
\end{definition}

Remarkably, we can precisely characterize this class of activation 
functions as that of proximity operators. 
 
\begin{proposition}
\label{p:1}
Let $\varrho\colon\RR\to\RR$. Then $\varrho\in\AL(\RR)$ if and 
only if there exists a function $\phi\in\Gamma_0(\RR)$, which 
has $0$ as a minimizer, such that $\varrho=\prox_\phi$.
\end{proposition}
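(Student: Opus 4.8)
The plan is to prove each implication separately, using Lemma~\ref{l:1}\ref{l:1ii} as the key structural tool.

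For the ``only if'' direction, suppose $\varrho\in\AL(\RR)$. The idea is to recover the convex function $g$ whose gradient conjugate gives $\varrho$. Since $\varrho$ is increasing and $1$-Lipschitzian, it is the derivative of a convex, differentiable function $\psi\colon\RR\to\RR$ (take $\psi(\xi)=\int_0^\xi\varrho(t)\,dt$, normalized so $\psi(0)=0$); moreover $\psi$ has Lipschitz gradient with constant $1$, hence $\psi-|\cdot|^2/2$ is concave, i.e.\ $\psi$ is ``$1$-smooth.'' The natural candidate is then to set $g=\psi^*$, so that (formally) $\nabla g^*=\nabla\psi^{**}=\nabla\psi=\varrho$. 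To invoke Lemma~\ref{l:1}\ref{l:1ii} I need $g\in\Gamma_0(\RR)$ with $g-|\cdot|^2/2$ convex, which by Fenchel duality corresponds to $g^*=\psi$ having $1$-Lipschitz gradient — precisely what we have. Then $\phi:=g-|\cdot|^2/2\in\Gamma_0(\RR)$ and $\prox_\phi=\nabla g^*=\varrho$. Finally, $0$ is a minimizer of $\phi$ because by Lemma~\ref{l:1}\ref{l:1i}, $\Fix\prox_\phi=\operatorname{Argmin}\phi$, and $\varrho(0)=0$ means $0\in\Fix\varrho=\Fix\prox_\phi$.

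For the ``if'' direction, suppose $\varrho=\prox_\phi$ for some $\phi\in\Gamma_0(\RR)$ with $0\in\operatorname{Argmin}\phi$. Proximity operators on a Hilbert space are firmly nonexpansive, hence $1$-Lipschitzian, so $\varrho$ is $1$-Lipschitzian. On the real line, firm nonexpansiveness also forces monotonicity: from $\langle\prox_\phi x-\prox_\phi y\mid x-y\rangle\ge\|\prox_\phi x-\prox_\phi y\|^2\ge0$ we get that $\varrho$ is increasing. And $\varrho(0)=0$ follows from $0\in\operatorname{Argmin}\phi=\Fix\prox_\phi$ via Lemma~\ref{l:1}\ref{l:1i}. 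Hence $\varrho\in\AL(\RR)$.

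The main obstacle is the regularity bookkeeping in the ``only if'' direction: one must be careful that $\psi=\int_0^\cdot\varrho$ is genuinely convex, finite-valued, and has exactly $1$-Lipschitz gradient, so that passing to $g=\psi^*$ lands in $\Gamma_0(\RR)$ with $g-|\cdot|^2/2$ convex and $g^{**}=\psi$; the equality $\nabla g^*=\varrho$ then needs $\psi$ lower semicontinuous and convex so that $\psi^{**}=\psi$. None of this is deep, but it is the part where a careless argument could break, especially if $\varrho$ is merely increasing rather than strictly increasing (so $\psi$ need not be strictly convex and $g=\psi^*$ may fail to be differentiable — which is fine, since Lemma~\ref{l:1}\ref{l:1ii} only asks for $\nabla g^*$, i.e.\ the gradient of the conjugate, to exist). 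I would also double-check the edge behavior as $\xi\to\pm\infty$ to confirm $\dom\psi=\RR$ and $\phi\in\Gamma_0(\RR)$ rather than identically $+\infty$ somewhere.
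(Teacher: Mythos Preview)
Your argument is correct, but it takes a different route from the paper. The paper's proof is a two-line citation: it invokes \cite[Proposition~2.4]{Siop07} for the fact that the class of increasing $1$-Lipschitz functions on $\RR$ coincides with the class of proximity operators of functions in $\Gamma_0(\RR)$, and then applies Lemma~\ref{l:1}\ref{l:1i} to handle the condition $\varrho(0)=0\Leftrightarrow 0\in\text{Argmin}\,\phi$. You instead reconstruct the core equivalence from scratch via Lemma~\ref{l:1}\ref{l:1ii}: integrating $\varrho$ to obtain a convex $\psi$ with $1$-Lipschitz gradient, passing to $g=\psi^*$ (which is then $1$-strongly convex by the standard duality), and setting $\phi=g-|\cdot|^2/2$ so that $\prox_\phi=\nabla g^*=\nabla\psi=\varrho$. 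The paper's approach is shorter because it outsources the work; yours is self-contained and makes the Moreau-type mechanism explicit, which is arguably more informative. Your caveats about regularity (that $g$ need not be differentiable when $\varrho$ is not strictly increasing, but only $\nabla g^*$ matters; that $\dom\psi=\RR$ since $\varrho$ has at most linear growth) are all well placed and the bookkeeping goes through.
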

\begin{proof}
The fact that the class of increasing, $1$-Lipschitzian 
functions from $\RR$ to $\RR$ coincides with that of proximity
operators of functions in $\Gamma_0(\RR)$ is shown in 
\cite[Proposition~2.4]{Siop07}. In view of 
Lemma~\ref{l:1}\ref{l:1i} and Definition~\ref{d:1}, the proof is
complete.
\end{proof}

To illustrate the above results, let us provide examples of common
activation functions $\varrho\in\AL(\RR)$, and identify 
the potential $\phi$ they derive from in Proposition~\ref{p:1}
(see Fig.~\ref{fig:2}).

\begin{example}
\label{ex:000}
The most basic activation function is $\varrho=\Id=\prox_0$. 
It is in particular useful in dictionary learning approaches, which 
correspond to the linear special case of 
Model~\ref{m:2} \cite{Tariy16}.
\end{example}

\begin{example}
\label{ex:0}
The saturated linear activation function \cite{Hayk98} 
\begin{equation}
\label{e:ex0}
\varrho\colon\RR\to\RR\colon\xi\mapsto
\begin{cases}
1,&\text{if}\;\;\xi>1;\\
\xi,&\text{if}\;\;-1\leq\xi\leq 1;\\
-1,&\text{if}\;\;\xi<-1
\end{cases}
\end{equation}
can be written as $\varrho=\prox_\phi$, where $\phi$ is the
indicator function of $[-1,1]$.
\end{example}

\begin{example}
\label{ex:1}
The rectified linear unit (ReLU) activation function \cite{Nair10} 
\begin{equation}
\varrho\colon\RR\to\RR\colon\xi\mapsto
\begin{cases}
\xi,&\text{if}\;\;\xi>0;\\
0,&\text{if}\;\;\xi\leq 0
\end{cases}
\end{equation}
can be written as $\varrho=\prox_{\phi}$, where $\phi$ is the 
indicator function of $\RP$.
\end{example}

\begin{example}
\label{ex:2}
Let $\alpha\in\rzeroun$. The parametric rectified linear unit 
activation function \cite{Heka12} is
\begin{equation}
\label{e:fence1}
\varrho\colon\RR\to\RR\colon\xi\mapsto
\begin{cases}
\xi,&\text{if}\;\;\xi> 0;\\
\alpha\xi,&\text{if}\;\;\xi\leq 0.
\end{cases}
\end{equation}
We have $\varrho=\prox_{\phi}$, where 
\begin{equation}
\phi\colon\RR\to\RX\colon\xi\mapsto
\begin{cases}
0,&\text{if}\;\;\xi>0;\\
(1/\alpha-1)\xi^2/2,
&\text{if}\;\;\xi\leq 0.
\end{cases}
\end{equation}
\end{example}
\begin{proof}
Let $\xi\in\RR$. Then $\phi'(\xi)=0$ if $\xi>0$, and 
$\phi'(\xi)=(1/\alpha-1)\xi$ if $\xi\leq0$. In turn
$(\Id+\phi')\xi=\xi$ if $\xi>0$, and 
$(\Id+\phi')(\xi)=\xi/\alpha$ if $\xi\leq0$. Hence,
$\varrho=(\Id+\phi')^{-1}$ is given by \eqref{e:fence1}.
\end{proof}

\begin{example}
\label{ex:3}
The bent identity activation function 
$\varrho\colon\RR\to\RR\colon\xi\mapsto(\xi+\sqrt{\xi^2+1}-1)/2$
satisfies $\varrho=\prox_{\phi}$, where 
\begin{equation}
\phi\colon\RR\to\RX\colon\xi\mapsto
\begin{cases}
\xi/2-\big(\ln(\xi+1/2)\big)/4,&\text{if}\;\;\xi>-1/2;\\
\pinf,&\text{if}\;\;\xi\leq-1/2.
\end{cases}
\end{equation}
\end{example}
\begin{proof}
This follows from 
\cite[Lemma~2.6 and Example~2.18]{Smms05}.
\end{proof}

\begin{example}
\label{ex:4}
The inverse square root unit activation function \cite{Carl17} is 
$\varrho\colon\RR\to\RR\colon\xi\mapsto{\xi}/{\sqrt{1+\xi^2}}$.
We have $\varrho=\prox_{\phi}$, where 
\begin{equation}
\phi\colon\RR\to\RX\colon\xi\mapsto
\begin{cases}
-\xi^2/2-\sqrt{1-\xi^2},&\text{if}\;\;|\xi|\leq 1;\\
\pinf,&\text{if}\;\;|\xi|>1.
\end{cases}
\end{equation}
\end{example}
\begin{proof}
Let $\xi\in\left]-1,1\right[=\dom\nabla\phi=\dom\partial\phi
=\ran\prox_\phi$. 
Then $\xi+\phi'(\xi)=\xi/\sqrt{1-\xi^2}$ and therefore 
$\prox_\phi=(\Id+\phi')^{-1}\colon\mu\mapsto\mu/\sqrt{1+\mu^2}$. 
\end{proof}

\begin{example}
\label{ex:14}
The inverse square root linear unit activation function 
\cite{Carl17} 
\begin{equation}
\label{e:fence2}
\varrho\colon\RR\to\RR\colon\xi\mapsto
\begin{cases}
\xi,&\text{if}\;\;\xi\geq 0;\\
\dfrac{\xi}{\sqrt{1+\xi^2}},&\text{if}\;\;\xi<0
\end{cases}
\end{equation}
can be written as $\varrho=\prox_{\phi}$, where 
\begin{equation}
\phi\colon\RR\to\RX\colon\xi\mapsto
\begin{cases}
0,&\text{if}\;\;\xi\geq 0;\\
1-\xi^2/2-\sqrt{1-\xi^2},&\text{if}\;\;-1\leq\xi<0;\\
\pinf,&\text{if}\;\;\xi<-1.
\end{cases}
\end{equation}
\end{example}
\begin{proof}
Let $\xi\in\left]-1,\pinf\right[=\dom\nabla\phi=\ran\prox_\phi$.
Then $\xi+\phi'(\xi)=\xi$ if $\xi\geq 0$, and 
$\xi+\phi'(\xi)=\xi/\sqrt{1-\xi^2}$ if $\xi<0$. Hence,
$\varrho=(\Id+\phi')^{-1}$ is given by \eqref{e:fence2}.
\end{proof}

\begin{example}
\label{ex:5}
The arctangent activation function $(2/\pi)\text{arctan}$ is the
proximity operator of 
\begin{align}
\label{e:Louie2012c}
\phi\colon\RR&\to\RX\colon\xi\mapsto
\begin{cases}
-\dfrac{2}{\pi}\ln\Big(\cos\Big(\Frac{\pi\xi}{2}\Big)\Big)-
\dfrac{1}{2}\xi^2,&\text{if}\;\;|\xi|<1;\\
\pinf,&\text{if}\;\;|\xi|\geq 1.
\end{cases}
\end{align}
\end{example}
\begin{proof}
Let $\xi\in\left]-1,1\right[=\dom\nabla\phi=
\ran\prox_\phi$.
Then $\xi+\phi'(\xi)=\text{tan}(\pi\xi/2)$ and therefore 
$\varrho=(\Id+\phi')^{-1}=(2/\pi)\text{arctan}$.
\end{proof}

\begin{example}
\label{ex:6}
The hyperbolic tangent activation function $\text{tanh}$
\cite{Lecu98} is the proximity operator of 
\begin{align}
\phi\colon\RR\to\RX\colon
\xi\mapsto
\begin{cases}
\dfrac{(1+\xi)\ln(1+\xi)+(1-\xi)\ln(1-\xi)-\xi^2}{2}
&\text{if}\;\;|\xi|<1;\\
\ln(2)-1/2&\text{if}\;\;|\xi|=1;\\
\pinf,&\text{if}\;\;|\xi|>1.
\end{cases}
\end{align}
\end{example}
\begin{proof}
Let $\xi\in\left]-1,1\right[=\dom\nabla\phi=\ran\prox_\phi$.
Then $\xi+\phi'(\xi)=\text{arctanh}(\xi)$ and therefore 
$\varrho=(\Id+\phi')^{-1}=\text{tanh}$.
\end{proof}

\begin{example}
\label{ex:7}
The unimodal sigmoid activation function \cite{Glor11}
\begin{equation}
\label{e:ex7}
\varrho\colon\RR\to\RR\colon\xi\mapsto
\frac{1}{1+e^{-\xi}}-\frac{1}{2}
\end{equation}
is the proximity operator of 
\begin{align}
\phi\colon\RR&\to\RX\nonumber\\
\xi&\mapsto
\begin{cases}
(\xi+1/2)\ln(\xi+1/2)+(1/2-\xi)\ln(1/2-\xi)-\dfrac{1}{2}
(\xi^2+1/4)
&\text{if}\;\;|\xi|<1/2;\\
-1/4,&\text{if}\;\;|\xi|=1/2;\\
\pinf,&\text{if}\;\;|\xi|>1/2.
\end{cases}
\end{align}
\end{example}
\begin{proof}
Let $\xi\in\left]-1/2,1/2\right[=\dom\nabla\phi=\ran\prox_\phi$.
Then $\xi+\phi'(\xi)=\ln((1+2\xi)/(1-2\xi))$ and therefore 
$\prox_\phi=(\Id+\phi')^{-1}\colon\mu\mapsto 
(1/2)(e^\mu-1)/(e^\mu+1)=1/(1+e^{-\mu})-1/2$.
\end{proof}

\begin{remark}
Examples~\ref{ex:6} and \ref{ex:7} are closely related in the sense
that the function of \eqref{e:ex7} can be written as
$\varrho=(1/2)\text{tanh}(\cdot/2)$.
\end{remark}

\begin{example}
\label{ex:9}
The Elliot activation function is \cite{Elli93} 
$\varrho\colon\RR\to\RR\colon\xi\mapsto{\xi}/{(1+|\xi|)}$
can be written as $\varrho=\prox_\phi$, where 
\begin{align}
\phi\colon\RR&\to\RX\nonumber\\
\xi&\mapsto
\begin{cases}
-|\xi|-\ln(1-|\xi|)-\Frac{\xi^2}{2},&\text{if}\;\;|\xi|<1;\\
\pinf,&\text{if}\;\;|\xi|\geq 1.
\end{cases}
\end{align}
\end{example}
\begin{proof}
Let $\xi\in\left]-1,1\right[=\dom\nabla\phi=\ran\prox_\phi$. 
Then $\xi+\phi'(\xi)=\xi/(1-|\xi|)$ and therefore 
$\prox_\phi=(\Id+\phi')^{-1}\colon\mu\mapsto\mu/(1+|\mu|)$. 
\end{proof}

\begin{example}
\label{ex:2013-01-30}
The inverse hyperbolic sine activation function 
$\text{arcsinh}$ is the proximity operator of 
$\phi=\text{cosh}-|\cdot|^2/2$.
\end{example}
\begin{proof}
Let $\xi\in\RR$. Then $\xi+\phi'(\xi)=\text{sinh}\,\xi$ and 
therefore $\prox_\phi=(\Id+\phi')^{-1}=\text{arcsinh}$.
\end{proof}

\begin{example}
\label{ex:24}
The logarithmic activation function \cite{Bils00}
\begin{equation}
\varrho\colon\RR\to\RR\colon\xi\mapsto\sign(\xi)\ln\big(1+|\xi|\big)
\end{equation}
is the proximity operator of 
\begin{equation}
\phi\colon\RR\to\RX
\colon\xi\mapsto e^{|\xi|}-|\xi|-1-\frac{\xi^2}{2}.
\end{equation}
\end{example}
\begin{proof}
We have $\phi'\colon\xi\mapsto\sign(\xi)(e^{|\xi|}-1)-\xi$. Hence
$(\Id+\phi')\colon\xi\mapsto\sign(\xi)(e^{|\xi|}-1)$ and, in turn,
$\prox_\phi=(\Id+\phi')^{-1}\colon\xi\mapsto\sign(\xi)\ln(1+|\xi|)$.
\end{proof}

The class of activation functions $\AL(\RR)$ has interesting 
stability properties. 

\begin{proposition}
\label{p:2}
The following hold:
\begin{enumerate}
\itemsep0mm 
\item
\label{p:2i}
Let $\alpha\in\RPP$ and $\beta\in\RPP$ be such that 
$\alpha\beta\leq 1$, and let $\varrho\in\AL(\RR)$. Then
$\alpha \varrho(\beta\cdot)\in\AL(\RR)$.
\item
\label{p:2ii}
Let $(\varrho_i)_{i\in I}$ be a finite family in $\AL(\RR)$ and let
$(\omega_i)_{i\in I}$ be real numbers in $\rzeroun$ such that
$\sum_{i\in I}\omega_i=1$. Then 
$\sum_{i\in I}\omega_i\varrho_i\in\AL(\RR)$.
\item
\label{p:2iii}
Let $\varrho_1\in\AL(\RR)$ and $\varrho_2\in\AL(\RR)$.
Then $\varrho_1\circ\varrho_2\in\AL(\RR)$.
\item
\label{p:2iv}
Let $\varrho\in\AL(\RR)$. Then $\Id-\varrho\in\AL(\RR)$.
\item
\label{p:2v}
Let $\varrho_1\in\AL(\RR)$ and $\varrho_2\in\AL(\RR)$.
Then $(\varrho_1-\varrho_2+\Id)/2\in\AL(\RR)$.
\item
\label{p:2vi}
Let $\varrho_1\in\AL(\RR)$ and $\varrho_2\in\AL(\RR)$.
Then $\varrho_1\circ(2\varrho_2-\Id)+\Id-\varrho_2\in\AL(\RR)$.
\end{enumerate}
\end{proposition}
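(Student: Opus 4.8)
The plan is, in each item, to verify the three defining properties of $\AL(\RR)$ — monotonicity (in the increasing sense), $1$-Lipschitzianity, and vanishing at $0$. The last is immediate throughout from $\varrho(0)=0$ (together with $\sum_i\omega_i=1$ in (ii), and $\varrho_1(0)-\varrho_2(0)=0$ in (vi)). Items (i)--(iii) are then routine, (iv)--(v) rest on one short observation, and the only genuine work is in (vi).

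For (i)--(iii): a positive scaling, a positive convex combination, and a composition of increasing functions are again increasing, which settles monotonicity. For the Lipschitz constant, in (i) one has $|\alpha\varrho(\beta\xi)-\alpha\varrho(\beta\eta)|\leq\alpha\beta\,|\xi-\eta|\leq|\xi-\eta|$ by $\alpha\beta\leq1$; in (ii), $\big|\sum_i\omega_i(\varrho_i(\xi)-\varrho_i(\eta))\big|\leq\big(\sum_i\omega_i\big)|\xi-\eta|=|\xi-\eta|$; in (iii), the two unit Lipschitz constants compose to $1$.

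For (iv), the key remark is that, for $\varrho\in\AL(\RR)$ and $\xi\leq\eta$, monotonicity and $1$-Lipschitzianity together say precisely that $0\leq\varrho(\eta)-\varrho(\xi)\leq\eta-\xi$; hence $0\leq(\Id-\varrho)(\eta)-(\Id-\varrho)(\xi)\leq\eta-\xi$, which is simultaneously the monotonicity and the $1$-Lipschitzianity of $\Id-\varrho$. (Equivalently, via Proposition~\ref{p:1}: write $\varrho=\prox_\phi$ with $0\in\operatorname{Argmin}\phi$; Moreau's decomposition gives $\Id-\varrho=\prox_{\phi^*}$, and $0\in\partial\phi(0)=\operatorname{Argmin}\phi^*$.) Item (v) then follows immediately, since $(\varrho_1-\varrho_2+\Id)/2=\tfrac12\varrho_1+\tfrac12(\Id-\varrho_2)$ is, by (iv), a convex combination of two members of $\AL(\RR)$, so (ii) applies.

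Item (vi) is the substantive one, and the subtlety to flag is that it does \textbf{not} follow formally from (iii)--(v): $2\varrho_2-\Id$ need not be increasing, hence need not lie in $\AL(\RR)$. The clean route is to recognize, writing $\varrho_j=\prox_{\phi_j}$ via Proposition~\ref{p:1},
\[
F:=\varrho_1\circ(2\varrho_2-\Id)+\Id-\varrho_2=\tfrac12\big(\Id+(2\varrho_1-\Id)\circ(2\varrho_2-\Id)\big),
\]
that is, $F$ is the Douglas--Rachford operator of $\phi_1,\phi_2$: a composition of the two reflections $2\varrho_j-\Id$ (each nonexpansive) averaged with $\Id$, hence firmly nonexpansive. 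On $\RR$, $|F\xi-F\eta|^2\leq(F\xi-F\eta)(\xi-\eta)$ forces $F\xi-F\eta$ to have the sign of $\xi-\eta$ and $|F\xi-F\eta|\leq|\xi-\eta|$, so $F$ is increasing and $1$-Lipschitzian, whence $F\in\AL(\RR)$. A self-contained alternative avoids firm nonexpansiveness entirely: fix $\xi<\eta$, put $\Delta=\eta-\xi$ and $\delta=\varrho_2(\eta)-\varrho_2(\xi)\in[0,\Delta]$, so that $(2\varrho_2-\Id)$ increases by $2\delta-\Delta$ and $(\Id-\varrho_2)$ increases by $\Delta-\delta$ between $\xi$ and $\eta$; splitting on the sign of $2\delta-\Delta$ and bounding the corresponding increment of $\varrho_1$ (in $[0,2\delta-\Delta]$ if $2\delta\geq\Delta$, in $[2\delta-\Delta,0]$ otherwise) gives $0\leq F(\eta)-F(\xi)\leq\Delta$ in both cases. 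I expect the only real obstacle to be here — spotting the Douglas--Rachford identity, or, failing that, keeping the signs straight in the case analysis.
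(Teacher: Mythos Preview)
Your proof is correct. For items (i)--(iii) and for the core of (vi) it coincides with the paper's argument: the paper also verifies Definition~\ref{d:1} directly for (i)--(iii), and for (vi) it invokes the firm nonexpansiveness of $\varrho_1\circ(2\varrho_2-\Id)+\Id-\varrho_2$ (citing \cite[Proposition~4.31(ii)]{Livre1}) and then reads off monotonicity and nonexpansiveness, just as you do via the Douglas--Rachford identity.

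Where you differ is in (iv)--(v). The paper appeals to \cite[Section~3.3]{Comb18} for the fact that $\Id-\varrho$ and $(\varrho_1-\varrho_2+\Id)/2$ are proximity operators, and then checks the fixed-point condition at $0$. Your route is more elementary and self-contained: for (iv) you observe directly that $0\leq\varrho(\eta)-\varrho(\xi)\leq\eta-\xi$ is equivalent to the same pair of inequalities for $\Id-\varrho$ (with the Moreau decomposition $\Id-\prox_\phi=\prox_{\phi^*}$ as an alternative), and for (v) you reduce to (ii)+(iv) via the decomposition $(\varrho_1-\varrho_2+\Id)/2=\tfrac12\varrho_1+\tfrac12(\Id-\varrho_2)$, which the paper does not use. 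Your extra elementary case analysis for (vi) is also a genuine bonus: it avoids both the Douglas--Rachford identification and any reference to firm nonexpansiveness, at the cost of a sign split that you handle correctly.
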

\begin{proof}
\ref{p:2i}--\ref{p:2iii}: This follows at once from
Definition~\ref{d:1}.

\ref{p:2iv}--\ref{p:2v}:
The fact that the resulting operators are proximity operators is
established in \cite[Section~3.3]{Comb18}. The fact that they are
proximity operators of a function $\phi\in\Gamma_0(\HH)$ that is 
minimal at $0$ is equivalent to the fact that $\prox_\phi 0=0$
Lemma~\ref{l:1}\ref{l:1i}. This identity is easily 
seen to hold in each instance. 

\ref{p:2vi}: Set
$\varrho=\varrho_1\circ(2\varrho_2-\Id)+\Id-\varrho_2$. Then
$\varrho$ is firmly
nonexpansive \cite[Proposition~4.31(ii)]{Livre1}. It is therefore
increasing and nonexpansive. Finally, $\varrho(0)=0$.
\end{proof}

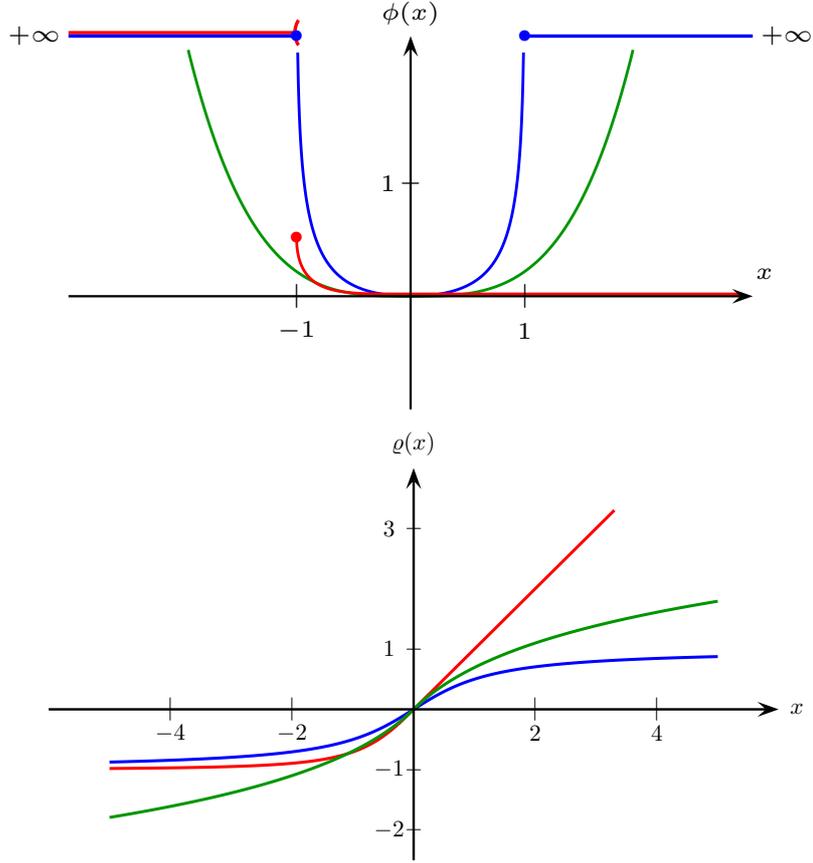
\begin{figure}[!th]
\begin{center}
\vskip -22mm
\scalebox{1.5} 
{
\begin{pspicture}(-3.5,-2.2)(3.5,3.0) 
\def\muu{3.14159}
\def\eee{2.7183}
\psplot[plotpoints=400,linewidth=0.025cm,linestyle=solid,%
algebraic,linecolor=blue]{-0.99}{0.99}%
{-(2/\muu)*ln(cos(\muu*x/2))-x^2/2}
\psplot[plotpoints=400,linewidth=0.025cm,linestyle=solid,%
algebraic,linecolor=dgreen]{-1.95}{1.95}{\eee^(abs(x))-abs(x)-1-x^2/2}
\psplot[plotpoints=400,linewidth=0.025cm,linestyle=solid,%
algebraic,linecolor=red]{-1.0}{0.0}{0.02+1-x^2/2-sqrt(1-x^2)}
\psplot[plotpoints=400,linewidth=0.025cm,linestyle=solid,%
algebraic,linecolor=red]{0.0}{2.9}{0.02}
\psline[linewidth=0.03cm,linecolor=blue,linestyle=solid]%
(1.0,2.3)(3.0,2.3)
\psline[linewidth=0.03cm,linecolor=red,linestyle=solid]{-(}%
(-3.0,2.33)(-1.0,2.33)
\psline[linewidth=0.03cm,linecolor=blue,linestyle=solid]%
(-3.0,2.3)(-1.0,2.3)
\psline[linewidth=0.02cm,arrowsize=0.05cm 4.0,%
arrowlength=1.4,arrowinset=0.4]{->}(-3,0)(3.0,0)
\psline[linewidth=0.02cm,arrowsize=0.05cm 4.0,%
arrowlength=1.4,arrowinset=0.4]{->}(0,-1)(0,2.3)
\rput(1.0,2.3){\blue\tiny$\bullet$}
\rput(-1.0,2.3){\blue\tiny$\bullet$}
\rput(-1.0,0.52){\red\tiny$\bullet$}
\rput(-1.0,-0.0){\tiny$|$}
\rput(-1.0,-0.3){\tiny$-1$}
\rput(1.0,-0.0){\tiny$|$}
\rput(1.0,-0.3){\tiny$1$}
\rput(-0.2,1.00){\tiny$1$}
\rput(-0.0,1.00){\tiny$-$}
\rput(0.0,2.5){\tiny$\phi(x)$}
\rput(3.1,0.2){\tiny$x$}
\rput(3.3,2.3){\tiny$\pinf$}
\rput(-3.3,2.3){\tiny$\pinf$}
\end{pspicture} 
}
\vskip -17mm
\scalebox{0.8} 
{
\begin{pspicture}(-6.7,-2.5)(6.6,4.8) 
\def\muu{3.141592}
\psplot[plotpoints=400,linewidth=0.05cm,linestyle=solid,%
algebraic,linecolor=blue]{-5.0}{5.0}{2*ATAN(x)/\muu}
\psplot[plotpoints=400,linewidth=0.05cm,linestyle=solid,%
algebraic,linecolor=red]{0.0}{3.3}{x}
\psplot[plotpoints=400,linewidth=0.05cm,linestyle=solid,%
algebraic,linecolor=red]{-5.0}{0.0}{x/sqrt(1+x^2)}
\psplot[plotpoints=400,linewidth=0.05cm,linestyle=solid,%
algebraic,linecolor=dgreen]{-5.0}{0.0}{-ln(1-x)}
\psplot[plotpoints=400,linewidth=0.05cm,linestyle=solid,%
algebraic,linecolor=dgreen]{0.0}{5.0}{ln(1+x)}
\psline[linewidth=0.04cm,arrowsize=0.09cm 4.0,%
arrowlength=1.4,arrowinset=0.4]{->}(-6,0)(6.0,0)
\psline[linewidth=0.04cm,arrowsize=0.09cm 4.0,%
arrowlength=1.4,arrowinset=0.4]{->}(0,-2.5)(0,4.0)
\rput(-2.00,-0.0){$|$}
\rput(-4.00,-0.0){$|$}
\rput(-2.00,-0.4){$-2$}
\rput(-4.00,-0.4){$-4$}
\rput(2.00,-0.0){$|$}
\rput(4.00,-0.0){$|$}
\rput(2.00,-0.4){$2$}
\rput(-0.4,3.00){$3$}
\rput(-0.0,3.00){$-$}
\rput(-0.4,-2.00){$-2$}
\rput(-0.0,-2.00){$-$}
\rput(-0.4,-1.00){$-1$}
\rput(-0.0,-1.00){$-$}
\rput(-0.4,1.00){$1$}
\rput(-0.0,1.00){$-$}
\rput(4.00,-0.4){$4$}
\rput(0.0,4.4){$\varrho(x)$}
\rput(6.3,0){$x$}
\end{pspicture} 
}
\end{center}
\vskip 0mm
\caption{The function $\phi$ (top) and the corrresponding proximal
activation function (bottom) $\varrho$ in Proposition~\ref{p:1}.
Example~\ref{ex:14} is in red, Example~\ref{ex:5} is in blue, 
Example~\ref{ex:24} is in green.}
\label{fig:2}
\end{figure}

\begin{remark}
Using Proposition~\ref{p:2}, the above examples can be
combined to obtain additional activation functions. For instance,
it follows from Example~\ref{ex:0} and 
Proposition~\ref{p:2}\ref{p:2iv} that the soft thresholder
\begin{equation}
\varrho\colon\RR\to\RR\colon\xi\mapsto
\begin{cases}
\xi-1,&\text{if}\;\;\xi>1;\\
0,&\text{if}\;\;-1\leq\xi\leq 1;\\
\xi+1,&\text{if}\;\;\xi<-1
\end{cases}
\end{equation}
belongs to $\AL(\RR)$. It was proposed as an activation function 
in \cite{Zhan01}.
\end{remark}

\subsection{Activation operators}

In Section~\ref{sec:31}, we have described activation functions
which model neuronal activity in terms of a scalar function. In
this section, we extend this notion to more general 
activation operators. 

\begin{definition}
\label{d:2}
Let $\HH$ be a real Hilbert space and let $R\colon\HH\to\HH$.
Then $R$ belongs to the class $\AL(\HH)$ if there exists a 
function $\varphi\in\Gamma_0(\HH)$ which is
minimal at the zero vector and such that $R=\prox_\varphi$.
\end{definition}

Property~\ref{p:5i} below shows that activation operators in 
$\AL(\HH)$ have strong stability properties. On the other hand, the
boundedness property~\ref{p:5iii} is important in neural
network-based functional approximation \cite{Cybe89,Funa89}.

\begin{proposition}
\label{p:5}
Let $\HH$ be a real Hilbert space
and let $R\in\AL(\HH)$. Then the following hold:
\begin{enumerate}
\itemsep0mm 
\item
\label{p:5i-}
$R0=0$.
\item
\label{p:5i}
Let $x$ and $y$ be in $\HH$. Then
$\|Rx-Ry\|^2\leq\|x-y\|^2-\|x-y-Rx+Ry\|^2$.
\item
\label{p:5ii}
Let $x\in\HH$. Then $\|Rx\|\leq\|x\|$.
\item
\label{p:5iii}
Let $\varphi\in\Gamma_0(\HH)$ be such that $R=\prox_\varphi$. 
Then $\ran R$ is bounded if and only if $\dom\varphi$ is bounded. 
\end{enumerate}
\end{proposition}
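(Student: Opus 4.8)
The plan is to obtain \ref{p:5i-}--\ref{p:5ii} directly from two classical facts about proximity operators, and to reduce \ref{p:5iii} to a density property of subdifferential domains. Write $R=\prox_\varphi$ with $\varphi\in\Gamma_0(\HH)$ minimal at $0$, i.e.\ $0\in\text{\rm Argmin}\,\varphi$. For \ref{p:5i-}, Lemma~\ref{l:1}\ref{l:1i} gives $\Fix\prox_\varphi=\text{\rm Argmin}\,\varphi$, which contains $0$; hence $R0=\prox_\varphi 0=0$. For \ref{p:5i}, recall that every proximity operator is firmly nonexpansive (see \cite{Livre1}), and that firm nonexpansiveness of an operator $T$ is equivalent to $\|Tx-Ty\|^2+\|(\Id-T)x-(\Id-T)y\|^2\leq\|x-y\|^2$ for all $x,y$; applied to $T=R$, this is exactly the stated inequality. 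For \ref{p:5ii}, specialize \ref{p:5i} to $y=0$ and use \ref{p:5i-}: $\|Rx\|^2\leq\|x\|^2-\|x-Rx\|^2\leq\|x\|^2$.

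For \ref{p:5iii}, the key observation is the identity $\ran\prox_\varphi=\dom\partial\varphi$: indeed $p=\prox_\varphi x$ is equivalent to $x-p\in\partial\varphi(p)$, so $p\in\ran\prox_\varphi$ if and only if $\partial\varphi(p)\neq\emp$. One implication is then immediate, since $\dom\partial\varphi\subseteq\dom\varphi$ forces $\ran R$ to be bounded whenever $\dom\varphi$ is. For the converse I would invoke the classical fact that $\overline{\dom\partial\varphi}=\overline{\dom\varphi}$ for $\varphi\in\Gamma_0(\HH)$ (see, e.g., \cite{Livre1}): if $\dom\varphi$ were unbounded, pick $y_n\in\dom\varphi$ with $\|y_n\|\to\pinf$ and, by density, $z_n\in\dom\partial\varphi=\ran R$ with $\|z_n-y_n\|\leq 1$, so that $\|z_n\|\to\pinf$, contradicting the boundedness of $\ran R$. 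Equivalently, since a subset of $\HH$ is bounded if and only if its closure is: $\ran R$ is bounded $\Leftrightarrow$ $\overline{\ran R}=\overline{\dom\partial\varphi}=\overline{\dom\varphi}$ is bounded $\Leftrightarrow$ $\dom\varphi$ is bounded.

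I expect no real obstacle: \ref{p:5i-}--\ref{p:5ii} amount to bookkeeping around firm nonexpansiveness of $\prox_\varphi$, and the only ingredient in \ref{p:5iii} that is not completely elementary is the density statement $\overline{\dom\partial\varphi}=\overline{\dom\varphi}$, which is standard for proper lower semicontinuous convex functions and can simply be quoted. The one point worth stating carefully is that it is precisely this density --- not the obvious inclusion $\ran\prox_\varphi\subseteq\dom\varphi$ --- that is needed for the ``only if'' direction of \ref{p:5iii}.
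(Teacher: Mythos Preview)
Your proposal is correct and follows essentially the same route as the paper: items \ref{p:5i-}--\ref{p:5ii} are handled identically via Lemma~\ref{l:1}\ref{l:1i} and firm nonexpansiveness of $\prox_\varphi$, and for \ref{p:5iii} both arguments rest on the identity $\ran\prox_\varphi=\dom\partial\varphi$ together with the density $\overline{\dom\partial\varphi}=\overline{\dom\varphi}$. The paper derives the range identity via $\ran(\Id+\partial\varphi)^{-1}=\dom(\Id+\partial\varphi)=\dom\partial\varphi$ and leaves the boundedness equivalence implicit, whereas you spell it out, but there is no substantive difference.
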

\begin{proof}
\ref{p:5i-}: This follows from Lemma~\ref{l:1}\ref{l:1i}.

\ref{p:5i}: This follows from the firm nonexpansiveness 
of proximity operators \cite[Proposition~12.28]{Livre1}.

\ref{p:5ii}: Set $y=0$ in \ref{p:5i} and use \ref{p:5i-}.

\ref{p:5iii}: We have $\ran R=\ran(\Id+\partial\varphi)^{-1}
=\dom(\Id+\partial\varphi)=\dom\partial\varphi$. On the other
hand, $\dom\partial\varphi$ is a dense subset of $\dom\varphi$
\cite[Corollary~16.39]{Livre1}.
\end{proof}

\begin{proposition}
\label{p:3}
Let $\HH$ and $\GG$ be real Hilbert spaces. 
Then the following hold:
\begin{enumerate}
\itemsep0mm 
\item
\label{p:3i}
Let $L\in\BL(\HH,\GG)$ be such 
that $\|L\|\leq 1$ and let $R\in\AL(\HH)$. Then 
$L^*\circ R\circ L\in\AL(\HH)$.
\item
\label{p:3ii}
Let $(R_i)_{i\in I}$ be a finite family in $\AL(\HH)$ and let
$(\omega_i)_{i\in I}$ be real numbers in $\rzeroun$ such that
$\sum_{i\in I}\omega_i=1$. Then 
$\sum_{i\in I}\omega_iR_i\in\AL(\HH)$.
\item
\label{p:3iv}
Let $R\in\AL(\HH)$. Then $\Id-R\in\AL(\HH)$.
\item
\label{p:3v}
Let $R_1\in\AL(\HH)$ and $R_2\in\AL(\HH)$.
Then $(R_1-R_2+\Id)/2\in\AL(\HH)$.
\end{enumerate}
\end{proposition}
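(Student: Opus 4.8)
The plan is to reduce everything to the characterization underlying Definition~\ref{d:2}: an operator on a real Hilbert space lies in the corresponding class $\AL$ if and only if it is the proximity operator of a function in $\Gamma_0$ and fixes the origin (by Lemma~\ref{l:1}\ref{l:1i}, fixing the origin is exactly the requirement that the generating function be minimal at $0$). The bridge used throughout is Lemma~\ref{l:1}\ref{l:1ii}. In one direction, if $R=\prox_\varphi$ with $\varphi\in\Gamma_0$, then $g:=\varphi+\|\cdot\|^2/2$ is $1$-strongly convex, hence supercoercive, so $h:=g^*$ is finite and differentiable on the whole space, with $1$-Lipschitzian gradient $\nabla h=\nabla g^*=\prox_\varphi=R$. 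In the other direction, if $S=\nabla h$ for some $h\in\Gamma_0$ (necessarily finite and $C^{1,1}$) and $S$ is nonexpansive, then $h^*$ is $1$-strongly convex, so $\varphi:=h^*-\|\cdot\|^2/2\in\Gamma_0$, and Lemma~\ref{l:1}\ref{l:1ii} gives $\prox_\varphi=\nabla h^{**}=\nabla h=S$. Thus membership in $\AL$ amounts to the conjunction: nonexpansive, gradient of a convex function, and fixing $0$. I will verify these three properties for each construction.

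For \ref{p:3i}, write $R=\nabla h$ as above. The chain rule gives $L^*\circ R\circ L=L^*(\nabla h)L=\nabla(h\circ L)$, and $h\circ L$ is convex, real-valued, and continuous (hence in $\Gamma_0$) because $h$ is $C^{1,1}$ and $L$ is linear. Moreover $\|L^*RLx-L^*RLy\|\le\|L\|\,\|RLx-RLy\|\le\|L\|^2\,\|x-y\|\le\|x-y\|$, so $L^*\circ R\circ L$ is nonexpansive, and $(L^*\circ R\circ L)0=L^*R(L0)=L^*(R0)=0$ by Proposition~\ref{p:5}\ref{p:5i-}; hence $L^*\circ R\circ L\in\AL(\HH)$. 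For \ref{p:3ii}, writing each $R_i=\nabla h_i$ with $h_i$ convex and $C^{1,1}$ yields $\sum_{i\in I}\omega_iR_i=\nabla\bigl(\sum_{i\in I}\omega_ih_i\bigr)$, a nonexpansive gradient of a $\Gamma_0$ function that fixes $0$; conclude as before. For \ref{p:3iv}, use Moreau's decomposition $\prox_\varphi+\prox_{\varphi^*}=\Id$: if $R=\prox_\varphi$, then $\Id-R=\prox_{\varphi^*}$ with $\varphi^*\in\Gamma_0(\HH)$, and $(\Id-R)0=0-R0=0$, so $\Id-R\in\AL(\HH)$. Finally, \ref{p:3v} follows from the two preceding items: $(R_1-R_2+\Id)/2=\tfrac12 R_1+\tfrac12(\Id-R_2)$, the operator $\Id-R_2$ lies in $\AL(\HH)$ by \ref{p:3iv}, and \ref{p:3ii} with weights $1/2,1/2$ concludes.

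The heart of the matter is the equivalence ``nonexpansive gradient of a convex function $\iff$ proximity operator of a function in $\Gamma_0$''. Establishing it rigorously uses the Fenchel-duality dictionary relating $1$-Lipschitz continuity of $\nabla h$ to $1$-strong convexity of $h^*$ (equivalently, convexity of $h^*-\|\cdot\|^2/2$), plus the routine verification that the functions produced are proper and lower semicontinuous. Everything else — the chain rule in \ref{p:3i}, the operator-norm estimate, Moreau's decomposition in \ref{p:3iv}, and the observation that each operator fixes $0$ (which, by Lemma~\ref{l:1}\ref{l:1i}, promotes ``proximity operator'' to ``element of $\AL$'') — is immediate. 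If one wishes to avoid re-deriving this equivalence, one can instead quote the Hilbert-space characterization of proximity operators already invoked for Proposition~\ref{p:2}\ref{p:2iv}--\ref{p:2v} (see \cite{Comb18}) and argue verbatim as there.
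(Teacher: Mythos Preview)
Your proof is correct. The paper's own argument is a one-liner: it cites \cite[Section~3.3]{Comb18} for the fact that each of the four constructions preserves the property of being a proximity operator, then observes that $0$ is a fixed point of the resulting operator and invokes Lemma~\ref{l:1}\ref{l:1i}. You instead unpack that citation: from Lemma~\ref{l:1}\ref{l:1ii} you recover Moreau's characterization (an operator is the proximity operator of some $\Gamma_0$ function if and only if it is nonexpansive and equal to the gradient of a convex function), and then verify nonexpansiveness, the gradient structure, and the condition $0\mapsto 0$ by hand in each case---the chain rule for \ref{p:3i}, linearity of the gradient for \ref{p:3ii}, Moreau's decomposition $\prox_\varphi+\prox_{\varphi^*}=\Id$ for \ref{p:3iv}, and the reduction $(R_1-R_2+\Id)/2=\tfrac12 R_1+\tfrac12(\Id-R_2)$ for \ref{p:3v}. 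This is precisely the content behind the reference the paper cites, made explicit; your closing sentence even acknowledges the citation route as an alternative. So the two proofs are the same argument presented at different levels of detail: yours buys self-containment, the paper's buys brevity.
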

\begin{proof}
The fact that the resulting operators are proximity operators is
established in \cite[Section~3.3]{Comb18}. In addition, $0$ is
clearly a fixed point of the resulting operators. In view of
Lemma~\ref{l:1}\ref{l:1i}, the proof is complete.  
\end{proof}

\begin{example}
The softmax activation operator \cite{Brid90} is 
\begin{equation}
R\colon\RR^N\to\RR^N\colon (\xi_k)_{1\leq k\leq N}\mapsto
\left({\exp(\xi_k)}\left/{\displaystyle\sum_{j=1}^N 
\exp(\xi_j)}\right.\right)_{1\leq k\leq N}-u,
\end{equation}
where $u=(1,\ldots,1)/N\in \RR^N$.
We have $R=\prox_\varphi$, where
$\varphi=\psi(\cdot+u)+\scal{\cdot}{u}$ and
\begin{align}
\psi\colon\RR^N&\to\RX\nonumber\\
(\xi_k)_{1\leq k\leq N}&\mapsto
\begin{cases}
\displaystyle\sum_{k=1}^N\bigg(\xi_k\ln\xi_k-
\frac{\xi_k^2}{2}\bigg),
&\displaystyle\text{if}\;\;(\xi_k)_{1\leq i\leq N}\in [0,1]^N\;
\;\text{and}\;\;\sum_{k=1}^N\xi_k=1;\\
\pinf,&\text{otherwise},
\end{cases}
\end{align}
with the convention $0\ln 0=0$.
\end{example}
\begin{proof}
Set
\begin{align}
g\colon\RR^N&\to\RX\nonumber\\
(\xi_k)_{1\leq k\leq N}&\mapsto
\begin{cases}
\Sum_{k=1}^N\xi_k\ln\xi_k,
&\text{if}\;\;(\xi_k)_{1\leq k\leq N}\in[0,1]^N\;
\;\text{and}\;\;\Sum_{k=1}^N\xi_k=1;\\
\pinf,&\text{otherwise}.
\end{cases}
\end{align}
Then $\psi=g-\|\cdot\|^2/2$ and \cite[Section~16]{Rock70} asserts
that
\begin{equation}
g^*\colon\RR^N\to\RR\colon (\xi_k)_{1\leq k\leq N}\mapsto
\ln\left(\Sum_{k=1}^N\exp(\xi_k)\right).
\end{equation}
Since $\nabla g^*=R+u$, according to
Lemma~\ref{l:1}\ref{l:1ii}, $R=\prox_{\psi}-u$. We 
complete the proof by invoking the shift properties of 
proximity operators \cite[Proposition~24.8(iii)]{Livre1}.
\end{proof}

Separable activation operators supply another important instance 
of activation operators.

\begin{proposition}
\label{p:4}
Let $\HH$ be a separable real Hilbert space, let
$(e_k)_{k\in\bK\subset\NN}$ be an orthonormal basis of 
$\HH$, and let $(\phi_k)_{k\in\bK}$ be a family of functions in
$\Gamma_0(\RR)$ such that
$(\forall k\in\bK)$ $\phi_k\geq\phi_k(0)=0$. Define
\begin{equation}
R\colon\HH\to\HH\colon
x\mapsto\sum_{k\in\bK}\big(\prox_{\phi_k}\scal{x}{e_k}\big)e_k.
\end{equation}
Then $R\in\AL(\HH)$.
\end{proposition}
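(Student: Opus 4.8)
The plan is to exhibit the potential explicitly as the separable sum
\[
\varphi\colon\HH\to\RX\colon x\mapsto\sum_{k\in\bK}\phi_k(\scal{x}{e_k}),
\]
and then to check three things: that $\varphi\in\Gamma_0(\HH)$, that $\varphi$ is minimal at the zero vector, and that $\prox_\varphi=R$. In view of Definition~\ref{d:2}, these three facts give $R\in\AL(\HH)$.

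First I would establish that $R$ is well defined. Since each $\phi_k$ lies in $\Gamma_0(\RR)$ and satisfies $\phi_k\geq\phi_k(0)=0$, Lemma~\ref{l:1}\ref{l:1i} yields $\prox_{\phi_k}0=0$, and combining this with the nonexpansiveness of $\prox_{\phi_k}$ gives $|\prox_{\phi_k}\xi|\leq|\xi|$ for every $\xi\in\RR$. Hence, for every $x\in\HH$,
\[
\sum_{k\in\bK}|\prox_{\phi_k}\scal{x}{e_k}|^2\leq\sum_{k\in\bK}|\scal{x}{e_k}|^2=\|x\|^2<\pinf,
\]
so the series defining $Rx$ converges in $\HH$ and $\|Rx\|\leq\|x\|$. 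Next, each function $x\mapsto\phi_k(\scal{x}{e_k})$ is convex and lower semicontinuous as the composition of $\phi_k\in\Gamma_0(\RR)$ with a continuous linear form, and it is nonnegative because $\phi_k\geq 0$. A sum of nonnegative lower semicontinuous convex functions is lower semicontinuous and convex, being the pointwise supremum of its finite partial sums; thus $\varphi$ is convex and lower semicontinuous with values in $\RPX$. Moreover $\varphi(0)=\sum_{k\in\bK}\phi_k(0)=0$, which shows simultaneously that $\varphi$ is proper (hence $\varphi\in\Gamma_0(\HH)$) and that $0\in\text{\rm Argmin}\,\varphi$, i.e. $\varphi$ is minimal at the zero vector.

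Finally I would compute $\prox_\varphi$. Fix $x=\sum_{k\in\bK}\xi_ke_k\in\HH$ and an arbitrary $y=\sum_{k\in\bK}\eta_ke_k\in\HH$. Using the orthonormality of $(e_k)_{k\in\bK}$,
\[
\varphi(y)+\frac{1}{2}\|x-y\|^2=\sum_{k\in\bK}\bigg(\phi_k(\eta_k)+\frac{1}{2}|\xi_k-\eta_k|^2\bigg),
\]
a sum of nonnegative terms whose $k$th summand is minimized over $\eta_k\in\RR$ precisely at $\eta_k=\prox_{\phi_k}\xi_k$. Since all summands are nonnegative, the whole sum is therefore minimized at the termwise minimizer $Rx$, and its value there is finite: comparing the $k$th summand to its value at $\eta_k=0$ gives $\phi_k(\prox_{\phi_k}\xi_k)+|\xi_k-\prox_{\phi_k}\xi_k|^2/2\leq\phi_k(0)+\xi_k^2/2=\xi_k^2/2$, so the sum is bounded by $\|x\|^2/2$. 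Consequently $Rx$ minimizes $y\mapsto\varphi(y)+\|x-y\|^2/2$ over $\HH$, and by strict convexity this minimizer is unique, so $\prox_\varphi x=Rx$.

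The argument is essentially bookkeeping; the only points requiring a little care are the convergence of the series defining $R$ and the passage from minimizing the (possibly infinite) sum to minimizing each summand, and both are controlled by the uniform pointwise bound $|\prox_{\phi_k}\xi|\leq|\xi|$. I do not expect any genuine obstacle beyond keeping track of these estimates.
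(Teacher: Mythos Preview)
Your proof is correct and follows exactly the paper's approach: both identify the same potential $\varphi\colon x\mapsto\sum_{k\in\bK}\phi_k(\scal{x}{e_k})$ and show that $R=\prox_\varphi$ with $\varphi$ minimal at $0$. The only difference is that the paper delegates the verification that $\varphi\in\Gamma_0(\HH)$ and $R=\prox_\varphi$ to an external reference \cite[Example~2.19]{Smms05}, whereas you spell out these details directly.
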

\begin{proof}
The fact that $R$ is the proximity operator of the $\Gamma_0(\HH)$
function $\varphi\colon
x\mapsto\sum_{k\in\bK}\phi_k(\scal{x}{e_k})$ is
established in \cite[Example~2.19]{Smms05}.
In addition, it is clear that $\varphi$ is minimal at $0$.
\end{proof}

\section{Compositions of firmly nonexpansive and affine operators}
\label{sec:2}

Our analysis will revolve around the following property
for a family of linear operators $(W_i)_{1\leq i\leq m+1}$.

\begin{condition}
\label{o:1}
Let $m\geq 0$ be an integer, let 
$(\HH_i)_{0\leq i\leq m}$ be real Hilbert spaces, set
$\HH_{m+1}=\HH_0$, and let $\alpha\in [1/2,1]$. For every
$i\in\{1,\ldots,m+1\}$, let $W_i\in\BL(\HH_{i-1},\HH_i)$ and set
\begin{equation}
\label{e:defLi}
L_i\colon\HH_0\times\cdots\times\HH_{i-1}\to\HH_i\colon
(x_k)_{0\leq k\leq i-1}\mapsto
\sum_{k=0}^{i-1}\big(W_i\circ\cdots\circ W_{k+1}\big) x_k.
\end{equation}
It is required that, 
for every $\boldsymbol{x}=(x_i)_{0\leq i\leq m} 
\in\HH_0\times\cdots\times\HH_m$ such that
\begin{equation}
\label{e:alphastabcondh} 
(\forall i\in\{0,\ldots,m\})\quad\|x_i\|\leq
\begin{cases}
1,&\text{if}\;\;i=0;\\
\|L_i(x_0,\ldots,x_{i-1})\|,&\text{if}\;\;i\geq 1,
\end{cases}
\end{equation}
there holds
\begin{equation}
\label{e:alphastabcondn} 
\|L_{m+1}\boldsymbol{x}-2^{m+1}(1-\alpha)x_0\|+
\|L_{m+1}\boldsymbol{x}\|\leq2^{m+1}\alpha \|x_0\|.
\end{equation}
\end{condition}

\begin{remark}
\label{re:alphastabn}
In Condition~\ref{o:1}, we take $\alpha\geq 1/2$ because, if 
$\boldsymbol{x}=(x_i)_{0\leq i\leq m}\in(\HH_0\smallsetminus\{0\})
\times\HH_1\times\cdots\times\HH_m$ satisfies 
\eqref{e:alphastabcondn}, then
$2^{m+1}(1-\alpha)\|x_0\|\leq
\|L_{m+1}\boldsymbol{x}-2^{m+1}(1-\alpha)x_0\|+
\|L_{m+1}\boldsymbol{x}\|\leq 2^{m+1}\alpha\|x_0\|$.
\end{remark}

We establish some preliminary results before providing properties
that imply Condition~\ref{o:1}.

\begin{lemma}
\label{l:11}
Let $m\geq 1$ be an integer, let $(\HH_i)_{0\leq i\leq m}$ be real
Hilbert spaces, and set $\theta_0=1$. For every
$i\in\{1,\ldots,m\}$, let $W_i\in\BL(\HH_{i-1},\HH_i)$ and set
\begin{multline}
\label{e:defthetaell}
\theta_i=\|W_i\circ\cdots\circ W_1\|\\
+\sum_{k=1}^{i-1}\sum_{1\leq j_1<\ldots<j_k\leq i-1}
\|W_i\circ\cdots\circ W_{j_k+1}\|\,
\|W_{j_k}\circ\cdots\circ W_{j_{k-1}+1}\|\cdots 
\|W_{j_1}\circ\cdots\circ W_1\|.
\end{multline}
Let $(x_i)_{0\leq i\leq m} \in\HH_0\times\cdots\times\HH_m$ be
such that \eqref{e:alphastabcondh} is satisfied. Then the 
following hold:
\begin{enumerate}
\itemsep0mm 
\item
\label{l:11i}
$(\forall i\in\{1,\ldots,m\})$ $\theta_i=\sum_{k=0}^{i-1}
\theta_k\|W_i\circ\cdots\circ W_{k+1}\|$.
\item
\label{l:11ii}
$(\forall i\in\{1,\ldots,m\})$ $\|x_i\|\leq\theta_i\|x_0\|$.
\end{enumerate}
\end{lemma}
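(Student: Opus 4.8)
The plan is to prove \ref{l:11i} first by a direct combinatorial manipulation of the defining sum \eqref{e:defthetaell}, and then deduce \ref{l:11ii} by induction on $i$ using \ref{l:11i} together with the hypothesis \eqref{e:alphastabcondh} and submultiplicativity of the operator norm.

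For \ref{l:11i}, fix $i\in\{1,\ldots,m\}$. In the claimed identity $\theta_i=\sum_{k=0}^{i-1}\theta_k\|W_i\circ\cdots\circ W_{k+1}\|$, the term $k=i-1$ contributes $\theta_{i-1}\|W_i\|$, the term $k=0$ contributes $\theta_0\|W_i\circ\cdots\circ W_1\|=\|W_i\circ\cdots\circ W_1\|$, which is exactly the leading term of \eqref{e:defthetaell}. So I would expand each $\theta_k$ for $1\le k\le i-1$ using its own definition \eqref{e:defthetaell} and check that, after multiplying by $\|W_i\circ\cdots\circ W_{k+1}\|$ and summing over $k$, every product string $\|W_i\circ\cdots\circ W_{j_r+1}\|\cdots\|W_{j_1}\circ\cdots\circ W_1\|$ appearing in the double sum of \eqref{e:defthetaell} is produced exactly once: namely, a chain with breakpoints $1\le j_1<\cdots<j_r\le i-1$ arises precisely from the term $k=j_r$ in $\sum_{k=0}^{i-1}\theta_k\|W_i\circ\cdots\circ W_{k+1}\|$, with the remaining breakpoints $j_1<\cdots<j_{r-1}$ coming from the expansion of $\theta_{j_r}$ (the case $r=1$ corresponding to $k=j_1$ and the leading term of $\theta_{j_1}=\theta_k$). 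This is a bijection between the terms on the two sides, so the identity holds. An even cleaner route: recognise $\theta_i$ as the sum, over all ordered "compositions" $0=k_0<k_1<\cdots<k_r=i$ of the interval, of $\prod_{s=1}^r\|W_{k_s}\circ\cdots\circ W_{k_{s-1}+1}\|$; then \ref{l:11i} is just the statement obtained by conditioning on the last breakpoint $k_{r-1}=k$.

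For \ref{l:11ii}, I argue by induction on $i\in\{1,\ldots,m\}$. For $i=1$: \eqref{e:alphastabcondh} gives $\|x_1\|\le\|L_1(x_0)\|=\|W_1 x_0\|\le\|W_1\|\,\|x_0\|=\theta_1\|x_0\|$, since $\theta_1=\|W_1\|$ by \eqref{e:defthetaell} (empty inner sum). For the inductive step, suppose $\|x_k\|\le\theta_k\|x_0\|$ for all $k\in\{0,\ldots,i-1\}$ (with $\theta_0=1$). By \eqref{e:alphastabcondh} and the definition \eqref{e:defLi} of $L_i$,
\begin{equation*}
\|x_i\|\le\|L_i(x_0,\ldots,x_{i-1})\|
=\Big\|\sum_{k=0}^{i-1}(W_i\circ\cdots\circ W_{k+1})x_k\Big\|
\le\sum_{k=0}^{i-1}\|W_i\circ\cdots\circ W_{k+1}\|\,\|x_k\|
\le\Big(\sum_{k=0}^{i-1}\theta_k\|W_i\circ\cdots\circ W_{k+1}\|\Big)\|x_0\|,
\end{equation*}
and the bracketed quantity equals $\theta_i$ by part~\ref{l:11i}. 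This closes the induction.

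The only mildly delicate point is the bookkeeping in \ref{l:11i} — making sure the map from breakpoint-chains on $\{1,\ldots,i-1\}$ to (last breakpoint $k$, chain on $\{1,\ldots,k-1\}$) is a genuine bijection and that the leading term of \eqref{e:defthetaell} is correctly accounted for by the case $r=1$ (equivalently, by the $k=0$ term once one notes $\theta_0=1$). Everything else is routine use of the triangle inequality and submultiplicativity of $\|\cdot\|$.
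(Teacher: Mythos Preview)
Your proposal is correct and follows essentially the same route as the paper: the paper dispatches \ref{l:11i} with the one-line remark that it ``follows recursively from \eqref{e:defthetaell}'' (which your composition/last-breakpoint bijection makes explicit), and proves \ref{l:11ii} by exactly the induction you describe---bounding $\|x_i\|$ via $\|L_i(x_0,\ldots,x_{i-1})\|$, applying the triangle inequality and submultiplicativity, using the inductive hypothesis $\|x_k\|\le\theta_k\|x_0\|$, and then invoking \ref{l:11i} to collapse the sum to $\theta_i\|x_0\|$.
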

\begin{proof}
\ref{l:11i}: This follows recursively from \eqref{e:defthetaell}.

\ref{l:11ii}:
For every $i\in\{1,\ldots,m\}$, let $L_i$ be as in \eqref{e:defLi}. 
We proceed by induction on $m$. We first observe that the
inequality is satisfied if $m=1$ since
$\|x_1\|\leq\|L_1x_0\|=\|W_1x_0\|\leq\|W_1\|\,\|x_0\|
=\theta_1\|x_0\|$. Now assume that $m\geq 2$ and that the
inequalities hold for $(x_1,\ldots,x_{m-1})$. Then, since
\ref{l:11i} yields
\begin{equation}
\theta_m=\|W_m\circ\cdots\circ W_1\|+\sum_{k=1}^{m-1}
\theta_k\|W_m\circ\cdots\circ W_{k+1}\|,
\end{equation}
we obtain
\begin{align}
\|x_m\|
\leq\|L_m(x_0,\ldots,x_{m-1})\|&=\bigg\|\sum_{k=0}^{m-1}
(W_m\circ\cdots\circ W_{k+1})x_k\bigg\|
\nonumber\\
&\leq\sum_{k=0}^{m-1}\|W_m\circ\cdots\circ W_{k+1}\|\,
\|x_k\|\nonumber\\
&\leq\bigg(\|W_m\circ\cdots\circ W_1\|+\sum_{k=1}^{m-1}\theta_k
\|W_m\circ\cdots\circ W_{k+1}\|\bigg)\|x_0\|\nonumber\\
&=\theta_m\|x_0\|,
\end{align}
which concludes the proof.
\end{proof}

\begin{lemma}
\label{l:8}
Let $\HH$ be a real Hilbert space, and let $x$ and $y$ be in $\HH$.
Then
\begin{equation}
\|x\|\,\|y\|-\scal{x}{y}\leq(\|x\|+\|y\|-\|x+y\|)(\|x\|+\|y\|).
\end{equation}
\end{lemma}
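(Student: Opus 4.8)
The plan is to reduce the inequality to a single elementary one-variable estimate. Write $a=\|x\|$, $b=\|y\|$, and let $c=\|x+y\|$. By the triangle inequality, $|a-b|\leq c\leq a+b$, and by expanding $c^2=\|x+y\|^2=a^2+2\scal{x}{y}+b^2$ we get the exact identity $\scal{x}{y}=(c^2-a^2-b^2)/2$. Substituting this into the left-hand side,
\begin{equation}
\|x\|\,\|y\|-\scal{x}{y}=ab-\frac{c^2-a^2-b^2}{2}=\frac{a^2+2ab+b^2-c^2}{2}=\frac{(a+b)^2-c^2}{2}=\frac{(a+b-c)(a+b+c)}{2}.
\end{equation}
So the claim becomes
\begin{equation}
\frac{(a+b-c)(a+b+c)}{2}\leq(a+b-c)(a+b),
\end{equation}
which is what I would aim to prove.

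Next I would dispose of this reduced inequality. The factor $a+b-c$ is nonnegative (triangle inequality), so if $a+b-c=0$ both sides vanish and we are done; otherwise we may divide by $a+b-c>0$, reducing the claim to $(a+b+c)/2\leq a+b$, i.e.\ $c\leq a+b$, which is again just the triangle inequality. Equivalently, and cleaner to write: since $a+b-c\geq 0$ and $c\geq 0$, we have
\begin{equation}
(a+b-c)(a+b+c)=(a+b-c)\big((a+b)+c\big)\leq(a+b-c)\big((a+b)+(a+b)\big)=2(a+b-c)(a+b),
\end{equation}
because multiplying the nonnegative quantity $a+b-c$ by the larger factor $2(a+b)\geq(a+b)+c$ can only increase the product. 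Dividing by $2$ gives exactly the desired bound. Combining this with the displayed identity for $\|x\|\,\|y\|-\scal{x}{y}$ completes the argument.

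There is essentially no obstacle here: the only two facts used are the polarization identity for $\|x+y\|^2$ and the triangle inequality $\|x-y\|$-type bounds on $c$ in terms of $a$ and $b$; the rest is a factorization. The one point to be slightly careful about is handling the degenerate case $a+b=c$ (equivalently $x$ and $y$ positively proportional, or one of them zero) before dividing, but the second, division-free presentation above sidesteps even that by keeping everything as a product of nonnegative factors throughout.
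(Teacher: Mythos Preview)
Your proof is correct. Both arguments begin with the polarization identity $\scal{x}{y}=(\|x+y\|^2-\|x\|^2-\|y\|^2)/2$, but the final inequality is obtained differently. You factor the left-hand side as $\tfrac12(a+b-c)(a+b+c)$ and then invoke the triangle inequality $c\leq a+b$ to compare the factor $a+b+c$ with $2(a+b)$. The paper instead rewrites $\|x\|^2+\|y\|^2+\scal{x}{y}+\|x\|\,\|y\|=\tfrac12\big(\|x+y\|^2+(\|x\|+\|y\|)^2\big)$ and then applies the elementary inequality $\tfrac12(A^2+B^2)\geq AB$, i.e.\ $(A-B)^2\geq 0$, with $A=\|x+y\|$ and $B=\|x\|+\|y\|$; no appeal to the triangle inequality is needed. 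Your factorization is arguably more transparent and makes the role of the triangle inequality explicit, while the paper's version shows the result follows from pure algebra (AM--GM) once polarization is used. Both are equally short and elementary.
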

\begin{proof}
Since $\|x+y\|^2-2\|x+y\|(\|x\|+\|y\|)+(\|x\|+\|y\|)^2\geq 0$,
we have 
\begin{align}
&\hskip -8mm\|x\|^2+\|y\|^2+\scal{x}{y}+\|x\|\,\|y\|\nonumber\\
&=\|x\|^2+\|y\|^2
+\frac{\|x+y\|^2-\|x\|^2-\|y\|^2}{2}
+\frac{(\|x\|+\|y\|)^2-\|x\|^2-\|y\|^2}{2}\nonumber\\
&=\frac{\|x+y\|^2+(\|x\|+\|y\|)^2}{2}\nonumber\\
&\geq\|x+y\|(\|x\|+\|y\|), 
\end{align}
as claimed.
\end{proof}

\begin{notation}
\label{n:1}
Let $m\geq 0$ be an integer, and let 
$(\HH_i)_{0\leq i\leq m}$ be
real Hilbert spaces. Let $\XXX$ be the standard vector space 
$\HH_0\times\cdots\times\HH_m$ equipped with the norm
$\|\cdot\|_{\XXX}\colon\boldsymbol{x}=(x_i)_{0\leq i\leq m}\mapsto
\max_{0\leq i\leq m}\|x_i\|$ and let $\YYY$ be the standard 
vector space $\HH_{0}\times\HH_{0}$ equipped with the norm 
$\|\cdot\|_{\YYY}\colon\boldsymbol{y}=(y_1,y_2)
\mapsto\|y_1\|+\|y_2\|$. Henceforth, the norm of 
$\boldsymbol{M}\in\BL(\XXX,\YYY)$ is denoted by 
$\|\boldsymbol{M}\|_{\XXX,\YYY}$.
\end{notation}

\begin{proposition}
\label{p:9}
Let $m\geq 0$ be an integer, let 
$(\HH_i)_{0\leq i\leq m}$ be nonzero real Hilbert spaces, set
$\HH_{m+1}=\HH_0$, and use Notation~\ref{n:1}. For every
$i\in\{1,\ldots,m+1\}$, let $W_i\in\BL(\HH_{i-1},\HH_i)$. Further,
let $\alpha\in[1/2,1]$, let $\theta_0=1$, let 
$(\theta_i)_{1\leq i\leq m+1}$ be as in \eqref{e:defthetaell}, 
and set
\begin{subequations}
\begin{empheq}[left={\empheqlbrace\,}]{align}
&W=W_{m+1}\circ\cdots\circ W_1
\label{e:2.21a}\\
&\displaystyle\mu=\inf_{x\in\HH_0,\,\|x\|=1}\scal{Wx}{x}
\label{e:2.21b}\\
&\displaystyle M\colon\XXX\to\HH_0\colon\boldsymbol{x}\mapsto
\sum_{i=0}^m\theta_i(W_{m+1}\circ\cdots\circ W_{i+1})x_i 
\label{e:2.21d}\\
&\boldsymbol{M}\colon\XXX\to\YYY\colon\boldsymbol{x}\mapsto
\Frac{1}{2^{m+1}\alpha}\big(M\boldsymbol{x}-
2^{m+1}(1-\alpha)x_0,M\boldsymbol{x}\big).
\label{e:2.21e}
\end{empheq}
\end{subequations}
Suppose that one of the following holds:
\begin{enumerate}
\itemsep0mm 
\item 
\label{p:9i}
There exists $i\in\{1,\ldots,m+1\}$ such that $W_i=0$.
\item 
\label{p:9ii} 
$\|\boldsymbol{M}\|_{\XXX,\YYY}\leq 1$.
\item 
\label{p:9iii}
$\|W-2^{m+1}(1-\alpha)\Id\|-\|W\|+2\theta_{m+1}\leq 2^{m+1}\alpha$.
\item 
\label{p:9iv}
$\alpha\neq 1$, for every $i\in\{1,\ldots,m+1\}$ $W_i\neq 0$, and
there exists $\eta\in [0,\alpha/((1-\alpha)\theta_{m+1})]$
such that
\begin{equation}
\begin{cases}
\theta_{m+1}\leq 2^{m+1}\alpha\\
\alpha\theta_{m+1}+(1-\alpha)(\|\Id-\eta W\|-\eta\|W\|)
(\theta_{m+1}-\|W\|)\leq 2^m(2\alpha-1)+(1-\alpha)\mu.
\end{cases}
\end{equation}
\end{enumerate}
Then $(W_i)_{1\leq i\leq m+1}$ satisfies Condition~\ref{o:1}.
\end{proposition}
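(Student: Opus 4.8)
The plan is to reduce all four cases to a single operator-norm inequality, dispose of the degenerate case~\ref{p:9i} and the ``crude'' case~\ref{p:9iii} in a line each, and then attack \ref{p:9iv} directly, which is where the real work lies.

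\emph{Reduction.} Since \eqref{e:alphastabcondh} and \eqref{e:alphastabcondn} both scale linearly under $\boldsymbol{x}\mapsto\lambda\boldsymbol{x}$ ($\lambda>0$), it suffices to verify \eqref{e:alphastabcondn} when $\|x_0\|\in\{0,1\}$; if $x_0=0$ then \eqref{e:alphastabcondh} forces $x_i=0$ for every $i$ by an immediate induction (e.g.\ $\|x_1\|\le\|W_1x_0\|=0$), so $L_{m+1}\boldsymbol{x}=0$ and there is nothing to prove. Assume $\|x_0\|=1$. Then Lemma~\ref{l:11}\ref{l:11ii} gives $\|x_i\|\le\theta_i$ for $i\in\{1,\ldots,m\}$, so we may write $x_i=\theta_iy_i$ with $\|y_i\|\le1$ (taking $y_i=0$ when $\theta_i=0$, which forces $x_i=0$ too) and set $y_0=x_0$; then $\|\boldsymbol{y}\|_{\XXX}\le1$, and comparing \eqref{e:2.21d} with \eqref{e:defLi} yields $M\boldsymbol{y}=\sum_{i=0}^m(W_{m+1}\circ\cdots\circ W_{i+1})x_i=L_{m+1}\boldsymbol{x}$, whence by \eqref{e:2.21e} and $\|x_0\|=1$ we get $2^{m+1}\alpha\,\|\boldsymbol{M}\boldsymbol{y}\|_{\YYY}=\|L_{m+1}\boldsymbol{x}-2^{m+1}(1-\alpha)x_0\|+\|L_{m+1}\boldsymbol{x}\|$. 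Hence \eqref{e:alphastabcondn} holds as soon as $\|\boldsymbol{M}\|_{\XXX,\YYY}\le1$, and it is enough to establish this bound in cases \ref{p:9i}, \ref{p:9iii}, \ref{p:9iv} (in \ref{p:9ii} it is the hypothesis).

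\emph{Cases \ref{p:9i} and \ref{p:9iii}.} If $W_i=0$ for some $i$, then $W_{m+1}\circ\cdots\circ W_{k+1}=0$ whenever $k\le i-1$, while every summand in \eqref{e:defthetaell} defining $\theta_j$ with $j\ge i$ contains the norm of a block composition of consecutive weights covering the index $i$ and hence vanishes; thus $M=0$, so $\|\boldsymbol{M}\boldsymbol{x}\|_{\YYY}=((1-\alpha)/\alpha)\|x_0\|\le((1-\alpha)/\alpha)\|\boldsymbol{x}\|_{\XXX}\le\|\boldsymbol{x}\|_{\XXX}$ because $\alpha\ge1/2$. For \ref{p:9iii}, fix $\boldsymbol{y}$ with $\|\boldsymbol{y}\|_{\XXX}\le1$ and split $M\boldsymbol{y}=Wy_0+s$ with $s=\sum_{i=1}^m\theta_i(W_{m+1}\circ\cdots\circ W_{i+1})y_i$ (using $\theta_0=1$ and \eqref{e:2.21a}); Lemma~\ref{l:11}\ref{l:11i} with $i=m+1$ gives $\|s\|\le\sum_{i=1}^m\theta_i\|W_{m+1}\circ\cdots\circ W_{i+1}\|=\theta_{m+1}-\|W\|$, so by the triangle inequality $2^{m+1}\alpha\,\|\boldsymbol{M}\boldsymbol{y}\|_{\YYY}\le\|W-2^{m+1}(1-\alpha)\Id\|+\|W\|+2(\theta_{m+1}-\|W\|)\le2^{m+1}\alpha$ by \ref{p:9iii}.

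\emph{Case \ref{p:9iv} --- the main obstacle.} Keep the notation $\ell:=M\boldsymbol{y}=Wy_0+s$, $\|s\|\le\theta_{m+1}-\|W\|$, $\|y_0\|\le1$; the goal is $\|\ell-2^{m+1}(1-\alpha)y_0\|+\|\ell\|\le2^{m+1}\alpha$. Since $\|\ell\|\le\theta_{m+1}\le2^{m+1}\alpha$ by the first side condition, one may square the equivalent inequality $\|\ell-2^{m+1}(1-\alpha)y_0\|\le2^{m+1}\alpha-\|\ell\|$, and expanding (using $\|y_0\|\le1$ and $\alpha+(1-\alpha)=1$) reduces the entire claim to the \emph{linear} inequality
\[
\alpha\|\ell\|\le2^m(2\alpha-1)+(1-\alpha)\scal{\ell}{y_0}.
\]
It remains to prove this. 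The elementary estimates $\|\ell\|\le\theta_{m+1}$ and $\scal{\ell}{y_0}\ge\scal{Wy_0}{y_0}-\|s\|\,\|y_0\|\ge\mu-(\theta_{m+1}-\|W\|)$ (from \eqref{e:2.21b} and Cauchy--Schwarz) already yield it when $\eta=0$, because then $\|\Id-\eta W\|-\eta\|W\|=1$; for the sharper statement one adds the information $\|y_0-\eta\ell\|=\|(\Id-\eta W)y_0-\eta s\|\le\|\Id-\eta W\|+\eta(\theta_{m+1}-\|W\|)$, which on squaring produces a second lower bound on $\scal{\ell}{y_0}$ that grows with $\|\ell\|$ and therefore dominates the trivial one exactly in the worst regime $\|\ell\|\approx\theta_{m+1}$. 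Combining the two lower bounds (with Lemma~\ref{l:8} used to linearize, i.e.\ to avoid square roots) turns the displayed inequality into a concave quadratic in $\|\ell\|$ whose leading coefficient is a negative multiple of $1-\alpha$; the second side condition $\eta\le\alpha/((1-\alpha)\theta_{m+1})$ is precisely what places its vertex at or beyond $\theta_{m+1}$, so the maximum over $[0,\theta_{m+1}]$ is attained at $\theta_{m+1}$, and evaluating there yields exactly the scalar inequality in \ref{p:9iv}. The hypothesis $\alpha\ne1$ enters both here (for $\alpha=1$ the $\eta$-refinement is vacuous, the shift vector $2^{m+1}(1-\alpha)y_0$ degenerates, and one should invoke \ref{p:9iii} instead) and in keeping the admissible interval for $\eta$ well defined. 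The delicate part --- and essentially the only nontrivial step --- is this final bookkeeping: tracking which lower bound on $\scal{\ell}{y_0}$ is active, identifying the vector realizing the worst case in each norm, and checking that the correct branch of the quadratic is the one in force.
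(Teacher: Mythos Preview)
Your reduction and the handling of cases~\ref{p:9i}--\ref{p:9iii} are correct and essentially coincide with the paper's argument (the paper proves \ref{p:9i} directly at the level of $L_{m+1}$ rather than $M$, but your route via $M=0$ is equally valid).

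The gap is in case~\ref{p:9iv}. Two points.

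First, a consistency issue: after the reduction you announce that you will prove $\|\boldsymbol{M}\|_{\XXX,\YYY}\le 1$, i.e.\ bound $\|\boldsymbol{M}\boldsymbol{y}\|_{\YYY}$ for \emph{all} $\boldsymbol{y}$ with $\|y_0\|\le 1$, and then you invoke $\scal{Wy_0}{y_0}\ge\mu$, which needs $\|y_0\|=1$. The paper closes this by restricting to the product of unit \emph{spheres} $\boldsymbol{C}=\{\|y_0\|=\cdots=\|y_m\|=1\}$ and then observing that, since $\|\boldsymbol{M}\cdot\|_{\YYY}$ is convex and continuous, its supremum over the product of balls equals its supremum over $\boldsymbol{C}$. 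You need this step (or the equivalent observation that the specific $\boldsymbol{y}$ coming from your reduction already has $\|y_0\|=1$, in which case you should not phrase the goal as a full operator-norm bound).

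Second, and more seriously, what you write for general $\eta$ is an outline, not a proof, and the outline does not lead where you claim. Squaring $\|y_0-\eta\ell\|\le\|\Id-\eta W\|+\eta(\theta_{m+1}-\|W\|)$ produces a lower bound on $\scal{\ell}{y_0}$ that, when fed into $\alpha\|\ell\|-(1-\alpha)\scal{\ell}{y_0}$ and maximized at $\|\ell\|=\theta_{m+1}$, gives an inequality involving $\|\Id-\eta W\|^2$ (via $(1-\|\Id-\eta W\|^2)/(2\eta)$), not the linear combination $\|\Id-\eta W\|-\eta\|W\|$ appearing in~\ref{p:9iv}; nor does $\mu$ emerge from this route. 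The paper's argument is different: it writes $\scal{y_0}{u}=\eta\scal{Wy_0}{u}+\scal{(\Id-\eta W)y_0}{u}$, applies Lemma~\ref{l:8} to the pair $(Wy_0,u)$ to obtain
\[
\eta(1-\alpha)\big(\|Wy_0\|\,\|u\|-\scal{Wy_0}{u}\big)\le\alpha\big(\|Wy_0\|+\|u\|-\|Wy_0+u\|\big)
\]
(using $\eta(1-\alpha)(\|Wy_0\|+\|u\|)\le\alpha$, which is exactly where the bound $\eta\le\alpha/((1-\alpha)\theta_{m+1})$ enters), and then chains several triangle/Cauchy--Schwarz estimates to arrive precisely at the scalar inequality in~\ref{p:9iv}. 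Your ``concave quadratic with vertex at $\theta_{m+1}$'' narrative does not capture this mechanism, and the sentence ``with Lemma~\ref{l:8} used to linearize'' does not explain \emph{which} pair of vectors Lemma~\ref{l:8} is applied to or how the square roots disappear. As written, case~\ref{p:9iv} is not proved.
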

\begin{proof}
We use the operators $(L_i)_{1\leq i\leq m+1}$ introduced in 
Condition~\ref{o:1}. Per Notation~\ref{n:1} and \eqref{e:2.21e},
\begin{equation}
\label{e:fz70} 
\sup_{\substack{\boldsymbol{y}\in\XXX\\
{\underset{0\leq i\leq m}{\max}\|y_i\|\leq 1}}}
\frac{\|M\boldsymbol{y}-2^{m+1}(1-\alpha)y_0\|
+\|M\boldsymbol{y}\|}{2^{m+1}\alpha}
=\sup_{\substack{\boldsymbol{y}\in\XXX\\ 
\|\boldsymbol{y}\|_{\XXX}\leq 1}}
\|\boldsymbol{M}\boldsymbol{y}\|_{\YYY}= 
\|\boldsymbol{M}\|_{\XXX,\YYY}
\end{equation}
and therefore
\begin{equation}
\label{e:fz73}
(\forall\boldsymbol{y}\in\XXX)\quad
\max_{0\leq i\leq m}\|y_i\|\leq 1\quad\Rightarrow\quad
\|M\boldsymbol{y}-2^{m+1}(1-\alpha)y_0\|+
\|M\boldsymbol{y}\|\leq 2^{m+1}\alpha\|\boldsymbol{M}\|_{\XXX,\YYY}.
\end{equation}
Now let $\boldsymbol{x}\in\XXX$ be such that
\begin{equation} 
\label{e:fz71} 
(\forall i\in\{0,\ldots,m\})\quad\|x_i\|\leq
\begin{cases}
1,&\text{if}\;\;i=0;\\
\|L_i(x_0,\ldots,x_{i-1})\|,&\text{if}\;\;i\geq 1.
\end{cases}
\end{equation}

\ref{p:9i}:
We assume that $m\geq 1$. For every $k\in\{i,\ldots,m\}$, it
follows from \eqref{e:defthetaell} that $\theta_k=0$ and in turn
from Lemma~\ref{l:11}\ref{l:11ii} and \eqref{e:fz71} that $x_k=0$.
Therefore,
\begin{equation}
L_{m+1}\boldsymbol{x}=
\sum_{k=0}^m(W_{m+1}\circ\cdots\circ W_{k+1})x_k=
\sum_{k=0}^{i-1}(W_{m+1}\circ\cdots\circ W_{k+1})x_k=0,
\end{equation}
and \eqref{e:alphastabcondn} clearly holds.

\ref{p:9ii}:
In view of \ref{p:9i}, we assume that, if $m\geq 1$, 
$(\forall i\in\{1,\ldots,m\})$ $W_i\neq 0$. We then derive from
\eqref{e:defthetaell} that $(\forall i\in\{1,\ldots,m\})$ 
$\theta_i\geq\prod_{k=1}^i\|W_k\|>0$. 
If $x_0=0$, \eqref{e:alphastabcondn} trivially 
follows from Lemma~\ref{l:11}\ref{l:11ii}, we therefore assume
otherwise. Now set
\begin{equation}
\label{e:uivithetai}
(\forall i\in\{0,\ldots,m\})\quad y_i=
\dfrac{x_i}{\theta_i\|x_0\|}.
\end{equation}
According to Lemma~\ref{l:11}\ref{l:11ii}, 
$(\forall i\in\{0,\ldots,m\})$ $\|y_i\|\leq 1$.
On the other hand, it follows from \eqref{e:2.21d},
\eqref{e:uivithetai}, and \eqref{e:defLi} that 
$M\boldsymbol{y}=L_{m+1}\boldsymbol{x}/\|x_0\|$.
Altogether, we deduce from \eqref{e:fz73} that
\eqref{e:alphastabcondn} holds. 

\ref{p:9iii}$\Rightarrow$\ref{p:9ii}:
Take $\boldsymbol{y}\in\XXX$ such that 
$\|\boldsymbol{y}\|_{\XXX}\leq 1$. Then it follows from 
\eqref{e:2.21d} and 
Lemma~\ref{l:11}\ref{l:11i} that 
\begin{align}
&\hskip -6mm\|M\boldsymbol{y}-2^{m+1}(1-\alpha)y_0\|
+\|M\boldsymbol{y}\|\nonumber\\
&\leq\|W-2^{m+1}(1-\alpha)\Id\|\,
\|y_0\|+\|W\|\,\|y_0\|+2\sum_{i=1}^m\theta_i\|W_{m+1}\circ
\cdots\circ W_{i+1}\|\,\|y_i\|\nonumber\\
&\leq\|W-2^{m+1}(1-\alpha)\Id\|-\|W\|+2\theta_{m+1}\nonumber\\
&\leq 2^{m+1}\alpha.
\end{align}
In turn, \eqref{e:fz70} yields
$\|\boldsymbol{M}\|_{\XXX,\YYY}\leq 1$.

\ref{p:9iv}$\Rightarrow$\ref{p:9ii}:
Let $\boldsymbol{y}=(y_0,\ldots,y_m)\in\XXX$ be
such that $\|y_0\|=\cdots=\|y_m\|=1$, and set 
\begin{equation}
\label{e:fz86}
u=
\begin{cases}
\displaystyle\sum_{i=1}^m\theta_i(W_{m+1}\circ\cdots
\circ W_{i+1})y_i,&\text{if}\;\;m\neq 0;\\
0, &\text{if}\;\;m=0.
\end{cases} 
\end{equation}
The assumptions and \eqref{e:2.21b} imply that
\begin{equation}
\label{e:revereng}
\begin{cases}
\eta\theta_{m+1}\leq\alpha/(1-\alpha)\\
\theta_{m+1}\leq 2^{m+1}\alpha\\
\alpha\theta_{m+1}+(1-\alpha) (\|\Id-\eta W\|-\eta \|W\|)
(\theta_{m+1}-\|W\|)\\
\hskip 73mm \leq 2^m(2\alpha-1)+(1-\alpha)\scal{Wy_0}{y_0}.
\end{cases}
\end{equation}
On the other hand, 
\begin{align}
\label{e:fz62}
&\hskip -6mm
\alpha\|Wy_0 +u\big\|-(1-\alpha) \scal{y_0}{u}\nonumber\\
&=\alpha \|Wy_0+u\big\|
-(1-\alpha)\scal{\eta Wy_0+(\Id-\eta W)y_0}{u}\nonumber\\
&\leq\alpha\|Wy_0+u\big\|-\eta(1-\alpha)\scal{Wy_0}{u}
+(1-\alpha)\|(\Id-\eta W)y_0\|\,\|u\|.
\end{align}
Since, by Lemma~\ref{l:11}\ref{l:11i} and \eqref{e:revereng},
\begin{equation}
\label{e:mago}
\eta\sum_{i=0}^m\theta_i\|W_{m+1}\circ\cdots\circ W_{i+1}\|
=\eta\theta_{m+1}\leq\frac{\alpha}{1-\alpha},
\end{equation}
we deduce from \eqref{e:fz86} that
\begin{equation}
\label{e:Luwaaegen}
\eta(1-\alpha)(\|Wy_0\|+\|u\|)\leq\alpha.
\end{equation}
However, by Lemma~\ref{l:8},
\begin{equation}
\label{e:fz67}
\|Wy_0\|\,\|u\|-\scal{Wy_0}{u}\leq
(\|Wy_0\|+\|u\|-\|Wy_0+u\|)(\|Wy_0\|+\|u\|).
\end{equation}
In view of \eqref{e:Luwaaegen}, this yields 
\begin{equation}
\label{e:fz65}
\eta(1-\alpha)\big(\|Wy_0\|\,\|u\|-\scal{Wy_0}{u}\big)\leq
\alpha(\|Wy_0\|+\|u\|-\|Wy_0+u\|),
\end{equation}
that is,
\begin{equation}
\label{e:fz66}
\alpha\|Wy_0+u\|-\eta(1-\alpha)\scal{Wy_0}{u}\leq
\alpha(\|Wy_0\|+\|u\|)-\eta(1-\alpha)\|Wy_0\|\,\|u\|.
\end{equation}
Therefore, since \eqref{e:Luwaaegen} implies that 
$\alpha-\eta(1-\alpha)\|u\|\geq 0$, it results from
\eqref{e:fz62} that
\begin{align}
\label{e:fz89}
&\hskip -6mm\alpha\|Wy_0+u\big\|-(1-\alpha)\scal{y_0}{u}\nonumber\\
&\leq\alpha(\|Wy_0\|+\|u\|)-\eta(1-\alpha)\|Wy_0\|\,\|u\|
+(1-\alpha)\|(\Id-\eta W)y_0\|\,\|u\|\nonumber\\
&=\alpha\|u\|+(\alpha-\eta(1-\alpha)\|u\|)\|Wy_0\|
+(1-\alpha)\|(\Id-\eta W)y_0\|\,\|u\|\nonumber\\
&\leq\alpha\|u\|+(\alpha-\eta(1-\alpha)\|u\|)\|W\|
+(1-\alpha)\|(\Id-\eta W)y_0\|\,\|u\|\nonumber\\
&=\alpha\|W\|+\big(\alpha-\eta(1-\alpha)\|W\|\big)\|u\|
+(1-\alpha)\|\Id-\eta W\|\,\|u\|.
\end{align}
However, since \eqref{e:mago} implies that
$\alpha-\eta(1-\alpha)\|W\|\geq 0$, while 
\eqref{e:fz86} implies that $\|u\|\leq\theta_{m+1}-\|W\|$,
we derive from \eqref{e:fz89} that
\begin{align}
\label{e:fz77}
&\hskip -6mm\alpha\|Wy_0+u\big\|-(1-\alpha)\scal{y_0}{u}\nonumber\\
&\leq\alpha\|W\|+\big(\alpha-\eta(1-\alpha)\|W\|\big)
\big(\theta_{m+1}-\|W\|\big)+
(1-\alpha)\|\Id-\eta W\|\,(\theta_{m+1}-\|W\|).
\end{align}
We also have 
\begin{equation}
\label{e:fz74}
\|Wy_0+u\|\leq\|W\|+\|u\|\leq\theta_{m+1}.
\end{equation}
Hence, using \eqref{e:fz77}, \eqref{e:fz74}, \eqref{e:2.21d}, 
\eqref{e:2.21a}, and \eqref{e:2.21e} we obtain
\begin{align}  
\label{e:fz64}
\eqref{e:revereng}\quad 
&\Rightarrow\quad 
\begin{cases}
\|Wy_0+u\|\leq 2^{m+1}\alpha\\
\alpha\|W y_0+u\big\|-(1-\alpha) \scal{y_0}{W y_0+u}
\leq 2^m(2\alpha-1)
\end{cases}\nonumber\\
&\Leftrightarrow \quad
\begin{cases}
\|M\boldsymbol{y}\|\leq2^{m+1}\alpha\\
\alpha\|M\boldsymbol{y}\big\|-(1-\alpha)\scal{y_0}{M\boldsymbol{y}}
\leq 2^m\big(\alpha^{2}-(1-\alpha)^{2}\big)
\end{cases}\nonumber\\
&\Leftrightarrow \quad 
\begin{cases}
\|M\boldsymbol{y}\|\leq2^{m+1}\alpha\\
\big\|M\boldsymbol{y}-2^{m+1}(1-\alpha)y_0\|^2\leq
\big(2^{m+1}\alpha-\|M\boldsymbol{y}\big\|\big)^{2}
\end{cases}\nonumber\\
&\Leftrightarrow\quad\|M\boldsymbol{y}-2^{m+1}(1-\alpha)y_0\|
+\|M\boldsymbol{y}\|\leq 2^{m+1}\alpha\nonumber\\
&\Leftrightarrow\quad\|\boldsymbol{M}\boldsymbol{y}\|_{\YYY}\leq 1.
\end{align}
Now set $\boldsymbol{C}=\menge{\boldsymbol{y}\in\XXX}
{\|y_0\|=\cdots=\|y_m\|=1}$.
Then, in view of \eqref{e:fz70}, \eqref{e:fz64}, and
\cite[Proposition~11.1(ii)]{Livre1}, we conclude that 
$\|\boldsymbol{M}\|_{\XXX,\YYY}
=\sup_{\boldsymbol{y}\in\conv\boldsymbol{C}}
\|\boldsymbol{M}\boldsymbol{y}\|_{\YYY}
=\sup_{\boldsymbol{y}\in\boldsymbol{C}}
\|\boldsymbol{M}\boldsymbol{y}\|_{\YYY}
\leq 1$.
\end{proof}

The next result establishes a link between deep neural network 
structures and the operators introduced in \eqref{e:defLi}.

\begin{lemma}
\label{l:peniblegenb}
Let $m\geq 1$ be an integer and let 
$(\HH_i)_{0\leq i\leq m+1}$ be nonzero real Hilbert spaces. For
every $i\in\{1,\ldots,m+1\}$, let $W_i\in\BL(\HH_{i-1},\HH_i)$ and
let $L_i$ be as in \eqref{e:defLi}. Further, for every 
$i\in\{1,\ldots,m\}$, let $P_i\colon\HH_i\to\HH_i$ be firmly 
nonexpansive. Set 
\begin{equation}
\label{e:fz81}
T_m=W_{m+1}\circ P_m\circ W_m\circ\cdots\circ P_1\circ W_1, 
\end{equation}
let $x$ and $y$ be distinct points in $\HH_0$, and set
$v_0=(x-y)/\|x-y\|$. Then there exists 
$(v_1,\ldots,v_m)\in\HH_1\times\cdots\times\HH_m$
such that
\begin{equation}
\label{e:fz76}
\begin{cases}
(\forall i\in\{1,\ldots,m\})\quad\|v_i\|
\leq\|L_i(v_0,\ldots,v_{i-1})\|\\[2mm]
\dfrac{2^m (T_mx-T_my)}{\|x-y\|}=L_{m+1}(v_0,\ldots,v_m).
\end{cases}
\end{equation}
\end{lemma}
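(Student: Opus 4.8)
The plan is to unfold the two forward passes of $T_m$ issued from $x$ and from $y$, to follow the successive differences of the intermediate iterates, and to convert each firmly nonexpansive $P_i$ into an affine update by writing $P_i=\tfrac12(\Id+Q_i)$ with $Q_i:=2P_i-\Id$ nonexpansive (the standard reflection characterization of firm nonexpansiveness, see \cite[Proposition~4.4]{Livre1}).

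Concretely, I would introduce the propagated points $a_0=x$, $b_0=y$ and, for $i\in\{1,\dots,m\}$, $a_i=P_i(W_ia_{i-1})$, $b_i=P_i(W_ib_{i-1})$, so that $T_mx=W_{m+1}a_m$ and $T_my=W_{m+1}b_m$. Setting $d_i=a_i-b_i$ and, for $i\in\{1,\dots,m\}$, $q_i=Q_i(W_ia_{i-1})-Q_i(W_ib_{i-1})$, nonexpansiveness of $Q_i$ gives $\|q_i\|\leq\|W_id_{i-1}\|$, while $P_i=\tfrac12(\Id+Q_i)$ gives the recursion $2d_i=W_id_{i-1}+q_i$, with $d_0=x-y$.

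The next step is to solve this recursion in closed form by induction on $i$:
\begin{equation*}
2^{i}d_i=(W_i\circ\cdots\circ W_1)(x-y)+\sum_{k=1}^{i}2^{k-1}(W_i\circ\cdots\circ W_{k+1})q_k,\qquad i\in\{1,\dots,m\},
\end{equation*}
the inductive step being obtained by applying $W_i$ to the identity for $i-1$ and then adding $2^{i-1}q_i$; composing the case $i=m$ with $W_{m+1}$ yields the same identity with $2^{m}d_m$ replaced by $2^m(T_mx-T_my)$. I would then set $v_0=(x-y)/\|x-y\|$ and $v_k=2^{k-1}q_k/\|x-y\|$ for $k\in\{1,\dots,m\}$. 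Dividing the displayed identity (taken at index $i-1$) by $\|x-y\|$ and comparing with \eqref{e:defLi} shows $L_i(v_0,\dots,v_{i-1})=2^{i-1}W_i\big(d_{i-1}/\|x-y\|\big)$, hence $\|v_i\|=2^{i-1}\|q_i\|/\|x-y\|\leq 2^{i-1}\|W_id_{i-1}\|/\|x-y\|=\|L_i(v_0,\dots,v_{i-1})\|$; and the $W_{m+1}$-version gives $L_{m+1}(v_0,\dots,v_m)=2^m(T_mx-T_my)/\|x-y\|$, which is the second line of \eqref{e:fz76}.

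The argument is essentially bookkeeping; the only points requiring care are keeping the powers of $2$ and the order of the compositions straight in the induction, and recognizing that the partial sum appearing after the $k$-th step is exactly $L_{k+1}$ evaluated at $(v_0,\dots,v_k)$. There is no genuine conceptual obstacle once the reflection identity $2P_i=\Id+Q_i$ is in place.
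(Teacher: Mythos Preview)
Your proof is correct and follows essentially the same approach as the paper: both use the reflection decomposition $P_i=\tfrac12(\Id+Q_i)$ with $Q_i$ nonexpansive, and both define $v_k$ as $2^{k-1}/\|x-y\|$ times the difference of the $Q_k$-reflections at the $k$th stage (your $v_k=2^{k-1}q_k/\|x-y\|$ coincides exactly with the paper's inductive definition). The only organizational difference is that the paper inducts on the depth $m$ of the lemma itself, whereas you unroll the recursion $2d_i=W_id_{i-1}+q_i$ into a closed form and induct on the index $i$; the two arguments are equivalent bookkeeping of the same identity.
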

\begin{proof}
For every $i\in\{1,\ldots,m\}$, since $P_i$ is firmly nonexpansive, 
there exists a nonexpansive operator $Q_i\colon\HH_i\to\HH_i$ such 
that
\begin{equation}
\label{e:fz79}
P_i=\frac{\Id+Q_i}{2}.
\end{equation}
We proceed by induction on $m$. Suppose that $m=1$ and set
\begin{equation}
v_1=\frac{Q_1(W_1x)-Q_1(W_1y)}{\|x-y\|},
\end{equation}
which implies that
$\|v_1\|\leq{\|W_1(x-y)\|}/{\|x-y\|}=\|L_1 v_0\|$. Then
\begin{align}
2(T_1x-T_1y)
&=(W_2\circ W_1)(x-y)+(W_2\circ Q_1\circ W_1)x-
(W_2\circ Q_1\circ W_1)y\nonumber\\
&=\|x-y\|\big((W_2\circ W_1)v_0+W_2v_1)\big).
\end{align}
Thus, \eqref{e:fz76} holds for $m=1$. Next, we assume that $m>1$
and that there exists 
$(v_1,\ldots,v_{m-1})\in\HH_1\times\cdots\times\HH_{m-1}$ such that
\begin{equation}
\label{e:fz75}
\begin{cases}
(\forall i\in\{1,\ldots,m-1\})\quad\|v_i\|
\leq\|L_i(v_0,\ldots,v_{i-1})\|\\[2mm]
\dfrac{2^{m-1}
\big(T_{m-1}x-T_{m-1}y\big)}{\|x-y\|}=L_m(v_0,\ldots,v_{m-1}),
\end{cases}
\end{equation}
and we set
\begin{equation} 
\label{e:fz80}
v_m=\frac{2^{m-1}\big((Q_m\circ T_{m-1})x-
(Q_m\circ T_{m-1})y\big)}{\|x-y\|}.
\end{equation}
Then \eqref{e:fz81}, \eqref{e:fz79}, and \eqref{e:fz75} yield
\begin{align}
\label{e:formmagicdebgen}
&\hskip -7mm T_mx-T_my\nonumber\\
&=\frac{(W_{m+1}\circ T_{m-1})x-(W_{m+1}\circ T_{m-1})y}{2}
+\frac{(W_{m+1}\circ Q_m\circ T_{m-1})x-
(W_{m+1}\circ Q_m\circ T_{m-1})y}{2}\nonumber\\
&=\frac{\|x-y\|}{2^m}\big((W_{m+1}\circ L_m)(v_0,\ldots,v_{m-1})
+W_{m+1}v_m\big)\nonumber\\
&=\frac{\|x-y\|}{2^m}L_{m+1}(v_0,\ldots,v_m).
\end{align}
In addition, it follows from \eqref{e:fz75} and \eqref{e:fz80} that
\begin{equation}
\label{e:thetamineq}
\|v_m\| \leq\frac{2^{m-1} \|T_{m-1}x-
T_{m-1}y\|}{\|x-y\|}=\|L_m(v_0,\ldots,v_{m-1})\|,
\end{equation}
which completes the proof.
\end{proof}

We now establish connections between Condition~\ref{o:1} for
linear operators and the concept of averagedness for composite
nonlinear operators.

\begin{theorem}
\label{t:2}
Let $m\geq 1$ be an integer, let 
$(\HH_i)_{0\leq i\leq m-1}$ be nonzero real Hilbert spaces,
set $\HH_m=\HH_0$, and let $\alpha \in [1/2,1]$. For every 
$i\in\{1,\ldots,m\}$, let $W_i\in\BL(\HH_{i-1},\HH_i)$ and let 
$P_i\colon\HH_i\to\HH_i$ be firmly nonexpansive. Suppose that 
$(W_i)_{1\leq i\leq m}$ satisfies Condition~\ref{o:1}. Then 
$P_m\circ W_m\circ\cdots\circ P_1\circ W_1$ is 
$\alpha$-averaged.
\end{theorem}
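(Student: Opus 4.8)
The plan is as follows. Recall that $T\colon\HH_0\to\HH_0$ is $\alpha$-averaged if and only if $Q=(1-1/\alpha)\Id+(1/\alpha)T$ is nonexpansive, i.e.,
\[
(\forall x\in\HH_0)(\forall y\in\HH_0)\qquad
\|(Tx-Ty)-(1-\alpha)(x-y)\|\leq\alpha\|x-y\|
\]
(indeed $(1-\alpha)\Id+\alpha Q=T$ by construction, so this is immediate from the definition of averagedness). Writing $T=P_m\circ W_m\circ\cdots\circ P_1\circ W_1$, it therefore suffices to establish this inequality; it is trivial when $x=y$, so I would fix distinct $x,y$ in $\HH_0$ and set $v_0=(x-y)/\|x-y\|$, a unit vector.

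First I would reduce the problem to Lemma~\ref{l:peniblegenb}. Since $\HH_m=\HH_0$, one may set $\HH_{m+1}=\HH_0$ and $W_{m+1}=\Id\in\BL(\HH_m,\HH_{m+1})$, so that $W_{m+1}\circ P_m\circ W_m\circ\cdots\circ P_1\circ W_1=T$. Applying Lemma~\ref{l:peniblegenb} to $W_1,\ldots,W_{m+1}$ and to the firmly nonexpansive operators $P_1,\ldots,P_m$ then produces $(v_1,\ldots,v_m)\in\HH_1\times\cdots\times\HH_m$ with $\|v_i\|\leq\|L_i(v_0,\ldots,v_{i-1})\|$ for every $i\in\{1,\ldots,m\}$ and
\[
\frac{2^m(Tx-Ty)}{\|x-y\|}=L_{m+1}(v_0,\ldots,v_m),
\]
where the $L_i$ are the operators of \eqref{e:defLi} attached to $W_1,\ldots,W_{m+1}$. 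Because $W_{m+1}=\Id$, for $i\in\{1,\ldots,m\}$ the operator $L_i$ does not involve $W_{m+1}$ and therefore coincides with the operator of the same name attached to $(W_i)_{1\leq i\leq m}$, which is the one entering Condition~\ref{o:1} for that family; moreover $L_{m+1}(v_0,\ldots,v_m)=v_m+L_m(v_0,\ldots,v_{m-1})$.

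Next I would invoke the hypothesis. The bounds $\|v_0\|=1$ and $\|v_i\|\leq\|L_i(v_0,\ldots,v_{i-1})\|$ for $i\in\{1,\ldots,m-1\}$ say precisely that $(v_0,\ldots,v_{m-1})$ satisfies \eqref{e:alphastabcondh} for the family $(W_i)_{1\leq i\leq m}$ (with the integer ``$m$'' of Condition~\ref{o:1} equal to $m-1$). Since $(W_i)_{1\leq i\leq m}$ satisfies Condition~\ref{o:1}, \eqref{e:alphastabcondn} gives
\[
\|L_m(v_0,\ldots,v_{m-1})-2^m(1-\alpha)v_0\|+\|L_m(v_0,\ldots,v_{m-1})\|\leq 2^m\alpha.
\]
Multiplying the representation of $Tx-Ty$ above by $\|x-y\|/2^m$, subtracting $(1-\alpha)(x-y)=(1-\alpha)\|x-y\|v_0$, and then applying the triangle inequality together with $\|v_m\|\leq\|L_m(v_0,\ldots,v_{m-1})\|$ and the last displayed bound, I would obtain $\|(Tx-Ty)-(1-\alpha)(x-y)\|\leq(\|x-y\|/2^m)\cdot 2^m\alpha=\alpha\|x-y\|$. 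This is the required inequality, so $T$ is $\alpha$-averaged.

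The analytic heart of the matter --- the inductive expansion of the composite and its reorganization through the operators of \eqref{e:defLi} --- is already packaged in Lemma~\ref{l:peniblegenb}, so what remains is essentially one application of the triangle inequality. I expect the only point that needs care to be the index bookkeeping: Condition~\ref{o:1} is stated for $m+1$ weight operators with $\HH_{m+1}=\HH_0$, Lemma~\ref{l:peniblegenb} for a composite ending in the linear operator $W_{m+1}$, whereas here the composite ends in the proximity-type operator $P_m$ and $\HH_m=\HH_0$; the dummy weight $W_{m+1}=\Id$ is what reconciles these conventions. An equivalent route, avoiding this device, would be to peel off $P_m=(\Id+Q_m)/2$ with $Q_m$ nonexpansive, apply Lemma~\ref{l:peniblegenb} to $W_m\circ P_{m-1}\circ\cdots\circ P_1\circ W_1$ (treating $m=1$ separately, as the lemma needs at least one proximal factor), and take $v_m$ to be the suitably normalized increment of $Q_m$ along that composite; the resulting estimate is the same.
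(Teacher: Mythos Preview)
Your proof is correct and follows essentially the same route as the paper: both reduce $\alpha$-averagedness to the inequality $\|(Tx-Ty)-(1-\alpha)(x-y)\|\leq\alpha\|x-y\|$, invoke Lemma~\ref{l:peniblegenb} to express the relevant increment through the operators $L_i$, apply Condition~\ref{o:1} to the resulting vector $(v_0,\ldots,v_{m-1})$, and close with a single triangle inequality. The only difference is cosmetic: the paper peels off the last layer $P_m=(\Id+Q_m)/2$ by hand and uses the nonexpansiveness of $Q_m$ explicitly, whereas you absorb that step into the lemma via the dummy weight $W_{m+1}=\Id$, so that the bound $\|v_m\|\leq\|L_m(v_0,\ldots,v_{m-1})\|$ plays the same role. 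You even name the paper's variant as your ``equivalent route,'' and your packaging has the mild advantage of covering $m=1$ without a separate case.
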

\begin{proof}
Set $T=P_m\circ W_m\circ\cdots\circ P_1\circ W_1$. 
We must show that
\begin{equation}
\label{e:fz84}
Q=\bigg(1-\frac{1}{\alpha}\bigg)\Id+\frac{1}{\alpha}T
\end{equation}
is nonexpansive. By assumption, for every $i\in\{1,\ldots,m\}$, 
there exists a nonexpansive operator $Q_i\colon\HH_i\to\HH_i$ 
such that \eqref{e:fz79} holds.
Let $(L_i)_{1\leq i\leq m}$ be as in \eqref{e:defLi} and let $x$ 
and $y$ be distinct points in $\HH_0$. According to 
Lemma~\ref{l:peniblegenb}, there exists 
$\boldsymbol{v}=(v_0,\ldots,v_{m-1})\in\HH_0\times
\cdots\times\HH_{m-1}$ such that
\begin{equation}
\begin{cases}
\displaystyle v_0=\frac{x-y}{\|x-y\|}\\
(\forall i\in\{1,\ldots,m-1\})\quad \|v_i\|
\leq\|L_i(v_0,\ldots,v_{i-1})\|\\[2mm]
\dfrac{2^{m-1}\big((W_m\circ P_{m-1}\circ\cdots\circ P_1\circ
W_1)x-(W_m\circ P_{m-1}\cdots\circ P_1\circ W_1)y\big)}
{\|x-y\|}=L_m\boldsymbol{v}.
\end{cases}
\end{equation}
Condition~\ref{o:1} imposes that
\begin{equation}
\|L_m\boldsymbol{v}-2^m(1-\alpha)v_0\|+\|L_m\boldsymbol{v}\|
\leq 2^m\alpha\|v_0\|=2^m\alpha,
\end{equation}
which is equivalent to 
\begin{multline}
\label{e:fz82}
\|(W_m\circ P_{m-1}\circ\cdots\circ P_1\circ W_1)x-
(W_m\circ P_{m-1}\cdots\circ P_1\circ W_1)y-2(1-\alpha)(x-y)\|\\
+\|(W_m\circ P_{m-1}\circ\cdots\circ P_1\circ W_1)x-
(W_m\circ  P_{m-1}\cdots\circ P_1\circ W_1)y\|\leq 2\alpha\|x-y\|.
\end{multline}
In turn, we derive from \eqref{e:fz84} and \eqref{e:fz79} that 
\begin{align}
&\hskip -6mm \|Qx-Qy\|\nonumber\\ 
&\leq\frac{1}{\alpha}
\Big\|\Big(\frac{\Id+Q_m}{2}\circ W_m\circ\cdots\circ P_1\circ
W_1\Big)x-\Big(\frac{\Id+Q_m}{2}\circ W_m\circ  \cdots\circ P_1\circ
W_1\Big)y-(1-\alpha)(x-y)\Big\|\nonumber\\
&\leq \frac{1}{2\alpha}
\Big(\|(W_m\circ P_{m-1}\circ\cdots\circ P_1\circ W_1)x-(W_m\circ
P_{m-1}\cdots\circ P_1\circ W_1)y-2(1-\alpha)(x-y)\|\nonumber\\
&\quad\;+\|(Q_m\circ W_m\circ P_{m-1}\circ\cdots\circ 
P_1\circ W_1)x-(Q_m\circ W_m\circ P_{m-1}\cdots\circ 
P_1\circ W_1)y\|\Big)\nonumber\\
&\leq \frac{1}{2\alpha}
\Big(\|(W_m\circ P_{m-1}\circ\cdots\circ P_1\circ W_1)x-(W_m\circ
P_{m-1}\cdots\circ P_1\circ W_1)y
-2(1-\alpha)(x-y)\|\nonumber\\
&\quad\;+\|(W_m\circ P_{m-1}\circ\cdots\circ P_1\circ W_1)x
-(W_m\circ P_{m-1}\cdots\circ P_1\circ W_1)y\|\Big)\nonumber\\
&\leq\|x-y\|,
\end{align}
which establishes the nonexpansiveness of $Q$.
\end{proof}

\begin{example}
Consider Theorem~\ref{t:2} with $m=2$. In view of 
Proposition~\ref{p:9}\ref{p:9iii}, 
$P_2\circ W_2\circ P_1\circ W_1$ is $\alpha$-averaged if 
$\|W_2\circ W_1-4(1-\alpha)\Id\|+\|W_2\circ W_1\|+2\|W_2\|\,\|W_1\|
\leq 4\alpha$. In particular, if $\alpha=1$, this condition is
obviously less restrictive than requiring that $W_1$ and $W_2$ be
nonexpansive.
\end{example}

\section{A variational inequality model}
\label{sec:4}

In this section, we first investigate an autonomous version of
Model~\ref{m:2}.

\begin{model}
\label{m:1}
This is the special case of Model~\ref{m:2} in which, for
every $i\in\{1,\ldots,m\}$, there exist 
$R_i\in\mathcal{A}(\HH_i)$, say $R_i=\prox_{\varphi_i}$ 
for some $\varphi_i\in\Gamma_0(\HH_i)$ with
$\varphi_i(0)=\inf\varphi_i(\HH_i)$, 
$W_i\in\BL(\HH_{i-1},\HH_i)$, and $b_i\in\HH_i$ such that
$(\forall n\in\NN)$ $R_{i,n}=R_i$, $W_{i,n}=W_i$,
$b_{i,n}=b_i$. We set 
\begin{equation}
\label{e:sj1}
(\forall i\in\{1,\ldots,m\})\quad
T_i\colon\HH_{i-1}\to\HH_i\colon x\mapsto R_i(W_ix+b_i)
\end{equation} 
and
\begin{equation}
\label{e:F}
\begin{cases}
F=\Fix(T_m\circ\cdots\circ T_1)\\
\HHH=\HH_1\oplus\cdots\oplus\HH_{m-1}\oplus\HH_m\\
\overset{\rightarrow}{\HHH}=
\HH_m\oplus\HH_1\oplus\cdots\oplus\HH_{m-1}\\
\boldsymbol{S}\colon\HHH\to\overset{\rightarrow}{\HHH}\colon
(x_1,\ldots,x_{m-1},x_m)\mapsto(x_m,x_1,\ldots,x_{m-1})\\
\boldsymbol{W}\colon\overset{\rightarrow}{\HHH}\to\HHH\colon
(x_m,x_1,\ldots,x_m)\mapsto(W_1x_m,W_2x_1,\ldots,W_mx_{m-1})\\
\boldsymbol{\varphi}\colon\HHH\to\RX\colon\boldsymbol{x}
\mapsto\sum_{i=1}^m\varphi_i(x_i)\\
\boldsymbol{\psi}\colon\HHH\to\RX\colon\boldsymbol{x}
\mapsto\sum_{i=1}^m\big(\varphi_i(x_i)-\scal{x_i}{b_i}\big)\\
\boldsymbol{F}=
\menge{\boldsymbol{x}\in\HHH}
{x_1=T_1x_m,\;x_2=T_2x_1,\ldots,\;x_m=T_mx_{m-1}},
\end{cases}
\end{equation}
where $\boldsymbol{x}=(x_1,\ldots,x_m)$ denotes a generic 
element in $\HHH$.
\end{model}

\subsection{Static analysis}

We start with a property of the compositions of the operators 
$(T_i)_{1\leq i\leq m}$ of \eqref{e:sj1}.

\begin{proposition}
\label{p:boundcompTib}
Consider the setting of Model~\ref{m:1}, let $i$ and $j$ be
integers such that $1\leq j\leq i\leq m$, and let $x\in\HH_{j-1}$. 
Then 
\begin{equation}
\label{e:fz69}
\|(T_i\circ\cdots\circ T_j)x\|\leq\|x\|
\prod_{k=j}^{i}\|W_k\|
+\sum_{q=j}^{i}\Bigg(\|b_q\|\prod_{k=q+1}^{i}\|W_k\|\Bigg).
\end{equation}
\end{proposition}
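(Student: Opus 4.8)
The plan is to argue by induction on $i$ with $j$ held fixed, using only two ingredients: the triangle inequality together with the operator-norm bound for each $W_k$, and the fact that every activation operator $R_k$ is nonexpansive and fixes the origin. Since each $R_i=\prox_{\varphi_i}$ lies in $\AL(\HH_i)$ by the hypotheses of Model~\ref{m:1}, Proposition~\ref{p:5}\ref{p:5ii} applies.

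First I would record the one-step estimate. For $z\in\HH_i$ we have $\|R_iz\|\leq\|z\|$ by Proposition~\ref{p:5}\ref{p:5ii}, hence for $x$ in the relevant domain
\[
\|T_ix\|=\|R_i(W_ix+b_i)\|\leq\|W_ix+b_i\|\leq\|W_i\|\,\|x\|+\|b_i\|.
\]
For the base case $i=j$ this is already \eqref{e:fz69}, since $\prod_{k=j}^{j}\|W_k\|=\|W_j\|$ and $\prod_{k=j+1}^{j}\|W_k\|=1$ under the usual empty-product convention.

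For the inductive step I would write $(T_i\circ\cdots\circ T_j)x=T_i\big((T_{i-1}\circ\cdots\circ T_j)x\big)$, apply the one-step estimate with $x$ replaced by $(T_{i-1}\circ\cdots\circ T_j)x$, and then substitute the induction hypothesis for $\|(T_{i-1}\circ\cdots\circ T_j)x\|$. Distributing the factor $\|W_i\|$ across the product turns $\prod_{k=j}^{i-1}\|W_k\|$ into $\prod_{k=j}^{i}\|W_k\|$ and each $\prod_{k=q+1}^{i-1}\|W_k\|$ into $\prod_{k=q+1}^{i}\|W_k\|$, while the leftover additive term $\|b_i\|$ is exactly the $q=i$ summand $\|b_i\|\prod_{k=i+1}^{i}\|W_k\|$; collecting these terms yields the right-hand side of \eqref{e:fz69} with upper index $i$, which closes the induction.

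There is no genuine obstacle here: the estimate is a direct consequence of nonexpansiveness of the $R_i$'s and submultiplicativity of the operator norm. The only point requiring a little care is the bookkeeping of the index ranges of the products and the consistent use of the empty-product convention, in particular when absorbing $\|b_i\|$ as the $q=i$ term.
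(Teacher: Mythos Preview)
Your proof is correct and matches the paper's approach: both argue by induction on $i$ using the one-step estimate $\|T_iy\|\leq\|W_i\|\,\|y\|+\|b_i\|$, which follows from $R_i0=0$ and nonexpansiveness of $R_i$ (you cite Proposition~\ref{p:5}\ref{p:5ii} directly, while the paper invokes Proposition~\ref{p:5}\ref{p:5i-} plus nonexpansiveness, which amounts to the same thing).
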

\begin{proof}
In view of \eqref{e:sj1}, the property is satisfied when $i=j$.
We now assume that $i>j$. Since $R_i\in\mathcal{A}(\HH_i)$,
Proposition~\ref{p:5}\ref{p:5i-} yields
\begin{align}
\|(T_i\circ\cdots\circ T_j)x\|
&=\|R_i(W_i(T_{i-1}\circ\cdots\circ T_j)x+b_i)\|\nonumber\\
&=\|R_i(W_i(T_{i-1}\circ\cdots\circ T_j)x+b_i)-R_i0\|
\nonumber\\
&\leq\|W_i(T_{i-1}\circ\cdots\circ T_j)x+b_i\|\nonumber\\
&\leq\|W_i\|\,\|(T_{i-1}\circ\cdots\circ T_j)x\|+\|b_i\|.
\end{align}
We thus obtain \eqref{e:fz69} recursively. 
\end{proof}

Next, we establish a connection between Model~\ref{m:1} and a
variational inequality. 
\begin{proposition}
\label{p:10}
In the setting of Model~\ref{m:1}, consider the variational
inequality problem
\begin{equation}
\label{e:vi1}
\text{find}\;\;\overline{x}_1\in\HH_1,\ldots,\,
\overline{x}_m\in\HH_m
\;\;\text{such that}\quad
\begin{cases}
b_1\in
\overline{x}_1-W_1\overline{x}_m+\partial
\varphi_1(\overline{x}_1)\\
b_2\in\overline{x}_2-W_2\overline{x}_1+
\partial\varphi_2(\overline{x}_2)\\
\hskip 6mm\vdots\\
b_m\in\overline{x}_m-W_m\overline{x}_{m-1}+
\partial\varphi_m(\overline{x}_m).
\end{cases}
\end{equation}
Then the following hold:
\begin{enumerate}
\itemsep0mm 
\item
\label{p:10i}
The set of solutions to \eqref{e:vi1} is $\boldsymbol{F}$.
\item
\label{p:10ii}
$\boldsymbol{F}=\zer(\ID-\boldsymbol{W}\circ\boldsymbol{S}+
\partial\boldsymbol{\psi})=
\Fix(\prox_{\boldsymbol{\psi}}\circ\boldsymbol{W}
\circ\boldsymbol{S})$.
\item
\label{p:10iv}
$\boldsymbol{F}=\menge{(T_1\overline{x}_m,(T_2\circ T_1)
\overline{x}_m,\ldots, (T_{m-1}\circ\cdots\circ T_1)
\overline{x}_m,\overline{x}_m)}{\overline{x}_m\in F}$.
\item
\label{p:10iii}
Suppose that $(W_i)_{1\leq i\leq m}$ satisfies 
Condition~\ref{o:1} for some $\alpha\in [1/2,1]$.
Then $F$ is closed and convex. 
\item
\label{p:10iii+}
Suppose that $(W_i)_{1\leq i \leq m}$
satisfies Condition~\ref{o:1} for some $\alpha\in [1/2,1]$
and that one of the following holds:
\begin{enumerate}[label=\rm(\alph*)]
\itemsep0mm 
\item 
\label{p:10iii+a}
$\ran(T_m\circ\cdots\circ T_1)$ is bounded.
\item 
\label{p:10iii+b}
There exists $j\in\{1,\ldots,m\}$ such that $\dom\varphi_j$ 
is bounded. 
\end{enumerate}
Then $F$ and $\boldsymbol{F}$ are nonempty.
\item
\label{p:10v}
Suppose that $\ID-\boldsymbol{W}\circ\boldsymbol{S}$ is monotone.
Then $\boldsymbol{F}$ is closed and convex. In addition, $F$ and
$\boldsymbol{F}$ are nonempty if any of the following holds:
\begin{enumerate}[label=\rm(\alph*)]
\itemsep0mm 
\item
\label{p:10via}
$\ID-\boldsymbol{W}\circ\boldsymbol{S}+
\partial\boldsymbol{\varphi}$ is surjective.
\item
\label{p:10vib}
$\partial\boldsymbol{\varphi}-\boldsymbol{W}\circ\boldsymbol{S}$ 
is maximally monotone.
\item
\label{p:10vic}
$\max_{1\leq i\leq m}\|W_i\|\leq 1$,
$\boldsymbol{S}^*-\boldsymbol{W}$ has closed range, and 
$\ker(\boldsymbol{S}-\boldsymbol{W}^*)=\{\boldsymbol{0}\}$.
\item
\label{p:10vid}
$\max_{1\leq i\leq m}\|W_i\|\leq 1$ and, for every 
$i\in\{1,\ldots,m\}$, $\dom\varphi_i^*=\HH_i$.
\item
\label{p:10vie}
For every $i\in\{1,\ldots,m\}$,
$\dom\varphi_i=\HH$ and $\dom\varphi_i^*=\HH_i$.
\item
\label{p:10vig}
$\boldsymbol{S}^*-\boldsymbol{W}$ has closed range, 
$\ker(\boldsymbol{S}-\boldsymbol{W}^*)=\{\boldsymbol{0}\}$,
and, for every $i\in\{1,\ldots,m\}$, $\dom\varphi_i=\HH_i$.
\item
\label{p:10vif}
For every $i\in\{1,\ldots,m\}$, $\dom\varphi_i$ is bounded.
\end{enumerate}
\end{enumerate}
\end{proposition}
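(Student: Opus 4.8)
The plan is to handle the descriptive parts \ref{p:10i}--\ref{p:10iii} by direct resolvent manipulation and then concentrate on the existence statements, which carry the real weight. For \ref{p:10i}, I would rewrite each inclusion in \eqref{e:vi1} as $W_i\overline{x}_{i-1}+b_i\in(\Id+\partial\varphi_i)(\overline{x}_i)$ (cyclically, with $\overline{x}_0=\overline{x}_m$) and invert it using $(\Id+\partial\varphi_i)^{-1}=\prox_{\varphi_i}=R_i$, which turns the system into $\overline{x}_i=R_i(W_i\overline{x}_{i-1}+b_i)=T_i\overline{x}_{i-1}$, i.e. into the system defining $\boldsymbol{F}$. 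For \ref{p:10ii}, the same identity applied to $\boldsymbol{\psi}$ — whose subdifferential is the separable map $\boldsymbol{x}\mapsto(\partial\varphi_i(x_i)-b_i)_{1\leq i\leq m}$ — shows that $\boldsymbol{x}\in\zer(\ID-\boldsymbol{W}\circ\boldsymbol{S}+\partial\boldsymbol{\psi})$ is equivalent to $W_ix_{i-1}+b_i\in(\Id+\partial\varphi_i)(x_i)$ for every $i$, hence to $\boldsymbol{x}\in\boldsymbol{F}$; and $\zer(\ID-\boldsymbol{W}\circ\boldsymbol{S}+\partial\boldsymbol{\psi})=\Fix(\prox_{\boldsymbol{\psi}}\circ\boldsymbol{W}\circ\boldsymbol{S})$ is immediate from $\prox_{\boldsymbol{\psi}}=(\ID+\partial\boldsymbol{\psi})^{-1}$. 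For \ref{p:10iv}, chaining $x_i=T_ix_{i-1}$ in an element of $\boldsymbol{F}$ gives $x_m=(T_m\circ\cdots\circ T_1)x_m$, so $x_m\in F$ and $x_i=(T_i\circ\cdots\circ T_1)x_m$, while conversely any $\overline{x}_m\in F$ generates such a tuple. For \ref{p:10iii}, I would write $T_i=P_i\circ W_i$ with $P_i\colon y\mapsto\prox_{\varphi_i}(y+b_i)$, which is firmly nonexpansive (the cocoercivity inequality for $\prox_{\varphi_i}$ survives the inner translation), so that $T_m\circ\cdots\circ T_1=P_m\circ W_m\circ\cdots\circ P_1\circ W_1$; Theorem~\ref{t:2} then makes this composition $\alpha$-averaged, hence nonexpansive, and the fixed point set of a nonexpansive self-map of a Hilbert space is closed and convex \cite{Livre1}.

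For \ref{p:10iii+}, I would first reduce \ref{p:10iii+b} to \ref{p:10iii+a}: if $\dom\varphi_j$ is bounded then $\ran R_j$ is bounded by Proposition~\ref{p:5}\ref{p:5iii}, hence $\ran T_j\subseteq\ran R_j$ is bounded, and since $T_m\circ\cdots\circ T_{j+1}$ is Lipschitzian (a composition of nonexpansive proximity operators and affine maps) it maps that bounded set to a bounded set, so $\ran(T_m\circ\cdots\circ T_1)$ is bounded. Under \ref{p:10iii+a}, $T:=T_m\circ\cdots\circ T_1$ is nonexpansive (as in \ref{p:10iii}) and its range is bounded, so choosing a closed ball $C\supseteq\ran T$ gives $T(C)\subseteq C$ and the Browder--G\"ohde--Kirk theorem yields $\Fix T\neq\emp$, i.e. $F\neq\emp$; then $\boldsymbol{F}\neq\emp$ by \ref{p:10iv}.

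For \ref{p:10v}, the operator $\ID-\boldsymbol{W}\circ\boldsymbol{S}$ is bounded linear with full domain and, by hypothesis, monotone, hence maximally monotone; $\partial\boldsymbol{\psi}$ is maximally monotone, and since one summand has full domain so is $\ID-\boldsymbol{W}\circ\boldsymbol{S}+\partial\boldsymbol{\psi}$, whence its zero set $\boldsymbol{F}$ (by \ref{p:10ii}) is closed and convex. For nonemptiness, by \ref{p:10ii} and $\partial\boldsymbol{\psi}=\partial\boldsymbol{\varphi}-(b_1,\ldots,b_m)$ it suffices to prove that $A:=\ID-\boldsymbol{W}\circ\boldsymbol{S}+\partial\boldsymbol{\varphi}$ is surjective (then $\boldsymbol{F}\neq\emp$, and $F\neq\emp$ by \ref{p:10iv}). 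Case \ref{p:10via} is this hypothesis. Case \ref{p:10vib}: $A=\ID+(\partial\boldsymbol{\varphi}-\boldsymbol{W}\circ\boldsymbol{S})$, so Minty's theorem gives surjectivity. Cases \ref{p:10vid} and \ref{p:10vie}: $\dom\varphi_i^*=\HH_i$ for every $i$ makes $\boldsymbol{\varphi}$ supercoercive, and for $\boldsymbol{u}=(\ID-\boldsymbol{W}\circ\boldsymbol{S})\boldsymbol{x}+\boldsymbol{g}\in A\boldsymbol{x}$ with $\boldsymbol{g}\in\partial\boldsymbol{\varphi}(\boldsymbol{x})$, monotonicity of $\ID-\boldsymbol{W}\circ\boldsymbol{S}$ and the subgradient inequality give $\scal{\boldsymbol{u}}{\boldsymbol{x}}\geq\scal{\boldsymbol{g}}{\boldsymbol{x}}\geq\boldsymbol{\varphi}(\boldsymbol{x})-\boldsymbol{\varphi}(\boldsymbol{0})$, so $A$ is coercive and hence surjective. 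Case \ref{p:10vif}: $\dom\boldsymbol{\varphi}$ is bounded, so $\dom A=\dom\partial\boldsymbol{\varphi}$ is bounded, and a maximally monotone operator with bounded domain is surjective (for each $\boldsymbol{v}$ the resolvent of $A-\boldsymbol{v}$ is firmly nonexpansive with range in $\dom A$, hence has a fixed point). Cases \ref{p:10vic} and \ref{p:10vig}: ``$\boldsymbol{S}^*-\boldsymbol{W}$ has closed range'' together with ``$\ker(\boldsymbol{S}-\boldsymbol{W}^*)=\{\boldsymbol{0}\}$'' means $\boldsymbol{S}^*-\boldsymbol{W}$ is surjective, hence so is $\ID-\boldsymbol{W}\circ\boldsymbol{S}=(\boldsymbol{S}^*-\boldsymbol{W})\circ\boldsymbol{S}$ since $\boldsymbol{S}$ is an orthogonal bijection; in \ref{p:10vic} the extra condition $\max_i\|W_i\|\leq1$ makes $\boldsymbol{W}\circ\boldsymbol{S}$ nonexpansive, so $\ID-\boldsymbol{W}\circ\boldsymbol{S}$ is cocoercive and thus $3^*$-monotone, and since $\partial\boldsymbol{\varphi}$ is $3^*$-monotone the Brezis--Haraux theorem gives $\inte\ran A=\inte\big(\ran(\ID-\boldsymbol{W}\circ\boldsymbol{S})+\ran\partial\boldsymbol{\varphi}\big)=\HHH$; in \ref{p:10vig} the extra condition $\dom\varphi_i=\HH_i$ for every $i$ makes $\partial\boldsymbol{\varphi}$ of full domain, and surjectivity of $A$ is obtained from the corresponding perturbation result for maximally monotone operators.

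The main obstacle I anticipate is in \ref{p:10v}, and within it in correctly matching each constraint qualification \ref{p:10vic}--\ref{p:10vig} to its surjectivity or maximal-monotonicity tool: the coercivity cases \ref{p:10vid}, \ref{p:10vie} and the bounded-domain case \ref{p:10vif} are clean, and \ref{p:10vic} goes through Brezis--Haraux via cocoercivity, but the item \ref{p:10vig}, where surjectivity of the non-self-adjoint linear part $\ID-\boldsymbol{W}\circ\boldsymbol{S}$ has to be combined with a merely full-domain subdifferential (so that neither Brezis--Haraux nor a coercivity argument applies directly), is the step whose precise proof I would expect to require the most care. Everything else reduces to routine resolvent identities and the averagedness statement of Theorem~\ref{t:2}.
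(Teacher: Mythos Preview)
Your treatment of \ref{p:10i}--\ref{p:10iii+} and of \ref{p:10v}\ref{p:10via}, \ref{p:10vib}, \ref{p:10vic}, \ref{p:10vif} is essentially the paper's. The genuine difference is your handling of \ref{p:10v}\ref{p:10vid} and \ref{p:10vie}. The paper stays within the $3^*$-monotone/Brezis--Haraux framework: for \ref{p:10vid} it uses $\max_i\|W_i\|\leq 1$ to make $\ID-\boldsymbol{W}\circ\boldsymbol{S}$ cocoercive (hence $3^*$-monotone) and $\dom\boldsymbol{\varphi}^*=\HHH$ to make $\partial\boldsymbol{\varphi}$ surjective, then applies \cite[Corollary~25.27(i)]{Livre1}; for \ref{p:10vie} it swaps roles, using $\dom\partial\boldsymbol{\varphi}=\HHH$ and the $3^*$-monotonicity of $\partial\boldsymbol{\varphi}$ via \cite[Corollary~25.27(ii)]{Livre1}. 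Your coercivity route is different and cleaner: $\dom\boldsymbol{\varphi}^*=\HHH$ makes $\boldsymbol{\varphi}$ supercoercive, and then $\scal{\boldsymbol{u}}{\boldsymbol{x}}\geq\boldsymbol{\varphi}(\boldsymbol{x})-\boldsymbol{\varphi}(\boldsymbol{0})$ for every $\boldsymbol{u}\in A\boldsymbol{x}$ gives surjectivity of the maximally monotone $A$ directly. This single argument covers both \ref{p:10vid} and \ref{p:10vie}, and in \ref{p:10vid} it does not even use the bound $\max_i\|W_i\|\leq 1$. What the paper's approach buys is uniformity: the same $3^*$-monotone machinery drives \ref{p:10vic}, \ref{p:10vid}, \ref{p:10vie}, and \ref{p:10vig}.

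Regarding \ref{p:10vig}, which you correctly flag as the delicate item: the ``perturbation result'' you are reaching for is precisely \cite[Corollary~25.27(ii)]{Livre1}. Here $\ID-\boldsymbol{W}\circ\boldsymbol{S}$ is only known to be monotone (no norm bound, so no cocoercivity), but it is surjective by your closed-range/kernel argument; $\partial\boldsymbol{\varphi}$ is $3^*$-monotone (as any subdifferential is) and has full domain by hypothesis; the sum is maximally monotone; and these are exactly the hypotheses of that corollary, yielding surjectivity of $A$. So your outline is on target, and the missing ingredient is just naming the $3^*$-monotonicity of $\partial\boldsymbol{\varphi}$ and the specific corollary.
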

\begin{proof}
We first observe that 
$\boldsymbol{S}\in\BL(\HHH,\overset{\rightarrow}{\HHH})$,
$\boldsymbol{W}\in\BL(\overset{\rightarrow}{\HHH},\HHH)$,  
$\boldsymbol{\varphi}\in\Gamma_0(\HHH)$, and
$\boldsymbol{\psi}\in\Gamma_0(\HHH)$.

\ref{p:10i}: Let $\boldsymbol{x}\in\HHH$. Then
\begin{eqnarray}
\label{e:via2}
\boldsymbol{x}\;\text{solves \eqref{e:vi1}}
&\Leftrightarrow&
\begin{cases}
W_1x_m+b_1\in x_1+\partial\varphi_1(x_1)\\
W_2x_1+b_2\in x_2+\partial\varphi_2(x_2)\\
\hskip 6mm\vdots\\
W_mx_{m-1}+b_m\in x_m+\partial\varphi_m(x_m).
\end{cases}\\
&\Leftrightarrow&
\begin{cases}
x_1=\prox_{\varphi_1}(W_1x_m+b_1)=T_1x_m\\
x_2=\prox_{\varphi_2}(W_2x_1+b_2)=T_2x_1\\
\hskip 6mm\vdots\\
x_m=\prox_{\varphi_m}(W_mx_{m-1}+b_m)=T_mx_{m-1}.
\end{cases}
\end{eqnarray}

\ref{p:10ii}: Let $\boldsymbol{x}\in\HHH$. Using \eqref{e:F}, 
we obtain 
\begin{equation}
\label{e:via1}
\boldsymbol{x}\;\text{solves \eqref{e:vi1}}
\quad\Leftrightarrow\quad
\boldsymbol{0}\in\boldsymbol{x}-
\boldsymbol{W}(\boldsymbol{S}\boldsymbol{x})+
\partial\boldsymbol{\psi}(\boldsymbol{x})
\quad\Leftrightarrow\quad
\boldsymbol{x}=\prox_{\boldsymbol{\psi}}
\big(\boldsymbol{W}(\boldsymbol{S}\boldsymbol{x})\big).
\end{equation}

\ref{p:10iv}: Clear from the definitions of $F$ and
$\boldsymbol{F}$.

\ref{p:10iii}: Define $m$ firmly nonexpansive operators by
$(\forall i\in\{1,\ldots,m\})$ 
$P_i\colon\HH_i\to\HH_i\colon y\mapsto R_i(y+b_i)$.
Then it follows from \eqref{e:sj1} and Theorem~\ref{t:2} applied 
to $(P_i)_{1\leq i\leq m}$ that $T_m\circ\cdots\circ T_1$ is 
nonexpansive. In turn, we derive from \cite[Corollary~4.24]{Livre1} 
that its fixed point set $F$ is closed and convex. 

\ref{p:10iii+}: Thanks to \ref{p:10iv}, it is enough to show that
$F\neq\emp$. Set $T=T_m\circ\cdots\circ T_1$ and recall that
it is nonexpansive by virtue of Theorem~\ref{t:2}.

\ref{p:10iii+a}: 
Let $C$ be a closed ball such that $\ran T\subset C$ and set
$S=T|_C$. Then $S\colon C\to C$ is nonexpansive and therefore
\cite[Proposition~4.29]{Livre1} asserts that 
$\Fix T=\Fix S\neq\emp$.

\ref{p:10iii+b}$\Rightarrow$\ref{p:10iii+a}:
We have $\ran T_j\subset\ran R_j
=\ran\prox_{\varphi_j}
=\dom(\Id+\partial\varphi_j)
=\dom\partial\varphi_j\subset\dom\varphi_{j}$. Hence $\ran T_j$ is
bounded and Proposition~\ref{p:boundcompTib} 
(with $i=m$) implies that 
\begin{equation}
\ran T \subset 
\begin{cases}
\ran T_m, & \text{if}\;\;j=m;\\
(T_m\circ\cdots\circ T_{j+1})(\ran T_j), 
& \text{if}\;\;1\leq j\leq m-1
\end{cases}
\end{equation}
is likewise. 

\ref{p:10v}: Set $\boldsymbol{A}=\ID-\boldsymbol{W}\circ
\boldsymbol{S}+\partial\boldsymbol{\psi}$. Since
$\ID-\boldsymbol{W}\circ\boldsymbol{S}$ is 
monotone and continuous, it is maximally monotone
\cite[Corollary~20.28]{Livre1}, with 
$\HHH$ as its domain. Since $\partial\boldsymbol{\psi}$ is also
maximally monotone \cite[Theorem~20.25]{Livre1}, $\boldsymbol{A}$ 
is likewise \cite[Corollary~25.5(i)]{Livre1} 
and hence $\boldsymbol{F}=\zer\boldsymbol{A}$ is closed and convex
\cite[Proposition~23.39]{Livre1}.
Next, we note that, in view of \ref{p:10iv}, $F\neq\emp$ 
$\Leftrightarrow$ $\boldsymbol{F}\neq\emp$.

\ref{p:10via}: The hypothesis implies that
$(b_i)_{1\leq i\leq m}\in\ran(\ID-\boldsymbol{W}\circ
\boldsymbol{S}+\partial\boldsymbol{\varphi})$ and therefore 
that \eqref{e:vi1} has a solution, i.e., $\boldsymbol{F}\neq\emp$.

\ref{p:10vib}$\Rightarrow$\ref{p:10via}: 
The claim follows from Minty's theorem \cite[Theorem~21.1]{Livre1}.

\ref{p:10vic}$\Rightarrow$\ref{p:10via}: 
We have $\|\boldsymbol{W}\circ\boldsymbol{S}\|=\|\boldsymbol{W}\|
=\max_{1\leq i\leq m}\|W_i\|\leq 1$. Therefore,
$-\boldsymbol{W}\circ\boldsymbol{S}$ is nonexpansive, which 
implies that
$(\ID-\boldsymbol{W}\circ\boldsymbol{S})/2$ is firmly
nonexpansive \cite[Corollary~4.5]{Livre1}, that is 
$(\forall\boldsymbol{x}\in\HHH)$
$\scal{\boldsymbol{x}-\boldsymbol{W}(\boldsymbol{S}\boldsymbol{x})}
{\boldsymbol{x}}\geq\|\boldsymbol{x}-
\boldsymbol{W}(\boldsymbol{S}\boldsymbol{x})\|^2/2$. Consequently,
$\ID-\boldsymbol{W}\circ\boldsymbol{S}$ is $3^*$~monotone
\cite[Proposition~25.16]{Livre1}, while
$\partial\boldsymbol{\varphi}$ is also $3^*$~monotone 
\cite[Example~25.13]{Livre1}.
Finally, since $\boldsymbol{S}$ is unitary,
\begin{equation}
\ran\big(\ID-\boldsymbol{W}\circ\boldsymbol{S}\big)
=\ran\big(\boldsymbol{S}^*-\boldsymbol{W}\big)
=\cran\big(\boldsymbol{S}-\boldsymbol{W}^*\big)^*
=\Big(\ker\big(\boldsymbol{S}-\boldsymbol{W}^*\big)\Big)^\bot
=\HHH,
\end{equation}
which shows that
$\ID-\boldsymbol{W}\circ\boldsymbol{S}$ is surjective.
Altogether, since \cite[Corollary~25.5(i)]{Livre1} implies that
$\ID-\boldsymbol{W}\circ\boldsymbol{S}
+\partial\boldsymbol{\varphi}$ is maximally monotone, it 
follows from \cite[Corollary~25.27(i)]{Livre1}
that $\ID-\boldsymbol{W}\circ\boldsymbol{S}+
\partial\boldsymbol{\varphi}$ is surjective.

\ref{p:10vid}$\Rightarrow$\ref{p:10via}: 
We have $\dom\boldsymbol{\varphi}^*=\HHH$. Hence
since $\intdom\boldsymbol{\varphi}^*
\subset\dom\partial\boldsymbol{\varphi}^*$
\cite[Proposition~16.27]{Livre1}, we have 
$\ran\partial\boldsymbol{\varphi}
=\dom(\partial\boldsymbol{\varphi})^{-1}
=\dom\partial\boldsymbol{\varphi}^*=\HHH$.
Hence, $\partial\boldsymbol{\varphi}$ is surjective.  
We conclude using the same arguments as in \ref{p:10vic}:
$\partial\boldsymbol{\varphi}$ and 
$\ID-\boldsymbol{W}\circ\boldsymbol{S}$
are both $3^*$~monotone and their
sum is maximally monotone, which allows us to invoke 
\cite[Corollary~25.27(i)]{Livre1}.

\ref{p:10vie}$\Rightarrow$\ref{p:10via}: 
As seen in \ref{p:10vid}, 
$\partial\boldsymbol{\varphi}$ is surjective.
We have $\HHH=\intdom\boldsymbol{\varphi}\subset
\dom\partial\boldsymbol{\varphi}$
\cite[Proposition~16.27]{Livre1}. Consequently, 
$\HHH=\dom(\ID-\boldsymbol{W}\circ\boldsymbol{S})\subset
\dom\partial\boldsymbol{\varphi}$. Altogether, since 
$\partial\boldsymbol{\varphi}$ is $3^*$~monotone, it follows
from \cite[Corollary~25.27(ii)]{Livre1} that 
$\ID-\boldsymbol{W}\circ\boldsymbol{S}+
\partial\boldsymbol{\varphi}$ is surjective.

\ref{p:10vig}$\Rightarrow$\ref{p:10via}: 
As seen in \ref{p:10vic}, $\ID-\boldsymbol{W}\circ\boldsymbol{S}$
is surjective and $\partial\boldsymbol{\varphi}$ 
is $3^*$~monotone. In addition, 
$\dom(\ID-\boldsymbol{W}\circ\boldsymbol{S})
\subset\dom\partial\boldsymbol{\varphi}$ since
$\HHH=\intdom\boldsymbol{\varphi}\subset
\dom\partial\boldsymbol{\varphi}$ \cite[Proposition~16.27]{Livre1}. 
Altogether, it follows from \cite[Corollary~25.27(ii)]{Livre1} 
that $\ID-\boldsymbol{W}\circ\boldsymbol{S}+
\partial\boldsymbol{\varphi}$ is surjective.

\ref{p:10vif}: Here $\dom\boldsymbol{A}=\dom\partial
\boldsymbol{\varphi}\subset\dom\boldsymbol{\varphi}=
\cart_{\!i=1}^{\!m}\dom\varphi_i$ is bounded. Hence, 
$\boldsymbol{F}=\zer\boldsymbol{A}\neq\emp$ 
\cite[Proposition~23.36(iii)]{Livre1}.
\end{proof}

\begin{remark}
In Proposition~\ref{p:10}\ref{p:10v}, it is required that 
$\ID-\boldsymbol{W}\circ\boldsymbol{S}$ be monotone, or
equivalently, that its self-adjoint part 
$\ID-(\boldsymbol{W}\circ\boldsymbol{S}+
\boldsymbol{S}^*\circ\boldsymbol{W}^*)/2$ be positive. 
In a finite-dimensional setting, this just means that the 
eigenvalues of the matrix $\boldsymbol{W}
\boldsymbol{S}+\boldsymbol{S}^*\boldsymbol{W}^*$ are 
in $\left]\minf,2\right]$.
\end{remark}

\begin{remark}
\label{r:93}
Let $\overline{\boldsymbol{x}}\in\HHH$ be a solution to the 
variational inequality \eqref{e:vi1}. A natural question is whether 
$\overline{\boldsymbol{x}}$ solves a minimization problem. 
In general the answer
is negative. For instance, for $m\geq 3$ layers, even if the
Hilbert spaces $(\HH_i)_{1\leq i\leq m}$ are identical,
$\boldsymbol{W}=\ID$, the vectors $(b_i)_{1\leq i\leq m}$ are
zero, and the functions $(\varphi_i)_{1\leq i\leq m}$ are indicator
functions of closed convex sets $(C_i)_{1\leq i\leq m}$, the
solutions to \eqref{e:vi1} do not minimize any function
$\boldsymbol{\Phi}\colon\HHH\to\RR$ \cite{Jfan12}. A rather
restrictive scenario in which the answer is positive is when
$\ID-\boldsymbol{W}\circ\boldsymbol{S}$ is monotone and 
$\boldsymbol{W}\circ\boldsymbol{S}$ is self-adjoint. Then 
$\overline{\boldsymbol{x}}$ is a minimizer of 
$\boldsymbol{\Phi}\colon\boldsymbol{x}\mapsto(1/2)
\scal{\boldsymbol{x}-\boldsymbol{W}(\boldsymbol{S}\boldsymbol{x})}
{\boldsymbol{x}}+\boldsymbol{\psi}(\boldsymbol{x})$.
\end{remark}

\begin{example}
In Model~\ref{m:1}, suppose that, for every $i\in\{1,\ldots,m\}$,
$\HH_i=\RR^{N_i}$ for some strictly positive integer $N_i$.
In addition, assume that, for every $i\in\{1,\ldots,m\}$, $R_i$ is
a separable activation operator with respect to the canonical 
basis of $\RR^{N_i}$ (see Proposition~\ref{p:4}), and that it
employs the ReLU activation functions of Example~\ref{ex:1}. 
For every $i\in\{1,\ldots,m\}$, let 
$x_i=(\xi_{i,k})_{1\leq k\leq N_i}\in\RR^{N_i}$ and set 
$b_i=(\beta_{i,k})_{1\leq k\leq N_i}$. Then it follows from
Proposition~\ref{p:10}\ref{p:10i}
that $(x_1,\ldots,x_m)\in\boldsymbol{F}$ if and only if, 
for every $i\in\{1,\ldots,m\}$, $x_i\in\RP^{N_i}$ and
\begin{multline}
\begin{cases}
(\forall k\in\{1,\ldots,N_1\})\quad 
[W_1x_m]_k+\beta_{1,k}-\xi_{1,k}\in\mathcal{I}(\xi_{1,k})
\\
(\forall k\in\{1,\ldots,N_2\})\quad 
[W_2x_1]_k+\beta_{2,k}-\xi_{2,k}\in\mathcal{I}(\xi_{2,k})
\\
\qquad\vdots\\
(\forall k\in\{1,\ldots,N_{m-1}\})\quad 
[W_{m-1}
x_{m-2}]_k+\beta_{m-1,k}-\xi_{m-1,k}\in\mathcal{I}(\xi_{m-1,k})
\\
(\forall k\in\{1,\ldots,N_m\})\quad 
[W_m x_{m-1}]_k+\beta_{m,k}-\xi_{m,k}\in\mathcal{I}(\xi_{m,k}),
\end{cases}
\end{multline}
where, given $x\in\HH_{i-1}$, $[W_ix]_k$ is the $k$th component of 
$W_ix$ and
\begin{equation}
(\forall\xi\in\RP)\quad 
\mathcal{I}(\xi)=
\begin{cases}
\{0\}, &\text{if}\;\;\xi\in\RPP;\\
\RM, &\text{if}\;\;\xi=0.
\end{cases}
\end{equation}
Altogether, we conclude that $\boldsymbol{F}$ is a closed convex
polyhedron.
\end{example}

\subsection{Asymptotic analysis}

Next, we investigate the asymptotic behavior of \eqref{e:algo4} in
the context of Model \ref{m:1}.

\begin{theorem}
\label{t:1}
In the setting of Model~\ref{m:1}, set $T=T_m\circ\cdots\circ T_1$, 
let $\alpha\in [1/2,1]$, and suppose that the following hold:
\begin{enumerate}[label=\rm(\alph*)]
\itemsep0mm 
\item
\label{t:1i}
$F\neq\emp$.
\item
\label{t:1ii}
$(W_i)_{1\leq i\leq m}$ satisfies Condition~\ref{o:1} with
parameter $\alpha$.
\item
\label{t:1iii}
One of the following is satisfied:
\begin{enumerate}[label=\rm(\roman*)]
\itemsep0mm 
\item
\label{t:1ci}
$\lambda_n\equiv 1/\alpha=1$ and $Tx_n-x_n\to 0$.
\item
\label{t:1cii}
$(\lambda_n)_{n\in\NN}$ lies in $\left]0,1/\alpha\right[$ and
$\sum_{n\in\NN}\lambda_n(1-\alpha\lambda_n)=\pinf$.
\end{enumerate}
\end{enumerate}
Then $(x_n)_{n\in\NN}$ converges weakly to a point 
$\overline{x}_m\in F$ and $(T_1\overline{x}_m,(T_2\circ T_1)
\overline{x}_m,\ldots, (T_{m-1}\circ\cdots\circ T_1)
\overline{x}_m,\overline{x}_m)$
solves \eqref{e:vi1}. Now suppose that, in addition, any of 
the following holds:
\begin{enumerate}
\itemsep0mm 
\setcounter{enumi}{2}
\item
\label{t:1d}
For every $i\in\{1,\ldots,m-1\}$, $R_i$ is weakly sequentially 
continuous.
\item
\label{t:1e}
For every $i\in\{1,\ldots,m-1\}$, $R_i$ is a separable activation
operator in the sense of Proposition~\ref{p:4}.
\item
\label{t:1f}
For every $i\in\{1,\ldots,m-1\}$, $\HH_i$ is finite-dimensional.
\item
\label{t:1g}
For some $\varepsilon\in\left]0,1/2\right[$, 
$(\lambda_n)_{n\in\NN}$ lies in  
$[\varepsilon,(1-\varepsilon)(\varepsilon+1/\alpha)]$ and, 
for every $i\in\{1,\ldots,m\}$, $\HH_i=\HH$ and there exists 
$\beta_i\in\zeroun$ such that
$\|W_i-2(1-\beta_i)\Id\|+\|W_i\|\leq 2\beta_i$.
\end{enumerate}
Then, for every $i\in\{1,\ldots,m-1\}$, $(x_{i,n})_{n\in\NN}$ 
converges weakly to 
$\overline{x}_i=(T_i\circ\cdots\circ T_1)\overline{x}_m$ 
and $(\overline{x}_1,\ldots,\overline{x}_m)$ solves \eqref{e:vi1}.
\end{theorem}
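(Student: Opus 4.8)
The plan is to read \eqref{e:algo4} as a relaxed iteration of a single averaged operator, apply classical fixed point theory to obtain the weak convergence of $(x_n)_{n\in\NN}$, and then propagate this convergence layer by layer.

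First, for every $i\in\{1,\ldots,m\}$ the operator $P_i\colon\HH_i\to\HH_i\colon y\mapsto R_i(y+b_i)$ is firmly nonexpansive, since firm nonexpansiveness of $R_i=\prox_{\varphi_i}$ is preserved under precomposition with a translation; hence $T=T_m\circ\cdots\circ T_1=P_m\circ W_m\circ\cdots\circ P_1\circ W_1$, which by \ref{t:1ii} and Theorem~\ref{t:2} is $\alpha$-averaged, with $\Fix T=F\neq\emp$ by \ref{t:1i}. Since \eqref{e:algo4} reads $x_{n+1}=x_n+\lambda_n(Tx_n-x_n)$, in case \ref{t:1cii} I would write $T=(1-\alpha)\Id+\alpha Q$ with $Q$ nonexpansive and recognize a relaxed Krasnosel'ski\u{\i}--Mann iteration of $Q$ with parameters $\mu_n=\alpha\lambda_n\in\zeroun$ obeying $\sum_n\mu_n(1-\mu_n)=\alpha\sum_n\lambda_n(1-\alpha\lambda_n)=\pinf$, so \cite[Theorem~5.15]{Livre1} gives $x_n\weakly\overline{x}_m\in F$ and $Tx_n-x_n\to 0$. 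In case \ref{t:1ci}, $x_{n+1}=Tx_n$ with $T$ nonexpansive, $\Fix T\neq\emp$, and $x_n-Tx_n\to 0$, so the same conclusions follow from Fej\'er monotonicity, the demiclosedness principle \cite[Corollary~4.27]{Livre1}, and an Opial-type argument \cite[Lemma~2.47]{Livre1}. In both cases $\overline{x}_m\in F$, so Proposition~\ref{p:10}\ref{p:10iv} yields $(T_1\overline{x}_m,\ldots,(T_{m-1}\circ\cdots\circ T_1)\overline{x}_m,\overline{x}_m)\in\boldsymbol{F}$, which solves \eqref{e:vi1} by Proposition~\ref{p:10}\ref{p:10i}; this settles the first assertion.

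For the second assertion, set $x_{0,n}=x_n$, $\overline{x}_0=\overline{x}_m$, and $\overline{x}_i=(T_i\circ\cdots\circ T_1)\overline{x}_m$ for $i\in\{1,\ldots,m\}$, so $x_{i,n}=T_ix_{i-1,n}$, and argue by induction on $i$, the base case $i=0$ being the weak convergence just obtained. Under \ref{t:1d}, $T_i=R_i(W_i\cdot+b_i)$ is weakly sequentially continuous (composition of the weakly continuous affine map $W_i\cdot+b_i$ with the weakly sequentially continuous $R_i$), so $x_{i,n}\weakly\overline{x}_i$ at once. Under \ref{t:1e}, I would first check that a separable $R_i$ is weakly sequentially continuous: if $y_n\weakly y$, then $(y_n)$, and hence by Proposition~\ref{p:5}\ref{p:5ii} $(R_iy_n)$, is bounded, while $\scal{R_iy_n}{e_k}=\prox_{\phi_k}\scal{y_n}{e_k}\to\prox_{\phi_k}\scal{y}{e_k}=\scal{R_iy}{e_k}$ for each basis vector $e_k$ by continuity of $\prox_{\phi_k}$, so every weak sequential cluster point of $(R_iy_n)$ equals $R_iy$; this reduces \ref{t:1e} to \ref{t:1d}. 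Under \ref{t:1f}, weak and strong convergence coincide on each finite-dimensional $\HH_i$ ($1\leq i\leq m-1$), and $W_1$ carries weakly convergent sequences of $\HH_0$ to convergent sequences of $\HH_1$, so an elementary induction using the continuity of $T_2,\ldots,T_{m-1}$ gives $x_{i,n}\to\overline{x}_i$, hence $x_{i,n}\weakly\overline{x}_i$.

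The case \ref{t:1g} is the substantive one, as no continuity of the $R_i$ is available. There, each hypothesis $\|W_i-2(1-\beta_i)\Id\|+\|W_i\|\leq 2\beta_i$ is exactly Proposition~\ref{p:9}\ref{p:9iii} for a one-layer family, so Theorem~\ref{t:2} makes every $T_i=P_i\circ W_i$ a $\beta_i$-averaged operator; hence
\begin{equation*}
\|x_{i,n}-\overline{x}_i\|^2\leq\|x_{i-1,n}-\overline{x}_{i-1}\|^2
-\frac{1-\beta_i}{\beta_i}\,\big\|(\Id-T_i)x_{i-1,n}-(\Id-T_i)\overline{x}_{i-1}\big\|^2,
\end{equation*}
and chaining over $i\in\{1,\ldots,m\}$ gives $\|x_{m,n}-\overline{x}_m\|^2\leq\|x_n-\overline{x}_m\|^2-\sum_{i=1}^m\frac{1-\beta_i}{\beta_i}\|(\Id-T_i)x_{i-1,n}-(\Id-T_i)\overline{x}_{i-1}\|^2$. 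Independently, expanding $\|x_{n+1}-\overline{x}_m\|^2$ from \eqref{e:algo4} and using the $\alpha$-averagedness of $T$ to dominate $\|x_n-x_{m,n}\|^2$ by a multiple of $\|x_n-\overline{x}_m\|^2-\|x_{m,n}-\overline{x}_m\|^2$, while exploiting the bounds $\varepsilon\leq\lambda_n\leq(1-\varepsilon)(\varepsilon+1/\alpha)$ (which keep $1-\alpha\lambda_n$ bounded below by a positive constant and force $\alpha<1$ whenever $\lambda_n>1$), I would derive $\|x_n-\overline{x}_m\|^2-\|x_{m,n}-\overline{x}_m\|^2\leq C\big(\|x_n-\overline{x}_m\|^2-\|x_{n+1}-\overline{x}_m\|^2\big)$ for some $C=C(\alpha,\varepsilon)$. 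Since $(\|x_n-\overline{x}_m\|)_{n\in\NN}$ is nonincreasing by Fej\'er monotonicity, the right-hand side is summable; therefore $\|(\Id-T_i)x_{i-1,n}-(\Id-T_i)\overline{x}_{i-1}\|\to 0$, i.e.\ $x_{i-1,n}-x_{i,n}\to\overline{x}_{i-1}-\overline{x}_i$ strongly, for every $i$. Combining this with the inductive hypothesis $x_{i-1,n}\weakly\overline{x}_{i-1}$ gives $x_{i,n}=x_{i-1,n}-(x_{i-1,n}-x_{i,n})\weakly\overline{x}_i$, completing the induction, and $(\overline{x}_1,\ldots,\overline{x}_m)\in\boldsymbol{F}$ solves \eqref{e:vi1} as before. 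The main obstacle is this last case: with the $R_i$ being arbitrary proximity operators there is no continuity to exploit, so the weak convergence of the intermediate sequences must be extracted from a telescoping of the layerwise Fej\'er inequalities, which requires careful control of the relaxation range---in particular of the sign of $1-\lambda_n$ and of the degenerate value $\alpha=1$.
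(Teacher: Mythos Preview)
Your proof is correct and follows essentially the same architecture as the paper's: establish $\alpha$-averagedness of $T$ via Theorem~\ref{t:2}, invoke standard Krasnosel'ski\u{\i}--Mann theory for the weak convergence of $(x_n)_{n\in\NN}$, and then propagate through the layers. The only noteworthy difference is in case~\ref{t:1g}: where the paper simply cites \cite[Theorem~3.5(ii)]{Jmaa15} to obtain the strong convergence of the layerwise residuals $(\Id-T_i)x_{i-1,n}-(\Id-T_i)\overline{x}_{i-1}\to 0$, you reprove that result by chaining the $\beta_i$-averaged inequalities and controlling $\|x_n-\overline{x}_m\|^2-\|Tx_n-\overline{x}_m\|^2$ via the relaxation bounds (your case split on $\lambda_n\lessgtr 1$, with the observation that $\alpha=1$ forces $\lambda_n<1$, is exactly what is needed). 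Similarly, for~\ref{t:1e} you verify directly the weak sequential continuity of separable proximity operators that the paper obtains from \cite[Proposition~24.12(iii)]{Livre1}. So your argument is more self-contained but not conceptually different; the paper's version is shorter only because it outsources these steps to the literature.
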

\begin{proof}
We first derive from \eqref{e:algo4} and Model~\ref{m:1} that
\begin{equation}
\label{e:i5}
(\forall n\in\NN)\quad x_{n+1}=x_n+\lambda_n(Tx_n-x_n).
\end{equation}
Now set $(\forall i\in\{1,\ldots,m\})$ 
$P_i\colon\HH_i\to\HH_i\colon y\mapsto R_i(y+b_i)$.
Then \eqref{e:sj1} yields
$T=P_m\circ W_m\circ\cdots\circ P_1\circ W_1$ and, since the 
operators $(R_i)_{1\leq i\leq m}$ are firmly nonexpansive, the 
operators $(P_i)_{1\leq i\leq m}$ are likewise. Hence, it follows 
from \ref{t:1ii}, Theorem~\ref{t:2}, and \eqref{e:F} that
\begin{equation}
\label{e:a124}
T\;\text{is $\alpha$-averaged and}\;\Fix T=F.
\end{equation}

\ref{t:1ci}: In view of \eqref{e:a124}, $T$ is nonexpansive and
hence we derive from \cite[Theorem~5.14(i)]{Livre1} that
$(x_n)_{n\in\NN}$ converges weakly to a point in $F$. The second 
assertion follows from Proposition~\ref{p:10}\ref{p:10iv}.

\ref{t:1cii}: In view of \eqref{e:a124}, $T$ is $\alpha$-averaged
with $\alpha<1$. In turn, \cite[Proposition~5.16(iii)]{Livre1}
implies that $(x_n)_{n\in\NN}$ converges weakly to a point in $F$,
and we conclude by invoking Proposition~\ref{p:10}\ref{p:10iv}.

We now prove the convergence of the individual
sequences under each assumption.

\ref{t:1d}: We have already established that 
$x_n\weakly\overline{x}_m$. Since $W_1$ is
weakly continuous as a bounded linear operator, so is $T_1$ in
\eqref{e:sj1}. Hence, \eqref{e:algo4} implies that
$x_{1,n}=T_1x_n\weakly T_1\overline{x}_m=\overline{x}_1$. Likewise,
we obtain successively
$x_{2,n}=T_2x_{1,n}\weakly T_2\overline{x}_1=\overline{x}_2$,
$x_{3,n}=T_3x_{2,n}\weakly T_3\overline{x}_2=\overline{x}_3$,\ldots,
$x_{m,n}=T_mx_{m-1,n}\weakly T_m\overline{x}_{m-1}=\overline{x}_m$.

\ref{t:1e}$\Rightarrow$\ref{t:1d}: 
See \cite[Proposition~24.12(iii)]{Livre1}.

\ref{t:1f}$\Rightarrow$\ref{t:1d}: A proximity operator 
is nonexpansive and therefore continuous, hence weakly continuous
in a finite-dimensional setting.

\ref{t:1g}: As shown above, $x_n\weakly\overline{x}_m\in F$. 
It follows from Proposition~\ref{p:9}\ref{p:9iii}
and Theorem~\ref{t:2} (applied with $m=1$) that, for every
$i\in\{1,\ldots,m\}$, $T_i$ is $\beta_i$-averaged. Hence, upon
applying \cite[Theorem~3.5(ii)]{Jmaa15} with $\alpha$ as an 
averaging constant of $T$, we infer that 
\begin{equation}
\label{e:kj2018}
\begin{cases}
(\Id-T_1)x_n-(\Id-T_1)\overline{x}_m\to 0\\
(\Id-T_2)(T_1x_n)-(\Id-T_2)(T_1\overline{x}_m)\to 0\\
\hskip 12mm\vdots\\
(\Id-T_m)((T_{m-1}\circ\cdots\circ T_1)x_n)-
(\Id-T_m)((T_{m-1}\circ\cdots\circ T_1)\overline{x}_m)\to 0.
\end{cases}
\end{equation}
Thus, 
$x_{1,n}-x_n=T_1x_n-x_n\to T_1\overline{x}_m-\overline{x}_m$, which
implies that $x_{1,n}=(x_{1,n}-x_n)+x_n\weakly 
(T_1\overline{x}_m-\overline{x}_m)+\overline{x}_m=
T_1\overline{x}_m$. However, since 
$x_{2,n}-x_{1,n}=(T_2\circ T_1)x_n-T_1x_n\to
(T_2\circ T_1)\overline{x}_m-T_1\overline{x}_m$, we obtain 
$x_{2,n}\weakly (T_2\circ T_1)\overline{x}_m$. Continuing this
telescoping process yields the claim. 
\end{proof}

The next result covers the case when the variational inequality
problem \eqref{e:vi1} has no solution.

\begin{proposition}
\label{p:20}
In the setting of Model~\ref{m:1}, suppose that 
$(W_i)_{1\leq i\leq m}$ satisfies Condition~\ref{o:1} 
with $\alpha\in [1/2,1]$, that 
$(\lambda_n)_{n\in\NN}$ lies in
$[\varepsilon,(1/\alpha)-\varepsilon]$, for some 
$\varepsilon\in\left]0,1/2\right[$, and that
$F=\emp$. Then $\|x_n\|\to\pinf$.
\end{proposition}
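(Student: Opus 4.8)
The plan is to recognize that, in the context of Model~\ref{m:1}, the recursion \eqref{e:algo4} is nothing but a relaxed Krasnosel'skii--Mann iteration driven by a fixed-point-free averaged operator, and then to invoke the classical asymptotic theory of such iterations. First I would collect the structural facts that are already available: exactly as in the proof of Theorem~\ref{t:1} (with $P_i\colon y\mapsto R_i(y+b_i)$ firmly nonexpansive, see also Proposition~\ref{p:10}\ref{p:10iii}), \eqref{e:a124} gives that $T=T_m\circ\cdots\circ T_1$ is $\alpha$-averaged with $\Fix T=F$, so that the hypothesis $F=\emp$ means $\Fix T=\emp$; and \eqref{e:algo4} reduces to the recursion \eqref{e:i5}, namely $x_{n+1}=x_n+\lambda_n(Tx_n-x_n)$. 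Writing $T=(1-\alpha)\Id+\alpha Q$ for the nonexpansive operator $Q=\alpha^{-1}T+(1-\alpha^{-1})\Id\colon\HH\to\HH$ (nonexpansiveness being the defining property of $\alpha$-averagedness, and $Q$ being the operator already used in \eqref{e:fz84}), the recursion becomes $x_{n+1}=(1-\mu_n)x_n+\mu_nQx_n$ with $\mu_n=\alpha\lambda_n\in[\alpha\varepsilon,1-\alpha\varepsilon]$, i.e.\ a genuine Krasnosel'skii--Mann iteration for $Q$ whose relaxation parameters lie in a fixed compact subinterval of $\zeroun$, and with $\Fix Q=\Fix T=\emp$.

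Next I would control the displacement sequence. A one-line estimate, $\|Qx_{n+1}-x_{n+1}\|\leq\|Qx_{n+1}-Qx_n\|+\|Qx_n-x_{n+1}\|\leq\mu_n\|Qx_n-x_n\|+(1-\mu_n)\|Qx_n-x_n\|$, shows that $(\|Qx_n-x_n\|)_{n\in\NN}$ is nonincreasing, hence convergent to some $\ell\geq 0$; moreover, by the classical analysis of nonexpansive iterations, $\ell$ equals the minimal displacement $d=\inf_{x\in\HH}\|x-Qx\|$. I would then split into two cases. If $d=0$, then $Qx_n-x_n\to 0$, and if $(x_n)_{n\in\NN}$ admitted a bounded subsequence, a weakly convergent sub-subsequence $x_{k_n}\weakly\overline{x}$ would satisfy $\overline{x}\in\Fix Q=\emp$ by Browder's demiclosedness principle applied to $\Id-Q$, a contradiction; hence $\|x_n\|\to\pinf$. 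If $d>0$, then $\overline{\ran}(\Id-Q)$ is nonempty, closed and convex, its element of minimal norm $v$ satisfies $\|v\|=d>0$, and the Baillon--Bruck--Reich asymptotic theorem (in its relaxed form) yields $x_n/\sigma_n\to -v$, where $\sigma_n=\sum_{k=0}^{n-1}\mu_k\geq n\alpha\varepsilon\to\pinf$; consequently $\|x_n\|=\sigma_n\|x_n/\sigma_n\|\to\pinf$ since $\|x_n/\sigma_n\|\to\|v\|>0$.

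The main obstacle is the case $d>0$: there the iterates are not asymptotically regular, so the demiclosedness argument is unavailable and one is forced to control the \emph{direction} along which the iterates drift to infinity. This is precisely where the classical ingredients enter -- convexity of $\overline{\ran}(\Id-Q)$, the identification of $\ell$ with $d$, and convergence of the normalized iterates $x_n/\sigma_n$ to the minimal-norm displacement vector $-v$ -- and checking that these apply verbatim to the present relaxed iteration (with $\mu_n$ only confined to a subinterval of $\zeroun$, not converging) is the one delicate point. By contrast, the reduction in the first paragraph is routine, and the case $d=0$ is immediate once the displacement vanishes, by weak compactness of bounded sets and demiclosedness.

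Finally, I would note that the two cases are exhaustive: $d$ is a nonnegative real number, and in either case we have shown $\|x_n\|\to\pinf$, which is the claim. (Should one prefer to avoid the Baillon--Bruck--Reich machinery, an alternative for the case $d>0$ is to argue that a bounded iteration would, by an Ishikawa-type asymptotic regularity result for nonexpansive self-maps of $\HH$, force $\|Qx_n-x_n\|\to 0$ and hence $d=0$; but upgrading ``unbounded'' to ``$\|x_n\|\to\pinf$'' still ultimately requires the directional information, so this route does not genuinely circumvent the obstacle.)
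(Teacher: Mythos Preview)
Your proposal is correct and follows essentially the same route as the paper: reduce \eqref{e:algo4} to a Krasnosel'skii--Mann iteration $x_{n+1}=(1-\mu_n)x_n+\mu_nQx_n$ for the nonexpansive operator $Q=(1-1/\alpha)\Id+(1/\alpha)T$ with $\Fix Q=F=\emp$ and $\mu_n=\alpha\lambda_n$ confined to a compact subinterval of $\zeroun$, and then appeal to the classical asymptotic theory of fixed-point-free nonexpansive iterations. The paper's own proof is simply terser, delegating the divergence conclusion directly to \cite[Proposition~4.29]{Livre1} and \cite[Corollary~9(b)]{Borw92} rather than unfolding the displacement analysis and the $d=0$ versus $d>0$ dichotomy as you do.
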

\begin{proof}
We derive from \eqref{e:i5} and \eqref{e:a124} that, for every
$n\in\NN$, $x_{n+1}=x_n+\mu_n(Qx_n-x_n)$, where 
$Q=(1-1/\alpha)\Id+(1/\alpha)T$ is nonexpansive and 
such that $\Fix Q=F$, and $\mu_n=\alpha\lambda_n\in\zeroun$. Hence
the claims follows from \cite[Proposition~4.29]{Livre1} and
\cite[Corollary~9(b)]{Borw92}.
\end{proof}

\begin{remark}
When assumptions~\ref{t:1i}--\ref{t:1iii} in Theorem~\ref{t:1} are
satisfied, the neural network described in Model~\ref{m:2} is
robust to perturbations of its input. Indeed, since $T$ is
$\alpha$-averaged in \eqref{e:i5}, we can write the updating rule
as $x_{n+1}=Q_nx_n$, where $Q_n$ is nonexpansive. In turn, 
if $x_0$ and $\widetilde{x}_0$ are 
two inputs in $\HH_0$, for a given $n\in\NN$, the resulting 
outputs $x_n$ and $\widetilde{x}_n$
are such that $\|x_n-\widetilde{x}_n\|\leq\|x_0-\widetilde{x}_0\|$.
\end{remark}

\begin{remark}
In connection with Theorem~\ref{t:1} and Remark~\ref{r:93}, 
let us underline that in general the weak limit $\overline{x}_m$ 
of $(x_n)_{n\in\NN}$ does not solve a 
minimization problem. A very special case in which it does
is the following. Suppose that $m=2$, $\HH_1=\HH$, 
$\|W_1\|\leq 1$, and $W_2=W_1^*$. 
Set $\psi_1=\varphi_1-\scal{\cdot}{b_1}$ and 
$\psi_2=\varphi_2-\scal{\cdot}{b_2}$, and let 
$\overline{x}_2\in F$, i.e., $\overline{x}_2=(\prox_{\psi_2}\circ 
W_1^*\circ\prox_{\psi_1}\circ W_1)\overline{x}_2$. It follows from
\cite[Remark~3.10(iv)]{Comb18} that there exists a function
$\vartheta\in\Gamma_{0}(\HH)$ such that $W_{1}^*\circ
\prox_{\psi_1}\circ W_1=\prox_{\vartheta}$. Thus,
$\overline{x}_2$ is a fixed point of the backward-backward
operator $\prox_{\psi_2}\circ\prox_{\vartheta}$. It then follows
from \cite[Remark~6.13]{Opti04} that $\overline{x}_2$
is a minimizer of $\moyo{\vartheta}{1}+\psi_2$, where 
$\moyo{\vartheta}{1}\colon x\mapsto\inf_{y\in\HH}
(\vartheta(y)+\|x-y\|^2/2)$ is the Moreau envelope of $\vartheta$.
\end{remark}

\begin{remark}
To model closely existing deep neural networks, we have chosen the
activation operators in Definition~\ref{d:2} and Model~\ref{m:1} 
to be proximity operators. However, as is clear from the results of
Section~\ref{sec:2} and in particular the central Theorem~\ref{t:2},
an activation operator $R_i\colon\HH_i\to\HH_i$ could more 
generally be a firmly nonexpansive operator that admits $0$ as a 
fixed point. By \cite[Corollary~23.9]{Livre1}, this means that 
$R_i$ is the resolvent of some maximally monotone operator such
$A_i\colon\HH_i\to 2^{\HH_i}$ (i.e., $R_i=(\Id+A_i)^{-1}$)
such that $0\in A_i0$. In this context, the variational inequality
\eqref{e:vi1} assumes the more general form of a system of
monotone inclusions, namely, 
\begin{equation}
\label{e:vi2}
\text{find}\;\;\overline{x}_1\in\HH_1,\ldots,\,
\overline{x}_m\in\HH_m
\;\;\text{such that}\quad
\begin{cases}
b_1\in
\overline{x}_1-W_1\overline{x}_m+A_1\overline{x}_1\\
b_2\in\overline{x}_2-W_2\overline{x}_1+A_2\overline{x}_2\\
\hskip 6mm\vdots\\
b_m\in\overline{x}_m-W_m\overline{x}_{m-1}+A_m 
\overline{x}_m.
\end{cases}
\end{equation}
\end{remark}

\section{Analysis of nonperiodic networks} 
\label{sec:5}

We analyze the deep neural network described in Model~\ref{m:2}
in the following scenario. 

\begin{assumption}
\label{a:main} 
In the setting of Model~\ref{m:2}, there exist sequences 
$(\omega_n)_{n\in\NN}\in\ell_+^1$,
$(\rho_n)_{n\in\NN}\in\ell_{+}^{1}$,
$(\eta_n)_{n\in\NN}\in\ell_{+}^{1}$, and
$(\nu_n)_{n\in\NN}\in\ell_+^1$ for which the following hold
for every $i\in\{1,\ldots,m\}$:
\begin{enumerate}
\itemsep0mm 
\item 
\label{a:maini} 
There exists
$W_i\in\BL(\HH_{i-1},\HH_i)$  such that $(\forall n \in \NN)$
$\|W_{i,n}-W_i\|\leq\omega_n$.
\item 
\label{a:mainii} 
There exists
$R_i\in\mathcal{A}(\HH_i)$ such that 
$(\forall n\in\NN)(\forall x\in\HH_i)$ 
$\|R_{i,n}x-R_ix\|\leq\rho_n\|x\|+\eta_n$.
\item
\label{a:mainiii} 
There exists $b_i\in\HH_i$ such that 
$(\forall n\in\NN)$ $\|b_{i,n}-b_i\|\leq\nu_n$.
\end{enumerate}
In addition, we set
\begin{equation}
\label{e:sj10}
(\forall i\in\{1,\ldots,m\})\quad
T_i\colon\HH_{i-1}\to\HH_i\colon x\mapsto R_i(W_ix+b_i).
\end{equation} 
\end{assumption}

\begin{proposition}
\label{p:difTin}
In the setting of Model~\ref{m:2}, suppose that 
Assumption~\ref{a:main} is satisfied, let $i\in\{1,\ldots,m\}$, 
and set
\begin{equation}
\label{e:defchizeta}
(\forall n\in\NN)\quad
\chi_{i,n}=\rho_n\|W_{i,n}\|+\omega_n\quad\text{and}\quad
\zeta_{i,n}=\rho_n\|b_{i,n}\|+\eta_n+\nu_n.
\end{equation}
Then $(\chi_{i,n})_{n\in\NN}\in\ell_{+}^{1}$,
$(\zeta_{i,n})_{n\in\NN}\in\ell_{+}^{1}$, and
$(\forall n\in\NN)(\forall x\in\HH_{i-1})$
$\|T_{i,n}x-T_ix\|\leq\chi_{i,n}\|x\|+\zeta_{i,n}$.
\end{proposition}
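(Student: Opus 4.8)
The plan is to estimate $\|T_{i,n}x-T_ix\|$ by inserting the intermediate point $R_i(W_{i,n}x+b_{i,n})$ and invoking the triangle inequality: one of the two resulting terms is controlled by the perturbation bound on $R_{i,n}$ from Assumption~\ref{a:main}\ref{a:mainii}, and the other by the nonexpansiveness of $R_i$ together with the perturbation bounds on $W_{i,n}$ and $b_{i,n}$ from Assumption~\ref{a:main}\ref{a:maini} and \ref{a:mainiii}.

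First I would establish the summability claims. Since $(\omega_n)_{n\in\NN}$ and $(\nu_n)_{n\in\NN}$ lie in $\ell_+^1$, they are bounded, so $(\|W_{i,n}\|)_{n\in\NN}$ and $(\|b_{i,n}\|)_{n\in\NN}$ are bounded in view of $\|W_{i,n}\|\leq\|W_i\|+\omega_n$ and $\|b_{i,n}\|\leq\|b_i\|+\nu_n$. Because $(\rho_n)_{n\in\NN}\in\ell_+^1$, this forces $(\rho_n\|W_{i,n}\|)_{n\in\NN}$ and $(\rho_n\|b_{i,n}\|)_{n\in\NN}$ to lie in $\ell_+^1$; adding the summable sequences $(\omega_n)_{n\in\NN}$, respectively $(\eta_n)_{n\in\NN}$ and $(\nu_n)_{n\in\NN}$, shows that $(\chi_{i,n})_{n\in\NN}$ and $(\zeta_{i,n})_{n\in\NN}$ lie in $\ell_+^1$.

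Next, fixing $n\in\NN$ and $x\in\HH_{i-1}$ and writing $y=W_{i,n}x+b_{i,n}$, I would use $\|T_{i,n}x-T_ix\|\leq\|R_{i,n}y-R_iy\|+\|R_iy-R_i(W_ix+b_i)\|$. Assumption~\ref{a:main}\ref{a:mainii} bounds the first term by $\rho_n\|y\|+\eta_n\leq\rho_n\|W_{i,n}\|\,\|x\|+\rho_n\|b_{i,n}\|+\eta_n$. Since $R_i\in\mathcal{A}(\HH_i)$ is a proximity operator and hence nonexpansive (Proposition~\ref{p:5}\ref{p:5i}), the second term is at most $\|y-(W_ix+b_i)\|=\|(W_{i,n}-W_i)x+(b_{i,n}-b_i)\|\leq\omega_n\|x\|+\nu_n$. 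Adding these two estimates and collecting the coefficient of $\|x\|$ and the constant term yields $\|T_{i,n}x-T_ix\|\leq(\rho_n\|W_{i,n}\|+\omega_n)\|x\|+(\rho_n\|b_{i,n}\|+\eta_n+\nu_n)=\chi_{i,n}\|x\|+\zeta_{i,n}$, as required.

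This argument is essentially a triangle-inequality computation, so I do not anticipate a genuine obstacle; the only point that needs a little care is that the summability of $(\chi_{i,n})_{n\in\NN}$ and $(\zeta_{i,n})_{n\in\NN}$ hinges on the uniform boundedness of $(\|W_{i,n}\|)_{n\in\NN}$ and $(\|b_{i,n}\|)_{n\in\NN}$, which is why one first exploits that $(\omega_n)_{n\in\NN}$ and $(\nu_n)_{n\in\NN}$ are bounded, a consequence of their being summable.
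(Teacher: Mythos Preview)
Your proposal is correct and follows essentially the same approach as the paper: the same intermediate point $R_i(W_{i,n}x+b_{i,n})$ is inserted, the same two bounds (Assumption~\ref{a:main}\ref{a:mainii} for the first term, nonexpansiveness of $R_i$ for the second) are applied, and the summability is derived from the boundedness of $(\|W_{i,n}\|)_{n\in\NN}$ and $(\|b_{i,n}\|)_{n\in\NN}$ in exactly the same way.
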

\begin{proof}
According to Assumptions~\ref{a:main}\ref{a:maini} and 
\ref{a:main}\ref{a:mainiii},
$\sup_{n\in\NN}\|W_{i,n}\|<\pinf$ and $\sup_{n\in\NN}
\|b_{i,n}\|<\pinf$. It then follows from \eqref{e:defchizeta}
that $(\chi_{i,n})_{n\in\NN}\in\ell_{+}^{1}$ and
$(\zeta_{i,n})_{n\in\NN}\in\ell_{+}^{1}$. 
Hence, we deduce from \eqref{e:sj4}, \eqref{e:sj10}, the
nonexpansiveness of $R_i$, and Assumption~\ref{a:main} that
\begin{align}
\label{e:difTi2}
&\hskip -13mm 
(\forall n\in\NN)(\forall x\in\HH_{i-1})\quad
\|T_{i,n}x-T_ix\| \nonumber\\
\qquad&\leq \|R_{i,n}(W_{i,n}x+b_{i,n})-R_i(W_{i,n}x+b_{i,n})\|
+\|R_i(W_{i,n}x+b_{i,n})-R_i(W_ix+b_i)\|\nonumber\\
&\leq \rho_n\|W_{i,n}x+b_{i,n}\|+\eta_n
+\|W_{i,n}x+b_{i,n}-W_ix-b_i\|\nonumber\\
&\leq \rho_n(\|W_{i,n}\|\,\|x\|+\|b_{i,n}\|)+\eta_n
+\|W_{i,n}-W_i\|\,\|x\|+\|b_{i,n}-b_i\|\nonumber\\
&\leq\rho_n(\|W_{i,n}\|\,\|x\|+\|b_{i,n}\|)+\eta_n
+\omega_n\,\|x\|+\nu_n\nonumber\\
&=\chi_{i,n}\|x\|+\zeta_{i,n},
\end{align}
as claimed.
\end{proof}

\begin{proposition}
\label{p:compTinoTi}
In the setting of Model~\ref{m:2}, suppose that 
Assumption~\ref{a:main} is satisfied.
Then, for every $i\in\{1,\ldots,m\}$, 
there exist $(\tau_{i,n})_{n\in\NN}\in\ell_{+}^{1}$ and
$(\theta_{i,n})_{n\in\NN}\in\ell_{+}^{1}$ such that
\begin{equation}
\label{e:fz90}
(\forall n \in\NN)(\forall x\in\HH)\quad
\|(T_{i,n}\circ\cdots\circ T_{1,n}) x-
(T_i\circ\cdots\circ T_1)x\|\leq\tau_{i,n}\|x\|+\theta_{i,n}.
\end{equation}
\end{proposition}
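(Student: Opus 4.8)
The plan is to argue by induction on $i\in\{1,\ldots,m\}$. For $i=1$ the claim is exactly Proposition~\ref{p:difTin}, with $\tau_{1,n}=\chi_{1,n}$ and $\theta_{1,n}=\zeta_{1,n}$, both in $\ell_+^1$. Now assume the assertion holds at rank $i-1$, with $(\tau_{i-1,n})_{n\in\NN}\in\ell_+^1$ and $(\theta_{i-1,n})_{n\in\NN}\in\ell_+^1$, and abbreviate $S_{i-1,n}=T_{i-1,n}\circ\cdots\circ T_{1,n}$ and $S_{i-1}=T_{i-1}\circ\cdots\circ T_1$. Fix $x\in\HH$. The triangle inequality gives
\begin{equation*}
\|(T_{i,n}\circ S_{i-1,n})x-(T_i\circ S_{i-1})x\|\leq
\|T_{i,n}(S_{i-1,n}x)-T_i(S_{i-1,n}x)\|+\|T_i(S_{i-1,n}x)-T_i(S_{i-1}x)\|,
\end{equation*}
and the two summands will be handled separately.

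Before that, I would record an $n$-independent affine bound on $\|S_{i-1,n}x\|$. Since $(\rho_n)_{n\in\NN}$ and $(\eta_n)_{n\in\NN}$ lie in $\ell_+^1$ they are bounded, and by Assumption~\ref{a:main}\ref{a:maini}--\ref{a:main}\ref{a:mainiii} so are $(\|W_{j,n}\|)_{n\in\NN}$ and $(\|b_{j,n}\|)_{n\in\NN}$ for each $j$. Using $R_j0=0$ (Proposition~\ref{p:5}\ref{p:5i-}), the nonexpansiveness of $R_j$, and Assumption~\ref{a:main}\ref{a:mainii}, one has for every $y\in\HH_{j-1}$
\begin{equation*}
\|T_{j,n}y\|=\|R_{j,n}(W_{j,n}y+b_{j,n})\|\leq(1+\rho_n)\|W_{j,n}y+b_{j,n}\|+\eta_n
\leq(1+\rho_n)\big(\|W_{j,n}\|\,\|y\|+\|b_{j,n}\|\big)+\eta_n,
\end{equation*}
so there are $n$-independent constants $a_j,c_j\in\RP$ with $\|T_{j,n}y\|\leq a_j\|y\|+c_j$. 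Composing these estimates yields $n$-independent constants $A_{i-1},C_{i-1}\in\RP$ such that $\|S_{i-1,n}x\|\leq A_{i-1}\|x\|+C_{i-1}$ for every $n\in\NN$.

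For the first summand, Proposition~\ref{p:difTin} gives $\|T_{i,n}(S_{i-1,n}x)-T_i(S_{i-1,n}x)\|\leq\chi_{i,n}\|S_{i-1,n}x\|+\zeta_{i,n}\leq\chi_{i,n}(A_{i-1}\|x\|+C_{i-1})+\zeta_{i,n}$. For the second summand, $T_i$ is Lipschitz with constant $\|W_i\|$ (it is the composition of the affine map $W_i\cdot+b_i$ with the nonexpansive operator $R_i$), so the induction hypothesis yields $\|T_i(S_{i-1,n}x)-T_i(S_{i-1}x)\|\leq\|W_i\|\,\|S_{i-1,n}x-S_{i-1}x\|\leq\|W_i\|(\tau_{i-1,n}\|x\|+\theta_{i-1,n})$. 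Adding the two bounds establishes \eqref{e:fz90} with
\begin{equation*}
\tau_{i,n}=A_{i-1}\chi_{i,n}+\|W_i\|\tau_{i-1,n}\qquad\text{and}\qquad
\theta_{i,n}=C_{i-1}\chi_{i,n}+\zeta_{i,n}+\|W_i\|\theta_{i-1,n}.
\end{equation*}
Since $(\chi_{i,n})_{n\in\NN}$, $(\zeta_{i,n})_{n\in\NN}$, $(\tau_{i-1,n})_{n\in\NN}$, and $(\theta_{i-1,n})_{n\in\NN}$ all lie in $\ell_+^1$ and are multiplied by nonnegative constants, $(\tau_{i,n})_{n\in\NN}$ and $(\theta_{i,n})_{n\in\NN}$ lie in $\ell_+^1$, which closes the induction.

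I expect the only delicate point to be the uniform-in-$n$ affine bound on $\|S_{i-1,n}x\|$: one must recall that the $R_{i,n}$ need not fix the origin, so their norms are estimated by comparison with $R_i$ through Assumption~\ref{a:main}\ref{a:mainii}, and one must use that summability forces boundedness of $(\rho_n)_{n\in\NN}$, $(\eta_n)_{n\in\NN}$, $(\|W_{j,n}\|)_{n\in\NN}$, and $(\|b_{j,n}\|)_{n\in\NN}$. Everything else is routine triangle-inequality bookkeeping together with the stability of $\ell_+^1$ under multiplication by bounded sequences.
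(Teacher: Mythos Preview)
Your proof is correct and follows the same inductive scheme as the paper's: the identical triangle-inequality split at the outermost layer, the same use of Proposition~\ref{p:difTin} for the $T_{i,n}$ versus $T_i$ term, and the same $\|W_i\|$-Lipschitz bound on $T_i$. The only variation is in how you control $\|S_{i-1,n}x\|$: the paper writes $\|S_{i-1,n}x\|\leq\|S_{i-1,n}x-S_{i-1}x\|+\|S_{i-1}x\|$ and then invokes the induction hypothesis together with Proposition~\ref{p:boundcompTib}, whereas you establish a direct $n$-uniform affine bound on each $T_{j,n}$ (via $\|R_{j,n}z\|\leq(1+\rho_n)\|z\|+\eta_n$ and the boundedness of the perturbation sequences) and compose; both routes are valid and yield $\ell_+^1$ coefficients, with the paper's version being slightly more economical in that it reuses pieces already proved rather than deriving a perturbed analogue of Proposition~\ref{p:boundcompTib} on the fly.
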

\begin{proof} 
For every $i\in\{1,\ldots,m\}$, define $(\chi_{i,n})_{n\in\NN}$
and $(\zeta_{i,n})_{n\in\NN}$ as in \eqref{e:defchizeta},
According to Proposition~\ref{p:difTin}, \eqref{e:fz90} is 
satisfied for $i=1$ by setting $(\forall n\in \NN)$ 
$\tau_{1,n}=\chi_{1,n}$ and 
$\theta_{1,n}=\zeta_{1,n}$. Next, let us assume that 
\eqref{e:fz90} holds for $i\in\{1,\ldots,m-1\}$ and set
\begin{equation}
(\forall n\in\NN)\quad
\begin{cases}
\tau_{i+1,n}=(\|W_{i+1}\|+\chi_{i+1,n})
\tau_{i,n}+\chi_{i+1,n}\Prod_{k=1}^{i}\|W_k\|\\
\theta_{i+1,n}=(\|W_{i+1}\|+\chi_{i+1,n})\theta_{i,n}+\chi_{i+1,n}
\Sum_{j=1}^i\Bigg(\|b_j\|\Prod_{k=j+1}^i\|W_k\|\Bigg)
+\zeta_{i+1,n}.
\end{cases}
\end{equation}
Then the sequences $(\tau_{i+1,n})_{n\in\NN}$
and $(\theta_{i+1,n})_{n\in\NN}$
belong to $\ell_{+}^{1}$.
Now let $n\in\NN$ and $x\in\HH$. Upon invoking 
Proposition~\ref{p:difTin}, the nonexpansiveness of $R_{i+1}$,
and Proposition~\ref{p:boundcompTib}, we obtain
\begin{align}
& \hskip -7mm  
\|(T_{i+1,n}\circ\cdots\circ T_{1,n}) x-(T_{i+1}\circ\cdots
\circ T_1)x\|\nonumber\\
&\hskip -4mm\leq\|(T_{i+1,n}\circ T_{i,n}\circ\cdots\circ T_{1,n})
x-(T_{i+1}\circ T_{i,n}\circ\cdots\circ T_{1,n})x\|\nonumber\\
&+\|(T_{i+1}\circ T_{i,n}\circ\cdots\circ T_{1,n})x- (T_{i+1}\circ 
T_i\circ\cdots\circ T_1)x\|\nonumber\\
&\hskip -4mm\leq\chi_{i+1,n}\|(T_{i,n}\circ\cdots\circ T_{1,n})x\|+
\zeta_{i+1,n}+
\|(T_{i+1}\circ T_{i,n}\circ\cdots\circ T_{1,n})x-(T_{i+1}\circ
T_i\circ\cdots\circ T_1)x\|\nonumber\\
&\hskip -4mm\leq \chi_{i+1,n} (\|(T_{i,n}\circ\cdots\circ T_{1,n})
x-(T_i\circ\cdots\circ T_1)x\|
+\|(T_i\circ\cdots\circ T_1) x\|)+\zeta_{i+1,n}\nonumber\\
&+\big\|R_{i+1}\big((W_{i+1}\circ T_{i,n}\circ\cdots T_{1,n})x
+b_{i+1}\big)-R_{i+1}\big((W_{i+1}\circ T_i\circ\cdots\circ T_1)x
+b_{i+1}\big)\big\|
\nonumber\\
&\hskip -4mm\leq (\|W_{i+1}\|+\chi_{i+1,n}) 
\|(T_{i,n}\circ\cdots\circ
T_{1,n}) x-(T_i\circ\cdots\circ T_1) x\|
+\chi_{i+1,n}\|(T_i\circ\cdots\circ T_1)x\|+
\zeta_{i+1,n}\nonumber\\
&\hskip -4mm\leq (\|W_{i+1}\|+\chi_{i+1,n})
(\tau_{i,n}\|x\|+\theta_{i,n})
+\chi_{i+1,n}\Bigg(\|x\|\prod_{k=1}^{i}\|W_k\|+
\sum_{j=1}^{i}\Bigg(\|b_j\|\prod_{k=j+1}^{i}\|W_k\|\Bigg)\Bigg)
+\zeta_{i+1,n}\nonumber\\
&\hskip -4mm=\tau_{i+1,n}\|x\|+\theta_{i+1,n},
\end{align}
which proves the result by induction.
\end{proof}

We can now present the main result of this section on the
asymptotic behavior of Model~\ref{m:2}. The proof of this
result relies on Theorem~\ref{t:1}, which it extends.

\begin{theorem}
\label{t:3}
Consider the setting of Model~\ref{m:2} and let 
$\alpha\in [1/2,1]$. Suppose that
Assumption~\ref{a:main} is satisfied as well as the following:
\begin{enumerate}[label=\rm(\alph*)]
\itemsep0mm 
\item
\label{t:3i}
$F=\Fix T\neq\emp$, where $T=T_m\circ\cdots\circ T_1$.
\item
\label{t:3ii}
$(W_i)_{1\leq i\leq m}$ satisfies Condition~\ref{o:1} with
parameter $\alpha$.
\item
\label{t:3iii}
One of the following is satisfied:
\begin{enumerate}[label=\rm(\roman*)]
\itemsep0mm 
\item
\label{t:3ci}
$\lambda_n\equiv\alpha=1$ and $Tx_n-x_n\to 0$.
\item
\label{t:3cii}
$(\lambda_n)_{n\in\NN}$ lies in $\left]0,1/\alpha\right[$ and
$\sum_{n\in\NN}\lambda_n(1-\alpha\lambda_n)=\pinf$.
\end{enumerate}
\end{enumerate}
Then $(x_n)_{n\in\NN}$ converges weakly to a point 
$\overline{x}_m\in F$ and $(T_1\overline{x}_m,(T_2\circ T_1)
\overline{x}_m,\ldots, (T_{m-1}\circ\cdots\circ T_1)
\overline{x}_m,\overline{x}_m)$
solves \eqref{e:vi1}. Now suppose that, in addition, any of 
the following holds:
\begin{enumerate}
\itemsep0mm 
\setcounter{enumi}{2}
\item
\label{t:3d}
For every $i\in\{1,\ldots,m-1\}$, $R_i$ is weakly sequentially 
continuous.
\item
\label{t:3e}
For every $i\in\{1,\ldots,m-1\}$, $R_i$ is a separable activation
function in the sense of Proposition~\ref{p:4}.
\item
\label{t:3f}
For every $i\in\{1,\ldots,m-1\}$, $\HH_i$ is finite-dimensional.
\item
\label{t:3g}
For some $\varepsilon\in\left]0,1/2\right[$, 
$(\lambda_n)_{n\in\NN}$ lies in  
$[\varepsilon,(1-\varepsilon)(\varepsilon+1/\alpha)]$ and, 
for every $i\in\{1,\ldots,m\}$, $\HH_i=\HH$ and there exists 
$\beta_i\in\zeroun$ such that
$\|W_i-2(1-\beta_i)\Id\|+\|W_i\|\leq 2\beta_i$.
\end{enumerate}
Then, for every $i\in\{1,\ldots,m-1\}$, $(x_{i,n})_{n\in\NN}$ 
converges weakly to 
$\overline{x}_i=(T_i\circ\cdots\circ T_1)\overline{x}_m$ 
and $(\overline{x}_1,\ldots,\overline{x}_m)$ solves \eqref{e:vi1}.
\end{theorem}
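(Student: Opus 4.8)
The plan is to view iteration~\eqref{e:algo4} in the setting of Model~\ref{m:2} as an \emph{inexact} Krasnosel'ski\u\i--Mann iteration driven by the limit operator $T=T_m\circ\cdots\circ T_1$ of \eqref{e:sj10}, and then to transport the conclusions of Theorem~\ref{t:1} across the perturbation. Set $T_n=T_{m,n}\circ\cdots\circ T_{1,n}$, so that \eqref{e:algo4} reads $x_{n+1}=x_n+\lambda_n(T_nx_n-x_n)=x_n+\lambda_n(Tx_n-x_n)+e_n$ with $e_n=\lambda_n(T_nx_n-Tx_n)$. Since each $P_i\colon y\mapsto R_i(y+b_i)$ is firmly nonexpansive and $(W_i)_{1\le i\le m}$ satisfies Condition~\ref{o:1} by~\ref{t:3ii}, Theorem~\ref{t:2} applied to $(P_i)_{1\le i\le m}$ shows that $T$ is $\alpha$-averaged, and \ref{t:3i} gives $\Fix T=F$; write $T=(1-\alpha)\Id+\alpha Q$ with $Q$ nonexpansive and $\Fix Q=F$. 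Moreover Proposition~\ref{p:compTinoTi} supplies $(\tau_{m,n})_{n\in\NN}\in\ell_+^1$ and $(\theta_{m,n})_{n\in\NN}\in\ell_+^1$ such that $\|e_n\|\le\lambda_n(\tau_{m,n}\|x_n\|+\theta_{m,n})$ for every $n\in\NN$.

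The first substantive step is to establish that $(x_n)_{n\in\NN}$ is bounded, because a priori the error bound involves $\|x_n\|$. For $z\in F$, the point $x_n+\lambda_n(Tx_n-x_n)=(1-\alpha\lambda_n)x_n+\alpha\lambda_n Qx_n$ is a convex combination (here $\alpha\lambda_n\le 1$ holds under both~\ref{t:3ci} and~\ref{t:3cii}), hence $\|x_n+\lambda_n(Tx_n-x_n)-z\|\le\|x_n-z\|$; combining this with the bound on $\|e_n\|$ and $\|x_n\|\le\|x_n-z\|+\|z\|$ yields $\|x_{n+1}-z\|\le(1+\lambda_n\tau_{m,n})\|x_n-z\|+\lambda_n(\tau_{m,n}\|z\|+\theta_{m,n})$. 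Since $(\lambda_n)_{n\in\NN}$ is bounded by $1/\alpha$, both coefficient sequences are summable, so $(x_n)_{n\in\NN}$ is quasi-Fej\'er monotone with respect to $F$; in particular $\|x_n-z\|$ converges, $(x_n)_{n\in\NN}$ is bounded by some $\beta\in\RPP$, and $\sum_{n\in\NN}\|e_n\|\le\sum_{n\in\NN}\lambda_n(\tau_{m,n}\beta+\theta_{m,n})<\pinf$. The iteration is now a genuine inexact Krasnosel'ski\u\i--Mann iteration for the $\alpha$-averaged operator $T$ with $\ell_+^1$ errors: under~\ref{t:3cii} the rewriting $x_{n+1}=x_n+\mu_n(Qx_n-x_n)+e_n$ with $\mu_n=\alpha\lambda_n\in\zeroun$ satisfying $\sum_{n\in\NN}\mu_n(1-\mu_n)=\pinf$, together with the quasi-Fej\'er property and the demiclosedness of $\Id-Q$, gives $x_n\weakly\overline{x}_m\in F$; under~\ref{t:3ci} the hypothesis $Tx_n-x_n\to 0$ yields asymptotic regularity $x_{n+1}-x_n\to 0$ directly, and the same demiclosedness/quasi-Fej\'er argument applies. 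Since the $T_i$ are exactly those of a Model~\ref{m:1} instance with $R_i=\prox_{\varphi_i}$, Proposition~\ref{p:10}\ref{p:10iv} then shows that $(T_1\overline{x}_m,\ldots,(T_{m-1}\circ\cdots\circ T_1)\overline{x}_m,\overline{x}_m)$ solves~\eqref{e:vi1}.

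For the convergence of the individual sequences, the key observation is that, by Proposition~\ref{p:compTinoTi}, $x_{i,n}=(T_{i,n}\circ\cdots\circ T_{1,n})x_n$ differs from $(T_i\circ\cdots\circ T_1)x_n$ by at most $\tau_{i,n}\|x_n\|+\theta_{i,n}$, which tends to $0$ since $(x_n)_{n\in\NN}$ is bounded and $(\tau_{i,n})_{n\in\NN},(\theta_{i,n})_{n\in\NN}\in\ell_+^1$. Under~\ref{t:3d} (hence under~\ref{t:3e} via \cite[Proposition~24.12(iii)]{Livre1}, and under~\ref{t:3f} since proximity operators are continuous, hence weakly continuous in finite dimension), each $R_i$ with $i\le m-1$ is weakly sequentially continuous, so $T_i\circ\cdots\circ T_1$ is weakly sequentially continuous as a composition of such maps with bounded linear operators; therefore $(T_i\circ\cdots\circ T_1)x_n\weakly(T_i\circ\cdots\circ T_1)\overline{x}_m=\overline{x}_i$, and consequently $x_{i,n}\weakly\overline{x}_i$. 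Under~\ref{t:3g} I would instead replay the telescoping argument of the proof of Theorem~\ref{t:1}\ref{t:1g}: Proposition~\ref{p:9}\ref{p:9iii} with $m=1$ and Theorem~\ref{t:2} show that each $T_i$ is $\beta_i$-averaged, and applying \cite[Theorem~3.5(ii)]{Jmaa15} (in the version absorbing the $\ell_+^1$ perturbation $e_n$ of the iteration) with $\alpha$ as an averaging constant of $T$ yields, for $i\in\{1,\ldots,m\}$, $(\Id-T_i)((T_{i-1}\circ\cdots\circ T_1)x_n)-(\Id-T_i)((T_{i-1}\circ\cdots\circ T_1)\overline{x}_m)\to 0$; the telescoping $x_{i,n}-x_{i-1,n}\to(T_i\circ\cdots\circ T_1)\overline{x}_m-(T_{i-1}\circ\cdots\circ T_1)\overline{x}_m$, corrected by the vanishing terms $x_{i,n}-(T_i\circ\cdots\circ T_1)x_n$, then gives $x_{i,n}\weakly\overline{x}_i$.

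The main obstacle I anticipate is the bootstrapping: Proposition~\ref{p:compTinoTi} only controls the perturbation by $\tau_{m,n}\|x_n\|+\theta_{m,n}$, so the error sequence is \emph{not} a priori summable and boundedness of $(x_n)_{n\in\NN}$ must be secured first (from the multiplicative quasi-Fej\'er estimate above); once that is done, the standard inexact fixed-point theory applies essentially verbatim. A secondary, routine, technical point is pinning down the perturbed versions of the Krasnosel'ski\u\i--Mann theorem and of \cite[Theorem~3.5(ii)]{Jmaa15} with $\ell_+^1$ error terms, which is needed for case~\ref{t:3g}.
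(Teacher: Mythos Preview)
Your proposal is correct and follows the paper's overall strategy of recasting \eqref{e:algo4} as an inexact Krasnosel'ski\u\i--Mann iteration for the $\alpha$-averaged limit operator $T$, with errors controlled via Proposition~\ref{p:compTinoTi}, and then handling the individual sequences exactly as in Theorem~\ref{t:1}.

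There is one genuine methodological difference worth noting. To establish boundedness of $(x_n)_{n\in\NN}$---which, as you correctly identify, is the crux---the paper introduces an auxiliary \emph{unperturbed} sequence $(y_n)_{n\in\NN}$ generated by iterating $T$ with the same $(\lambda_n)_{n\in\NN}$ and $y_0=x_0$, proves a recursion $\|x_{n+1}-y_{n+1}\|\le(1+\tau_{m,n}/\alpha)\|x_n-y_n\|+(\tau_{m,n}\|y_n\|+\theta_{m,n})/\alpha$, and then imports the boundedness of $(y_n)_{n\in\NN}$ from Theorem~\ref{t:1}. Your route is more direct: you derive the multiplicative quasi-Fej\'er estimate $\|x_{n+1}-z\|\le(1+\lambda_n\tau_{m,n})\|x_n-z\|+\lambda_n(\tau_{m,n}\|z\|+\theta_{m,n})$ straight against a fixed point $z\in F$, which yields boundedness without any auxiliary sequence and without invoking Theorem~\ref{t:1}. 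This is cleaner and avoids one layer of indirection; the paper's detour through $(y_n)_{n\in\NN}$ buys nothing extra here. For case~\ref{t:3g}, both arguments ultimately rely on the error-tolerant form of \cite[Theorem~3.5(ii)]{Jmaa15}; the paper makes the layer-wise errors $e_{i,n}$ explicit (your sketch leaves this implicit), but the substance is the same.
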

\begin{proof}
Let $(y_n)_{n\in\NN}$ be the sequence defined by $y_0=x_0$
and 
\begin{equation}
\label{e:algo5}
\begin{array}{l}
\text{for}\;n=0,1,\ldots\\
\left\lfloor
\begin{array}{ll}
y_{1,n}&\!\!\!=T_1y_n\\
y_{2,n}&\!\!\!=T_2y_{1,n}\\
       &\hskip -1mm\vdots\\
y_{m,n}&\!\!\!=T_my_{m-1,n}\\
y_{n+1}&\!\!\!=y_n+\lambda_n(y_{m,n}-y_n).
\end{array}
\right.\\[2mm]
\end{array}
\end{equation}
For every $n\in\NN$, set $S_n=T_{m,n}\circ\cdots\circ T_{1,n}$. We
derive from \eqref{e:algo4} and \eqref{e:algo5} that
\begin{align}
\label{e:xnxnb1}
(\forall n\in\NN)\quad
\|x_{n+1}-y_{n+1}\| & =
\|x_n+\lambda_n(S_n x_n-x_n)-y_n-\lambda_n(T y_n-y_n)\|\nonumber\\
&\leq\lambda_n\|S_n x_n-T x_n\|+\|x_n-y_n+\lambda_n(T x_n-T
y_n-x_n+y_n)\|.
\end{align}
At the same time, by Proposition~\ref{p:compTinoTi},
there exist $(\tau_{m,n})_{n\in\NN}\in\ell_{+}^{1}$ and
$(\theta_{m,n})_{n\in\NN}\in\ell_{+}^{1}$ such that
\begin{align} 
(\forall n\in\NN)\quad\|S_n x_n-T x_n\|
&\leq\tau_{m,n}\|x_n\|+\theta_{m,n}\label{e:fz72}\\
&\leq\tau_{m,n}(\|x_n-y_n\|+\|y_n\|)+\theta_{m,n}.
\label{e:xnxnb2}
\end{align}
On the other hand, by Theorem~\ref{t:2},
Assumption~\ref{a:main}\ref{a:mainii}, and \ref{t:3ii}, 
$T$ is $\alpha$-averaged.
Hence, there exists a nonexpansive operator $Q\colon\HH\to\HH$ such
that $T=(1-\alpha)\Id+\alpha Q$. Since 
\ref{t:3iii} implies
that $(\lambda_n)_{n\in\NN}$ lies in $\left]0,1/\alpha\right]$, 
we deduce that 
\begin{align}
\label{e:xnxnb3}
(\forall n\in\NN)\quad
\|x_n-y_n+\lambda_n(T x_n-T y_n-x_n+y_n)\|
&=\|(1-\alpha\lambda_n)(x_n-y_n)+ \alpha\lambda_n
(Qx_n-Qy_n)\|\nonumber\\
&\leq (1-\alpha\lambda_n) \|x_n-y_n\|+\alpha\lambda_n
\|Qx_n-Qy_n\|\nonumber\\
&\leq\|x_n-y_n\|.
\end{align}
Altogether \eqref{e:xnxnb1}, \eqref{e:xnxnb2}, and
\eqref{e:xnxnb3} yield
\begin{equation}
(\forall n\in\NN)\quad\|x_{n+1}-y_{n+1}\|\leq
\Big(1+\frac{\tau_{m,n}}{\alpha}\Big)\|x_n-y_n\|+
\frac{1}{\alpha}\big(\tau_{m,n}\|y_n\|+\theta_{m,n}\big).
\end{equation}
However, Theorem~\ref{t:1} guarantees that 
$\delta=\sup_{n\in\NN}\|y_n\|<\pinf$ and therefore that
\begin{equation}
(\forall n\in\NN)\quad
\|x_{n+1}-y_{n+1}\|\leq\Big(1+\frac{\tau_{m,n}}{\alpha}\Big)
\|x_n-y_n\|+\frac{1}{\alpha}\big(\tau_{m,n}\delta+
\theta_{m,n}\big).
\end{equation}
Since $(\tau_{m,n})_{n\in\NN}$ and
$(\tau_{m,n}\delta+
\theta_{m,n})_{n\in\NN}$
are in $\ell_{+}^{1}$, there exists $\nu\in\RP$ such that
$\|x_n-y_n\|\to\nu$ \cite[Lemma~5.31]{Livre1}. Consequently, 
$\delta'=\sup_{n\in\NN}\|x_n\|\leq\delta+
\sup_{n\in\NN}\|x_n-y_n\|<\pinf$.
Now, set
\begin{equation}
(\forall n\in\NN)\quad e_n=\frac{1}{\alpha}(S_n x_n-T x_n).
\end{equation}
Then it follows from \eqref{e:fz72} that
\begin{equation}
\sum_{n\in\NN}\|e_n\|
\leq\frac{1}{\alpha}\sum_{n\in\NN}
\big(\tau_{m,n}\|x_n\|+\theta_{m,n}\big)
\leq\frac{\delta'}{\alpha}\sum_{n\in\NN}\tau_{m,n}+
\frac{1}{\alpha}\sum_{n\in\NN}\theta_{m,n}<\pinf.
\end{equation}
In view of \eqref{e:algo4}, we have
\begin{equation}
\label{e:fz68}
(\forall n\in\NN)\quad x_{n+1}=x_n+\mu_n(Qx_n+e_n-x_n),
\quad\text{where}\quad\mu_n=\alpha\lambda_n\in\rzeroun.
\end{equation}

\ref{t:3ci}: The weak convergence of $(x_n)_{n\in\NN}$ to a point
$\overline{x}_m\in\Fix Q=F$ follows from \eqref{e:fz68} and 
\cite[Theorem~5.33(iv)]{Livre1} by arguing as in
the proof of \cite[Theorem~5.14(i)]{Livre1}.

\ref{t:3cii}: It follows from \eqref{e:fz68} and
\cite[Proposition~5.34(iii)]{Livre1} that $(x_n)_{n\in\NN}$
converges weakly to a point $\overline{x}_m\in\Fix Q=F$.

In \ref{t:3ci}--\ref{t:3cii} above, 
Proposition~\ref{p:10}\ref{p:10iv} ensures that
$(T_1\overline{x}_m,(T_2\circ T_1)\overline{x}_m,\ldots, 
(T_{m-1}\circ\cdots\circ T_1)\overline{x}_m,\overline{x}_m)$ 
solves \eqref{e:vi1}.

\ref{t:3d}--\ref{t:3f}: If one of these assumptions holds, by 
proceeding as in the proof of 
Theorem~\ref{t:1}\ref{t:1d}--\ref{t:1f}, we obtain that,
for every $i\in\{1,\ldots,m-1\}$,
$(T_i\circ\cdots\circ T_1)x_n\weakly\overline{x}_i=
(T_i\circ\cdots\circ T_1)\overline{x}_m$ and that, furthermore,
$(\overline{x}_1,\ldots,\overline{x}_m)$ solves \eqref{e:vi1}.
However, Proposition~\ref{p:compTinoTi} asserts that, for every
$i\in\{1,\ldots,m-1\}$, there exist
$(\tau_{i,n})_{n\in\NN}\in\ell_{+}^{1}$ and
$(\theta_{i,n})_{n\in\NN}\in\ell_{+}^{1}$ such that, for every
$n\in\NN$, 
\begin{equation}
\|x_{i,n}-(T_i\circ\cdots\circ T_1)x_n\| 
=\|(T_{i,n}\circ\cdots\circ T_{1,n})
x_n-(T_i\circ\cdots\circ T_1) x_n\| 
\leq\tau_{i,n}\|x_n\|+\theta_{i,n}.
\end{equation}
Since $(x_n)_{n\in\NN}$ is bounded,
$x_{i,n}-(T_i\circ\cdots\circ T_1)x_n \to 0$
and therefore $x_{i,n}\weakly\overline{x}_i$.

\ref{t:3g}: 
For every $i\in\{1,\ldots,m\}$, set
\begin{equation}
\label{e:defein}
(\forall n\in\NN)\quad
e_{i,n}=(T_{i,n}\circ T_{i-1,n}\circ\cdots\circ T_{1,n}) x_n-
(T_i\circ T_{i-1,n}\circ\cdots\circ T_{1,n}) x_n,
\end{equation}
and let $(\chi_{i,n})_{n\in\NN}$ and $(\zeta_{i,n})_{n\in\NN}$ 
be defined as in \eqref{e:defchizeta}.
By Propositions~\ref{p:boundcompTib}, \ref{p:difTin}, and 
\ref{p:compTinoTi}, we have
\begin{equation}
(\forall n\in\NN)\quad
\|e_{1,n}\|\leq\chi_{1,n}\|x_n\|+\zeta_{1,n}
\end{equation}
and
\begin{align}
&\hskip -14mm
(\forall i\in\{2,\ldots,m\})
(\exi(\tau_{i-1,n})_{n\in\NN}\in\ell_{+}^{1})
(\exi(\theta_{i-1,n})_{n\in\NN}\in\ell_{+}^{1})
(\forall n\in\NN)\quad
\nonumber\\ 
\|e_{i,n}\|&\leq\chi_{i,n}\|(T_{i-1,n}\circ\cdots\circ 
T_{1,n})x_n\|
+\zeta_{i,n}\nonumber\\
&\leq\chi_{i,n}\big(\|(T_{i-1,n}\circ\cdots\circ T_{1,n}) x_n-
(T_{i-1}\circ\cdots\circ T_1)x_n\|
+\|(T_{i-1}\circ\cdots\circ T_1)x_n\|\big)+\zeta_{i,n}\nonumber\\
&\leq\chi_{i,n}\Bigg(\tau_{i-1,n}\|x_n\|+\theta_{i-1,n}
+\|x_n\|\prod_{k=1}^{i-1}\|W_k\|
+\sum_{j=1}^{i-1}\|b_j\|\Bigg(\prod_{k=j+1}^{i-1}
\|W_k\|\Bigg)\Bigg)+\zeta_{i,n}.
\end{align}
Thus, since $(x_n)_{n\in\NN}$ is bounded, 
\begin{equation}
\label{e:kj1979}
(\forall i\in\{1,\ldots,m\})\quad
(\|e_{i,n}\|)_{n\in\NN}\in\ell_+^1.
\end{equation}
In addition, by \eqref{e:defein} and \eqref{e:algo4},
\begin{equation}
(\forall n\in\NN)\quad 
x_{n+1}=x_n+\lambda_n\big(T_m(T_{m-1}(\cdots
T_2(T_1x_n+e_{1,n})+e_{2,n}\cdots)
+e_{m-1,n})+e_{m,n}-x_n\big).
\end{equation}
Thus, since Proposition~\ref{p:9}\ref{p:9iii} and Theorem~\ref{t:2} 
imply that the operators $(T_i)_{1\leq i\leq m}$ are averaged, the
proof can be completed as that of Theorem~\ref{t:1}\ref{t:1g} since 
\cite[Theorem~3.5(ii)]{Jmaa15} asserts that \eqref{e:kj2018} 
remains valid under \eqref{e:kj1979}. 
\end{proof}

\end{document}